\title{Geometric approach to nonvariational \\ singular elliptic equations}
\author{Dami\~ao Ara\'ujo  \quad $\&$  \quad Eduardo V. Teixeira }
\date{\small $\star$ Final Version $\star$ \medskip \\ $C^2$ regularity assumption on $F$-harmonic functions removed}
\newlength{\hchng}
\newlength{\vchng}
\def \dist {\mathrm{dist}}
\def \redbdry {\partial_\mathrm{red}}
\def \suchthat {\ \big | \ }
\def \Leb {\mathscr{L}^N}
\def \tr {\text{Tr}}
\newtheorem{theorem}{Theorem}[section]
\newtheorem{lemma}[theorem]{Lemma}
\newtheorem{proposition}[theorem]{Proposition}
\newtheorem{corollary}[theorem]{Corollary}
\theoremstyle{definition}
\newtheorem{definition}[theorem]{Definition}
\theoremstyle{remark}
\newtheorem{remark}[theorem]{Remark}
\numberwithin{equation}{section}
\newcommand{\intav}[1]{\mathchoice {\mathop{\vrule width 6pt height 3 pt depth  -2.5pt
\kern -8pt \intop}\nolimits_{\kern -6pt#1}} {\mathop{\vrule width
5pt height 3  pt depth -2.6pt \kern -6pt \intop}\nolimits_{#1}}
{\mathop{\vrule width 5pt height 3 pt depth -2.6pt \kern -6pt
\intop}\nolimits_{#1}} {\mathop{\vrule width 5pt height 3 pt depth
-2.6pt \kern -6pt \intop}\nolimits_{#1}}}
\begin{document}
\maketitle

\begin{abstract}
\vspace{1pc}
In this work we develop a systematic geometric approach to study fully nonlinear elliptic equations with singular absorption terms as well as their related free boundary problems. The magnitude of the singularity is measured by a negative parameter $(\gamma -1)$, for $0 < \gamma  < 1$, which reflects on  lack of smoothness for an existing solution along the singular interface between its positive and zero phases. We establish existence as well sharp regularity properties of solutions. We further prove that minimal solutions are non-degenerate and obtain fine geometric-measure properties of the free boundary $\mathfrak{F} = \partial \{u > 0 \}$. In particular we show sharp Hausdorff estimates which imply local finiteness of the perimeter of  the region $\{u > 0 \}$ and $\mathcal{H}^{n-1}$ a.e. weak differentiability property of $\mathfrak{F}$. 
\end{abstract}

%\tableofcontents

\section{Introduction}

The aim of this present work is to study fine qualitative properties of nonvariational singular elliptic equations of the form
\begin{equation} \label{Damiao_Eduardo Eq01}
\left\{
\begin{array}{rllll}
    F(D^2u)  &\sim& u^{-\theta} \cdot \chi_{\{u > 0 \}} & \mbox{in} & \Omega \\
    u&=& f & \mbox{on} & \partial\Omega,  \\
\end{array}
\right.
\end{equation}
where $\Omega \subset \mathbb{R}^N$ is a smooth bounded domain, $\theta = 1 - \gamma$, for $0<\gamma<1$,  $f$ is a positive, $C^2$ boundary datum and the  governing operator $F$ is assumed to be uniform elliptic, i.e., $\left( D_{(i,j)}F\right )_{1\le i,j \le N}$ is a positive definite matrix. The study of singular equations as \eqref{Damiao_Eduardo Eq01} is motivated by applications in a number of problems in engineering sciences. In fact the free boundary problem
\begin{equation} \label{Damiao_Eduardo Eq02}
\left\{
\begin{array}{rllll}
    F(D^2u)  & = & \gamma u^{\gamma -1} & \mbox{in} & \{u > 0\} \\
    u = |\nabla u |&=& 0 & \mbox{on} & \partial  \{u > 0\}  \\
\end{array}
\right.
\end{equation} 
is used, for example, to model fluids passing through  a porous body $\Omega$. For instance, $u$ could represent the density of a gas, or else the density of certain chemical specie, in reaction with a porous catalyst pellet, $\Omega$. 

\par

The variational theory, $F(M) = \text{Tr}(M)$, for the free boundary problem \eqref{Damiao_Eduardo Eq02} is fairly well understood, nowadays. It appears as the Euler-Lagrange equation in the minimization of non-differentiable functionals:
\begin{equation}\label{functintro}
    \int \dfrac{1}{2} |\nabla u(X)|^2 + u(X)^\gamma dX \longrightarrow \mathrm{min.} 
\end{equation}
See, for instance \cite{P01, P02, ap, Weiss}. Notice that such a problem is quite different from the one treated in the classical paper \cite{crt}. The latter has been recently studied in the fully nonlinear setting in \cite{FQS}. 

\par

The case $\gamma = 1$ in \eqref{functintro} represents the obstacle problem, \cite{C1}; the case $\gamma = 0$ relates to the cavitation problem, \cite{AC}. Fully nonlinear version of the obstacle problem has been considered in \cite{LS}. Nonvariational cavitation problem has been recently studied in \cite{RT}. The delicate intermediary case, $0 < \gamma < 1$, addressed in this present work brings major novelty adversities as the equation satisfied within the positive set $\{u > 0 \}$ is nonhomogeneous and blows-up along the \textit{a priori} unknown quenching interface $\mathfrak{F} = \partial \{u > 0 \} \cap \Omega$ -  the so called free boundary of the problem. The lack of variational or energy approaches too implies significant difficulties in the problem and new, nonvariational solutions have to be established. In fact, since the free boundary problem considered in this paper has nonvariational character, one cannot use the powerful measure-distributional language to setup weak version of the problem. Instead we shall employ a perturbation scheme and will obtain uniform estimates with respect to the approximating parameter $\varepsilon$. A solution to the fully nonlinear free boundary problem \eqref{Damiao_Eduardo Eq02} will therefore be obtained as the limit of appropriate approximating configurations.
 
The first main problem to be addressed concerns the optimal regularity for solutions to Equation \eqref{Damiao_Eduardo Eq01}. Optimal estimates for heterogeneous equations, $\text{L}u = f(X,u)$ is in general a quite delicate issue. For the singular setting studied in this present work, optimal estimates are even more involved as they can be understood as invariant (tangential) equations for their own scaling. We show in Section \ref{Section sharp reg} of the present work that solutions  are locally of class $C^{1,\frac{\gamma}{2-\gamma}}$ at the free boundary. This result was only known in the variational setting, for minimizers of Euler-Lagrange functional, see \cite{P01, P02, ap} and \cite{GG, GG01}. 

The next principal result devilered in this article states that minimal solutions, i.e., solutions obtained from Perron's type method do grow precisely as $\text{dist}(X, \mathfrak{F})^{1+\frac{\gamma}{2-\gamma}}$, which corresponds to the maximum growth rate allowed. Such a result implies a quite restrictive geometry for the free quenching interface $\mathfrak{F}$.  As consequence of our sharp gradient estimate, Theorem \ref{regularitythm} and optimal growth rate, Theorem \ref{lemma Growth}, a minimal solution is trapped between the graph of two multiples of  $\text{dist}(X, \mathfrak{F})^{1+\frac{\gamma}{2-\gamma}}$, i.e., 
$$
	\uline{c} \cdot \text{dist}(X, \mathfrak{F})^{1+\frac{\gamma}{2-\gamma}} \le u(X) \le \overline{C} \cdot \text{dist}(X,\mathfrak{F})^{1+\frac{\gamma}{2-\gamma}}, \quad X \in \{u>0 \}.
$$ 
By means of geometric considerations, in Section \ref{Section Harnack} we establish a \textit{clean} Harnack inequality for solutions to \eqref{Damiao_Eduardo Eq01} within free boundary tangential balls, $B \subset \{u> 0 \}$, $B$ tangent to $\mathfrak{F}$.  In Section \ref{Section Hausdorff}, under an extra asymptotic structural assumption on the governing operator $F$, we establish Hausdorff estimates of the free boundary. In particular we show $\chi_{\{u > 0 \} \cap \Omega'} \in \text{BV}(\Omega)$, that is, $\{u> 0\}$ is locally a set of finite perimeter. We further show that the reduced free boundary has $\mathcal{H}^{n-1}$ total measure. The last two Sections close up the project by obtaining a solution to the fully nonlinear free boundary problem \eqref{Damiao_Eduardo Eq02} with the desired analytic and geometric properties.

\section{Mathematical set-up} \label{Section Math set-up}

Throughout this paper $\Omega$ will be a fixed Lipschitz bounded domain in $\mathbb{R}^N$, $f\colon \partial \Omega \to \mathbb{R}_{+}$ is a continuous boundary datum and $0 < \gamma < 1$ is a fixed real number. We shall denote by  $\mbox{Sym}(N)$ the space of all real $N\times N$ symmetric matrices and $F$ will be a uniformly elliptic fully nonlinear operator; that is, we shall assume  that there exist two constants $0<\lambda\leq \Lambda$ such that
\begin{equation}\label{uniform ellip}
	F(\mathcal{M}+\mathcal{N})\leq F(\mathcal{M})+\Lambda\|\mathcal{N}^{+}\|-\lambda\|\mathcal{N}^{-}\|, \quad  \forall \mathcal{M},\mathcal{N} \in \mbox{Sym}(N). 
\end{equation}

The ultimate goal of this paper is to study existence and fine qualitative properties of solutions to the singular equation
\begin{equation} \label{eq math setup}
	F(D^2u) = \gamma u^{\gamma -1} \cdot \chi_{\{u > 0 \}}.
\end{equation}
From the equation itself, one notices that the Hessian of an existing solution blows-up along the free boundary $\mathfrak{F} = \partial \{u>0\} \cap \Omega$; therefore, solutions cannot be of class $C^2$. In the fully nonlinear setting, the problem of optimal regularity for solutions to Equation \eqref{eq math setup} is a rather   delicate issue and it will be addressed in Section \ref{Section sharp reg}. Part of the subtleness of this problem comes from the intrinsic complexity of  the regularity theory for viscosity solutions to uniform elliptic equations. We recall that it is well known that solutions to homogeneous equation
\begin{equation} \label{eq hom math setup}
	F(D^2u) = 0,
\end{equation}
has a priori $C^{1,\mu}$ bounds for some $ \mu > 0$ that depends only on $N, \lambda$ and $\Lambda$. Under concavity or convexity assumption on $F$, a Theorem due to Evans and Krylov, states that solutions are $C^{2,\alpha}$. Nevertheless, Nadirashvili and Vladut have recently shown that given any $0<\eta < 1$ it is possible to build up a uniformly elliptic operator $F$, whose solutions to the homogeneous equation \eqref{eq hom math setup} are not $C^{1,\eta}$, see \cite{NV}, Theorem 1.1.  

%Therefore, in order to access the optimal regularity estimate available for the free boundary problem \eqref{eq math setup}, we shall assume  that $F$ has a priori $C^{2,\tau}$ estimates for %some small $0< \tau < 1$.  More precisely, we shall assume there exists $C_F > 0$ such that
%\begin{equation}\label{C2 est}
%	F(D^2h) = 0 \text{ in } B_1 \quad \text{implies } \quad \|h\|_{C^{2,{\tau}}(B_{1/2})} \le C_F \cdot \|h\|_{L^\infty(B_1)}.
%\end{equation}
%Hypothesis \eqref{C2 est} will be enforced hereafter in the paper, though all but Theorem \ref{regularitythm} do not depend on such condition (see also Remark \ref{rmk1}).

Let us turn our attention to the singularly perturbed strategy we shall use in order to grapple with the lack of variational approaches available. In this paper we suggest the following  singular perturbation scheme to appropriately approach the   free boundary problem \eqref{Damiao_Eduardo Eq02}:
\begin{equation} \label{Ee}  \tag{$E_\varepsilon$}
\left\{
\begin{array}{rllll}
    F(D^2u)  &=& \beta_{\varepsilon}(u), & \mbox{in} & \Omega \\
    u&=&f & \mbox{on} & \partial\Omega.  \\
\end{array}
\right.
\end{equation}
The singular perturbation term $\beta_\varepsilon$ is build up as follows: initially select your favorite  function $\rho\in C^{\infty}_0[0,1]$ and set
\begin{equation}\label{alpha}
	\alpha:=1+\dfrac{\gamma}{2-\gamma}.
\end{equation}
Throughout the whole paper, $\alpha$  will always be the fixed value stated in \eqref{alpha}. In the sequel, define
\begin{equation} \label{def B}
	 B_{\varepsilon}(t)=\int_{0}^{\frac{t-\varepsilon^{\alpha}\sigma_0}{\varepsilon^{\alpha}}}\rho(s)ds,
\end{equation}
where $0<\sigma_0<\frac{1}{2}$ is an arbitrary  technical choice. Notice that $B_\varepsilon$ is a smooth approximation of $\chi_{(0,\infty)}$. Finally, we set
\begin{equation} \label{beta}
	\beta_{\varepsilon}(t)=\gamma t^{\gamma-1}B_{\varepsilon}(t).
\end{equation}
Such a construction is carefully carried out as to preserve the natural scaling of the desired equation \eqref{eq math setup}.

%\subsection{Notation}

We finish this Section by listing the main notations adopted throughout the article:
\begin{itemize}
    \item The dimension of the Euclidean space the problem is modeled in will be denoted by $N \ge 2$. $\Omega$ will be a fixed bounded domain in $\mathbb{R}^N$. For a domain $\mathcal{O} \subset \mathbb{R}^N$, $\partial \mathcal{O}$ will represent the boundary of the domain $\mathcal{O}$. $\chi_{S}$ will stand for the characteristic function of the set $S$.
    \item The $N$-dimensional Lebesgue measure of a set $A\subset \mathbb{R}^N$ will be denoted by $\Leb(A)$. $\mathcal{H}^{n-1}$ will stand for the $(n-1)$-Hausdorff measure.
    \item $\left\langle\cdot ,\cdot \right\rangle$ will be the standard scalar product in $\mathbb{R}^N$. For a vector $\xi=(\xi_1,\cdots,\xi_N) \in \mathbb{R}^N$, its Euclidean norm will be denoted by $|\xi| := \sqrt{\langle \xi, \xi \rangle}$. The tensor product $\xi\otimes \psi$ denotes the matrix whose entries are given by $\xi_i\psi_j$ for $1\leq i,j\leq N$. 
    \item $B_{r}(p)$ will be the open ball centered at $p$ with radius $r$. Furthermore, we shall denote $kB = kB_{r}(p):= B_{kr}(p)$, for any $k>0$.
	\item $\text{Spect}(\mathcal{M})$ denotes the set of eigenvalues of the matrix $\mathcal{M}$.
	\item Fixed the ellipticity constants $0<\lambda \le \Lambda$, $\mathscr{P}^{-}(M)$ and $\mathscr{P}^{+}(M)$ denote the Pucci extremal operators:
	$$
		\begin{array}{lll}
			\mathscr{P}^{-}(M) &:=& \inf \left \{ \text{Tr} (A\cdot M) \suchthat \lambda \text{Id} \le A \le \Lambda \text{Id}  \right \}, \\
			\mathscr{P}^{+}(M) &:=& \sup \left \{ \text{Tr} (A\cdot M) \suchthat \lambda \text{Id} \le A \le \Lambda \text{Id}  \right \}.
		\end{array}
	$$
    \item Constants $C, C_1, C_2, \cdots >0$ and $c, c_0, c_1, c_2, \cdots > 0$ that depend only on dimension, $\gamma$ and ellipticity constants $\lambda, ~ \Lambda$ will be call universal. Any additional dependence will be emphasized.
\end{itemize}

\section{Existence of minimal solutions} \label{Section existence}

In this section we comment on the existence of a viscosity solution to  equation \eqref{Ee}. More importantly, we shall establish herein a stable process to select special solutions to \eqref{Ee}. As we will show in Section \ref{Section Nondeg}, the family of minimal solutions turns out to satisfy the  desired appropriate geometric features. Such properties will allow us to establish Hausdorff estimates of the free boundary in Section \ref{Section Hausdorff}.

Notice that because of the lack of monotonicity of equation \eqref{Ee} with respect to the variable $u$, classical Perron's method cannot be directly employed. The next theorem proved in \cite{RT}, is an adaptation of Perron's method, which is by now fairly well understood.

\begin{theorem}\label{existthm}
Let $g$ be a bounded, Lipschitz function defined in the real line $\mathbb{R}$. Suppose $F$ uniformly elliptic and that the equation $F (D^2 u) = g(u)$ admits a Lipschitz viscosity subsolution $u_\star$ and a Lipschitz viscosity supersolution $u^\star$ such that $u_\star=u^\star=f\in C(\partial\Omega)$. Define the set of functions,
\[
S:= \left \{ w\in C(\overline{\Omega}); \quad u_\star\leq w \leq u^\star \quad \mbox{and}\; w \; \mbox{supersolution of}\; F(D^2u)=g(u) \right \}.
\]
Then,
\[
v(x):=\inf_{w\in S} w(x)
\]
is a continuous viscosity solution of $F(D^2u)=g(u)$ and $v=f$ continuously on $\partial\Omega$.
\end{theorem}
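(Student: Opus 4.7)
The plan is to adapt the classical Perron method to the present nonmonotone setting; the Lipschitz hypothesis on $g$ will play the role that monotonicity of the reaction term usually plays. Since $u^\star \in S$, the family $S$ is nonempty and $v := \inf_{w \in S} w$ is well-defined with $u_\star \leq v \leq u^\star$. As a pointwise infimum of continuous functions, $v$ is upper semicontinuous, and the sandwich together with $u_\star = u^\star = f$ on $\partial \Omega$ forces $v = f$ continuously on the boundary. The heart of the proof is to show that the lower semicontinuous envelope $v_*$ is a viscosity supersolution and that $v$ (which coincides with its upper semicontinuous envelope) is a viscosity subsolution; a standard comparison principle for uniformly elliptic equations with Lipschitz right-hand side will then collapse the two envelopes and deliver continuity of $v$ together with the viscosity solution property.

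The supersolution property of $v_*$ is a stability-type statement for infima of supersolutions. Given $\varphi \in C^2$ touching $v_*$ from below at $x_0$, I would select $w_k \in S$ and $x_k \to x_0$ realizing $w_k(x_k) \to v_*(x_0)$, produce via a standard quadratic perturbation argument a translated test function $\varphi_k$ touching $w_k$ from below at some $y_k \to x_0$, and pass to the limit in $F(D^2 \varphi_k(y_k)) \leq g(w_k(y_k))$ using continuity of $F$ and $g$.

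The subsolution property of $v$ is the main obstacle, since the usual Perron bump argument relies on monotonicity of the reaction in $u$. Arguing by contradiction, assume there exist $x_0 \in \Omega$ and a $C^2$ function $\varphi$ touching $v$ strictly from above at $x_0$ with $F(D^2 \varphi(x_0)) > g(v(x_0))$. Note first that we must have $v(x_0) > u_\star(x_0)$, for otherwise $\varphi$ would also touch the subsolution $u_\star$ strictly from above at $x_0$, forcing $F(D^2 \varphi(x_0)) \leq g(u_\star(x_0)) = g(v(x_0))$, a contradiction. The Lipschitz hypothesis on $g$ now enters decisively: since $|g(\varphi - \delta) - g(\varphi)| \leq L\delta$, continuity of $F$, $g$ and $\varphi$ allows us to find $r, \delta_0 > 0$ such that
\[
F\bigl(D^2 (\varphi - \delta)(x)\bigr) > g\bigl((\varphi - \delta)(x)\bigr) \qquad \text{for every } x \in B_r(x_0), \ \delta \in (0, \delta_0].
\]
In other words, the downward shift $\varphi - \delta$ remains a strict classical supersolution; the absence of monotonicity in $u$ is absorbed quantitatively by the Lipschitz modulus $L$.

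To close the contradiction, pick $r' < r$ so that $\varphi - v \geq c > 0$ on $\partial B_{r'}(x_0)$, which is possible because the touching is strict, and select $\delta \in (0, \delta_0)$ with $\delta < c$ and small enough that $\varphi - \delta \geq u_\star$ throughout $B_{r'}(x_0)$. Define
\[
\tilde v(x) := \begin{cases} \min\{v(x),\, \varphi(x) - \delta\}, & x \in B_{r'}(x_0),\\ v(x), & x \notin B_{r'}(x_0).\end{cases}
\]
The choice of $\delta$ guarantees that $\tilde v$ is continuous and satisfies $u_\star \leq \tilde v \leq u^\star$; the standard gluing principle for viscosity supersolutions (the pointwise minimum of two supersolutions is a supersolution) ensures $\tilde v \in S$. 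However, $\tilde v(x_0) \leq \varphi(x_0) - \delta = v(x_0) - \delta < v(x_0)$, contradicting the definition of $v$ as the infimum of $S$. This contradiction forces $v$ to be a subsolution, closing the argument.
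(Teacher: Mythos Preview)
The paper does not prove this theorem; it cites \cite{RT}, remarking only that the result is an adaptation of Perron's method. So I assess your proposal on its own merits.

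There is a genuine gap at the comparison step. You invoke ``a standard comparison principle for uniformly elliptic equations with Lipschitz right-hand side'' to collapse $v^*$ and $v_*$, but no such global principle exists: the equation $F(D^2u)=g(u)$ with $g$ merely Lipschitz is not proper, and comparison can fail on large domains (already $u''=-u$ on $(0,\pi)$ admits a one-parameter family of solutions with zero boundary data). This failure is precisely why the theorem is billed as an \emph{adaptation} of Perron's method rather than a direct application. A related circularity infects your bump: you set $\tilde v=\min\{v,\varphi-\delta\}$ and claim $\tilde v\in S$ via ``min of two supersolutions,'' but membership in $S$ requires continuity and the supersolution property for $v$ itself, neither of which has been established at that point --- you only know $v$ is upper semicontinuous and that $v_*$ (not $v$) is a supersolution. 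The bump can be repaired by replacing $v$ with a finite minimum of elements of $S$ chosen (by compactness of $\partial B_{r'}(x_0)$) to sit below $\varphi-\delta$ on $\partial B_{r'}(x_0)$; but this still leaves the problem of showing $v=v_*$, and for that the nonexistent global comparison is the only tool you offer.

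One correct route --- and the one typically used in this setting --- is monotone iteration through the auxiliary \emph{proper} problem
\[
F(D^2u_{k+1}) - Lu_{k+1} = g(u_k) - Lu_k \quad \text{in } \Omega,\qquad u_{k+1}=f \quad \text{on } \partial\Omega,
\]
with $L$ the Lipschitz constant of $g$ and $u_0=u_\star$. Properness of the left-hand side and the fact that $t\mapsto g(t)-Lt$ is nonincreasing give, inductively, $u_\star\le u_k\le u_{k+1}\le w$ for every $w\in S$; uniform $C^\alpha$ estimates for the auxiliary equation pass to the increasing limit $u_\infty$, which is then a continuous solution of the original equation lying in $S$ and below every $w\in S$, whence $u_\infty=v$. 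Here the Lipschitz hypothesis enters through properness of the auxiliary operator, not through any global comparison for the original equation. (A minor additional point: your inequalities are reversed relative to the paper's convention. From the ellipticity condition and the construction of $u_\star,u^\star$ immediately following the theorem, $F$ is increasing in the Hessian, so supersolutions satisfy $F(D^2w)\le g(w)$, failure of the subsolution test reads $F(D^2\varphi(x_0))<g(v(x_0))$, and the displayed inequality for $\varphi-\delta$ should point the other way.)
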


Existence of minimal solution to Equation  \eqref{Ee} follows by choosing
$u_\star=u_\star(\varepsilon)$ and $u^ \star=u^ \star(\varepsilon)$ solutions to the following boundary value problems
\[
\begin{array}{lcr}
\left\{
\begin{array}{rllll}
    F(D^2u_\star)  &=& \zeta, & \mbox{in} & \Omega \\
    u_\star&=&f & \mbox{on} & \partial\Omega,  \\
\end{array}
\right.

&\mbox{and}&

\left\{
\begin{array}{rllll}
    F(D^2u^\star)  &=& 0, & \mbox{in} & \Omega \\
    u^\star&=&f & \mbox{on} & \partial\Omega,  \\
\end{array}
\right.
\\
\end{array}
\]
where 
$$
	\zeta:=\sup\beta_\varepsilon \sim \varepsilon^{\gamma -1}.
$$ 
The existence the functions $u_\star$ and $u^\star$ is consequence of standard Perron's method. By construction $u_\star$ is viscosity subsolution of $(E_\varepsilon)$ and $u^\star$ is a viscosity supersolution of \eqref{Ee}. Note that $u^\star,u_\star \in C^{0,1}(\Omega) \cap C(\overline{\Omega})$. Thus a direct application of Theorem $\ref{existthm}$ yields the following existence result:

\begin{theorem}[Existence of minimal solutions]\label{minexist}
Let $\Omega\in\mathbb{R}^n$ be a Lipschitz domain and $f \in C(\partial\Omega)$ be a nonnegative boundary datum. Then, for each $\varepsilon>0$ fixed, equation \eqref{Ee} has a nonnegative minimal viscosity solution $u_{\varepsilon}\in C(\bar{\Omega})$.
\end{theorem}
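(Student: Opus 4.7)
My plan is to reduce directly to Theorem \ref{existthm} with $g = \beta_\varepsilon$. The strategy breaks into three ingredients: verifying that $\beta_\varepsilon$ fits the boundedness/Lipschitz hypothesis of Theorem \ref{existthm}; producing suitable Lipschitz barriers $u_\star$ and $u^\star$; and finally confirming nonnegativity of the resulting infimum.

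First I would check that $\beta_\varepsilon$ meets the regularity hypothesis of Theorem \ref{existthm}. Since $\rho \in C_0^\infty[0,1]$, the primitive $B_\varepsilon$ in \eqref{def B} vanishes identically on $(-\infty, \varepsilon^\alpha \sigma_0]$ and becomes a constant for $t \ge \varepsilon^\alpha(\sigma_0 + 1)$, so the singular factor $t^{\gamma-1}$ in \eqref{beta} is never multiplied by anything nonzero near $t = 0$. Extending $\beta_\varepsilon$ by zero on the negative half-line therefore produces a globally bounded Lipschitz function on $\mathbb{R}$ with $\sup \beta_\varepsilon =: \zeta \lesssim \varepsilon^{\gamma-1}$.

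Next I would construct the barriers using classical Perron theory for fully nonlinear Dirichlet problems on Lipschitz domains. Combined with exterior-cone boundary barriers, this produces viscosity solutions $u_\star, u^\star \in C^{0,1}_{\mathrm{loc}}(\Omega) \cap C(\overline{\Omega})$ solving, respectively,
$$
F(D^2 u_\star) = \zeta, \qquad F(D^2 u^\star) = 0, \qquad u_\star = u^\star = f \text{ on } \partial\Omega,
$$
with interior Lipschitz regularity coming from standard gradient bounds for uniformly elliptic equations with bounded right-hand side. Because $0 \le \beta_\varepsilon \le \zeta$ pointwise, the required sub/super relations $F(D^2 u_\star) = \zeta \ge \beta_\varepsilon(u_\star)$ and $F(D^2 u^\star) = 0 \le \beta_\varepsilon(u^\star)$ are immediate. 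The ordering $u_\star \le u^\star$ follows from comparison applied to the homogeneous equation $F(D^2 v) = 0$, reading $u_\star$ as a subsolution of it (because $\zeta \ge 0$) and $u^\star$ as a solution, with identical boundary data.

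Invoking Theorem \ref{existthm} with these barriers and $g = \beta_\varepsilon$ then directly produces a continuous viscosity solution $u_\varepsilon \in C(\overline{\Omega})$ of \eqref{Ee} defined as
$$
u_\varepsilon(x) = \inf\{w(x) : u_\star \le w \le u^\star \text{ and } w \text{ is a supersolution of } \eqref{Ee}\},
$$
with $u_\varepsilon = f$ on $\partial\Omega$ and minimality inherent to the infimum construction. For nonnegativity, I would argue by contradiction: if the open set $\{u_\varepsilon < 0\}$ were nonempty, it would be compactly contained in $\Omega$ (because $f \ge 0$ on $\partial\Omega$ forces $u_\varepsilon|_{\partial\Omega} \ge 0$), and on it $u_\varepsilon$ would satisfy $F(D^2 u_\varepsilon) = \beta_\varepsilon(u_\varepsilon) = 0$ with nonnegative boundary values, which the strong minimum principle forbids. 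I expect no significant conceptual obstacle: the result is a direct specialization of Theorem \ref{existthm}, with all delicate analysis absorbed into the classical Perron and barrier machinery.
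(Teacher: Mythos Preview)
Your proposal is correct and follows essentially the same route as the paper: construct the barriers $u_\star$ and $u^\star$ as viscosity solutions to $F(D^2 u_\star)=\zeta:=\sup\beta_\varepsilon$ and $F(D^2 u^\star)=0$ with boundary datum $f$, then invoke Theorem~\ref{existthm} with $g=\beta_\varepsilon$. You supply more detail than the paper does---the explicit Lipschitz/boundedness check for $\beta_\varepsilon$, the comparison $u_\star\le u^\star$, and the minimum-principle argument for nonnegativity---but these are natural elaborations of the same argument rather than a different approach.
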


As previously mentioned, more importantly than assuring existence of a viscosity solution to \eqref{Ee}, Theorem $\ref{minexist}$ provides a particular choice of solutions to such an equation. In comparison with the variational theory, this choice is a replacement for
the selection of minimizers of the Euler-Lagrange functional (see for instance \cite{T} for further details). Therefore, unless otherwise stated, whenever we mention viscosity solution to \eqref{Ee}, we mean the minimal solution provided by Theorem $\ref{minexist}$.

%%%%%%%%%%%%%%%%
\section{Sharp regularity estimates} \label{Section sharp reg}

The first main result we prove in this paper is the optimal regularity estimate, uniform in $\varepsilon$, available for solutions to \eqref{Ee}. We will show that $u_\varepsilon$ is locally a $C^{1,\beta}$ function and we shall further determine the optimal $\beta>0$ in terms of the degree of singularity $\gamma$. This key information has only been known for variational solutions, \cite{P01, GG, GG01} and the proofs make decisive use of energy considerations. In principle it is not even clear that one should expect the same regularity theory for nonvariational problems. 

Thus, we start off this Section by rather informal, heuristic considerations as to guide us through the genuine results to be established later on. Let us analyze the limiting free boundary problem \eqref{Damiao_Eduardo Eq02}. Suppose $0$ is a free boundary point and, say, $-\text{e}_n$ is the unit outward normal pointing towards the quenching phase $\{u = 0\}$. If  $u$ is $C^{1,\beta}$ at $0$, then, in a small neighborhood, say, $B_\rho \cap \{u > 0 \}$, $\rho \ll 1$, $u$ behaves like $\sim X_n^{1+\beta}$. Therefore, the singular potential of the equation in \eqref{Damiao_Eduardo Eq02} is like $\sim X_n^{(1+\beta) \cdot (1-\gamma)}$.  In view of the regularity theory for heterogeneous fully nonlinear equations $F(D^2u) = f(X)$, established in \cite{C2} and \cite{T2}, we obtain the following implication
$$
	X_n^{(1+\beta) \cdot (1-\gamma)} \in L^\theta_\text{weak} \quad \text{implies} \quad u \in C^{1, 1-\frac{1}{\theta}}.
$$
The reasoning above gives the following system of algebraic equations
$$
	\left \{ 
		\begin{array}{rll}
			\theta(1+\beta)(\gamma - 1) &=& -1 \\
			\beta &=& 1-\frac{1}{\theta}.
		\end{array}
	\right.
$$
Solving for $\beta$, revels, $\beta = \frac{\gamma}{2-\gamma}$, which agrees with the optimal regularity estimate established for the variational theory.  

\par

This Section is devoted to establish local $C^{1,\frac{\gamma}{2-\gamma}}$ regularity estimates for  solutions $u_\varepsilon$ to Equation \eqref{Ee}, uniform in $\varepsilon$ at free boundary points. %Recall that we are working under the natural assumption that $F$ has \textit{a priori} $C^{2,\tau}$ estimates. 
In fact we shall obtain a universal growth control on $u_\varepsilon$ near the free boundary. The desired regularity along the free interface will then follow.
\par

Hereafter, let us fix a point $X_0 \in \Omega$ and for simplicity take $X_0 = 0$. Our analysis will be based on the auxiliary function $  v_\varepsilon$, defined by
\begin{equation}\label{thm4.1 eq00}
	v_\varepsilon(X) := u_\varepsilon^{\frac{2-\gamma}{2}}(X).
\end{equation}
For the sake of notation convenience, let us omit the subscript $\varepsilon$ in $v_\varepsilon$ and in $u_\varepsilon$, writing simply $v$ and $u$ to denote these functions. 
Formally one computes
\begin{eqnarray}	
		\nabla v(X) &=& \left (1-\frac{\gamma}{2}\right ) u^{\frac{-\gamma}{2}}(X) \cdot \nabla u(X) \label{thm4.1 eq01}\\
		D^2 v(X) &=&   -\frac{\gamma}{2} \left (1-\frac{\gamma}{2}\right ) u^{-1 - \frac{\gamma}{2}}(X) \cdot \nabla u(X) \otimes \nabla u(X) +  \left (1-\frac{\gamma}{2}\right ) u^{\frac{-\gamma}{2}}(X) \cdot D^2 u(X) \label{thm4.1 eq02}
\end{eqnarray}
Plugging \eqref{thm4.1 eq01} into \eqref{thm4.1 eq02} yields
\begin{equation}\label{thm4.1 eq03}
	D^2v(X) = -\left ( \frac{\gamma}{2-\gamma} \right ) v^{-1}(X) \cdot \nabla v(X) \otimes \nabla v(X) + \left (1-\frac{\gamma}{2}\right ) u^{1-\gamma}(X) \cdot D^2 u(X) \cdot v^{-1}(X).
\end{equation}
From the PDE satisfied by $u$, we have
\begin{eqnarray}
	u^{1-\gamma}(X)  v^{-1}(X) \cdot F(D^2u) &=& \gamma \left ( u^{1-\gamma} \cdot \beta_\varepsilon(u) \right ) \cdot v^{-1}(X) \label{thm4.1 eq04} \\
	&= & \gamma B_\varepsilon(v^\alpha) \cdot v^{-1}(X) \label{thm4.1 eq05} 
\end{eqnarray}
Thus, ellipticity and \eqref{thm4.1 eq03} yield
\begin{equation}\label{thm4.1 eq7}
	F \left ( D^2v(X) +  \frac{\gamma}{2-\gamma}  v^{-1}(X) \cdot \nabla v(X) \otimes \nabla v(X) \right ) = f(X) v^{-1}(X),
\end{equation}
for a bounded function $f(X)$. Though the above computation has been conducted formally, it is standard to justify Equation \eqref{thm4.1 eq7} using the language of viscosity solutions. 

\par

Our first result towards optimal regularity establishes equicontinuity for functions satisfying \eqref{thm4.1 eq7}, which implies the same conclusion to the family of functions $u_\varepsilon$. The proof is an adaptation of the Ishii-Lions method \cite{IL}, see also \cite{B},  \cite{BCI} and \cite{IS}.

%%%%%%%%%%%
%%%%%%%%%%%

\begin{proposition}[$C^0$-compactness] \label{equicontinuity} Solutions to \eqref{thm4.1 eq7} are universally locally uniform continuous, that is, there exists a universal modulus of continuity, $\varrho$, such that $|v(X_1) - v(X_2)| \le C_{\Omega'} \varrho(|X_1-X_2))$, for $X_1, X_2 \in \Omega' \Subset \Omega$.
\end{proposition}

\begin{proof}
	Fixed $X_0 \in \Omega$, let us denote by $r = \dist(X_0, \partial \Omega)$. We will show that for any $\delta>0$ given, there exists $L_\delta > 0$ such that
	\begin{equation}\label{propeq0}
		\Phi:= \sup\limits_{(X,Y)\in \bar{\Omega} \times \bar{\Omega}} \left \{v(X) - v(Y) - L_\delta \omega (|X-Y|) - \dfrac{8\|v\|_\infty}{r^2} (|X-X_0|^2 + |Y-X_0|^2 )\right \} \le \delta,
	\end{equation}
	where $\omega(t) = t - \frac{1}{10\sqrt{r}} t^{3/2}$, for $t\le r$, $\omega(t) =  \frac{9}{10}r$ for $t\ge r$.  For that, suppose $\Phi > \delta$ and let $(\bar{X}, \bar{Y})$ be a maximum point. It readily follows that
	\begin{equation}\label{propeq1}
		  \dfrac{8\|v\|_\infty}{r^2} (|\bar{X}-X_0|^2 + |\bar{Y}-X_0|^2 ) < 2 \|v\|_\infty,
	\end{equation}
 	thus, $\bar{X}$ and $\bar{Y}$ are interior points and $|\bar{X}- \bar{Y}| < r$. Clearly $\bar{X} \not = \bar{Y}$, otherwise $\Phi = 0 < \delta$. Define in the sequel the vectors
	\begin{eqnarray} 
		\xi_X := L_\delta \omega'(|\bar{X} -\bar{Y}|)\eta +  \dfrac{16\|v\|_\infty}{r^2} (\bar{X} - X_0)	\label{propeq2}\\
		\xi_Y := L_\delta \omega'(|\bar{X} -\bar{Y}|)\eta -  \dfrac{16\|v\|_\infty}{r^2} (\bar{Y} - X_0)	\label{propeq3},
	\end{eqnarray}
	where $\eta := \frac{\bar{X} -\bar{Y}}{|\bar{X} -\bar{Y}|}$. From Jensen-Ishii's approximation Lemma, see \cite{UserG} and also \cite{BCI}, for $\iota > 0$ small enough, it is possible to find matrices $M_X$ and $M_Y$ with 
	\begin{eqnarray}
		(\xi_X, M_X)& \in &{J}^{-}(v, \bar{X}), \label{propeq2.1}\\
		(\xi_Y, M_Y) &  \in & {J}^{+}(v, \bar{Y}),  \label{propeq2.2}
	\end{eqnarray} 
where ${J}^{-}$ and ${J}^{+}$ denote the subjet and superjet respectively (see \cite{UserG} for definition), verifying the following matrix inequality
	\begin{equation}\label{propeq4}
		\begin{pmatrix}
M_X & 0 \\
0 & -M_Y 
\end{pmatrix}
\leq 
\begin{pmatrix}
Z  & -Z \\
-Z & Z
\end{pmatrix} + (\dfrac{16\|v\|_\infty}{r^2}+\iota) \text{Id}_{2n\times 2n} , 	
\end{equation}	
where 
\begin{equation}\label{propeq5}
		Z = \omega''(|\bar{X} -\bar{Y}|) \dfrac{ (\bar{X} -\bar{Y}) \otimes (\bar{X} -\bar{Y}) }{|\bar{X} -\bar{Y}|^2} + \dfrac{\omega'(|\bar{X} -\bar{Y}|)}{|\bar{X} -\bar{Y}|} \left \{ \text{Id}_{n\times n}  - \dfrac{ (\bar{X} -\bar{Y}) \otimes (\bar{X} -\bar{Y}) }{|\bar{X} -\bar{Y}|^2}  \right \}.
\end{equation}
Applying inequality \eqref{propeq4} to vectors of the form $(\xi, \xi)$, we conclude 
\begin{equation}\label{propeq6}
		\text{Spect}(M_Y - M_X) \in (-\dfrac{32\|v\|_\infty}{r^2} - \iota, +\infty).
\end{equation}
However, if we apply to the special vector $(\eta, -\eta)$, we conclude 
\begin{equation}\label{propeq7}
		\text{Spect}(M_Y - M_X) \cap (\frac{c}{\sqrt{r}} L_\delta -\dfrac{32\|v\|_\infty}{r^2} - \iota  +\infty) \not = \emptyset,
\end{equation}
for a universal number $c>0$, easily computed if one desires. 

\par

In the sequel we use Equation \eqref{thm4.1 eq7}, together with \eqref{propeq2.1} and \eqref{propeq2.2} to write up the following ponitwise inequalities 
\begin{eqnarray}
	F \left ( M_X +  \frac{\gamma}{2-\gamma}  v^{-1}(\bar{X}) \cdot \xi_X \otimes \xi_X \right ) \ge  f(\bar{X}) v^{-1}(\bar{X}) \label{propeq8}\\
	F \left ( M_Y +  \frac{\gamma}{2-\gamma}  v^{-1}(\bar{Y}) \cdot \xi_Y \otimes \xi_Y \right ) \le  f(\bar{Y}) v^{-1}(\bar{Y}) \label{propeq9}.
\end{eqnarray}
Subtracting \eqref{propeq8} from \eqref{propeq9} and using ellipticity, we find
\begin{equation}\label{propeq10}
	\mathscr{P}^{-} \left ( [M_Y - M_X]  +  \frac{\gamma}{2-\gamma}  [v^{-1}(\bar{Y}) \cdot \xi_Y \otimes \xi_Y - v^{-1}(\bar{X}) \cdot \xi_X \otimes \xi_X] \right ) \le f(\bar{Y}) v^{-1}(\bar{Y}) -  f(\bar{X}) v^{-1}(\bar{X}),
\end{equation}
where $\mathscr{P}^{-}$ is the extremal Pucci operator with ellipticity $(\lambda, \Lambda)$. If we label $\Xi :=  [M_Y - M_X]  +  \frac{\gamma}{2-\gamma}  [v^{-1}(\bar{Y}) \cdot \xi_Y \otimes \xi_Y - v^{-1}(\bar{X}) \cdot \xi_X \otimes \xi_X] $, from the definition of $\mathscr{P}^{-}$, there exists a $(\lambda, \Lambda)$-elliptic matrix, $\lambda \text{Id} \le a_{ij} \le \Lambda \text{Id}$, satisfying 
\begin{equation}\label{propeq11}
	\text{Tr}(a_{ij} \Xi_{ij})  - 1 \le \mathscr{P}^{-}(\Xi).
\end{equation}
From \eqref{propeq6}, \eqref{propeq7} and ellipticity, we estimate
\begin{equation}\label{propeq12}
	\text{Tr} \left (a_{ij} (M_Y - M_X)_{ij}  \right )  \ge \lambda \frac{c}{\sqrt{r}} L_\delta - n\Lambda\left ( \frac{32\|v\|_\infty}{r^2} - \iota \right ).
\end{equation} 
Now we compute
\begin{equation}\label{propeq13}
	\text{Tr} \left (a_{ij} ( \xi_Y \otimes \xi_Y )_{ij} \right )  = \left ( \frac{17}{20}L_\delta \right )^2 \cdot \text{Tr} \left (a_{ij} \eta_i\eta_j \right ) -  \frac{256 \|v\|^2_\infty}{r^4} \cdot \text{Tr} \left (a_{ij}(\bar{Y}-X_0)_i (\bar{Y}-X_0)_j \right ),
\end{equation} 
and likewise
\begin{equation}\label{propeq14}
	\text{Tr} \left (a_{ij} ( \xi_X \otimes \xi_X )_{ij} \right )  = \left ( \frac{17}{20}L_\delta \right )^2 \cdot \text{Tr} \left (a_{ij} \eta_i\eta_j \right ) +  \frac{256 \|v\|^2_\infty}{r^4} \cdot \text{Tr} \left (a_{ij}(\bar{X}-X_0)_i (\bar{X}-X_0)_j \right ). 
\end{equation} 
From ellipticity follows the estimates 
\begin{eqnarray}\label{propeq15}
	\sigma:= \text{Tr} \left (a_{ij} \eta_i\eta_j \right ) &\ge& \lambda, \label{propeq15} \\
	\max \left \{ \text{Tr} \left (a_{ij}(\bar{Y}-X_0)_i (\bar{Y}-X_0)_j \right ), \text{Tr} \left ( a_{ij}(\bar{X}-X_0)_i (\bar{X}-X_0)_j \right ) \right \} & \le &\Lambda r^2. \label{propeq16}
\end{eqnarray} 
Also, since $v(\bar{X}) - v(\bar{Y}) > \delta$, we readily have
\begin{eqnarray} \label{propeq16.1}
	v^{-1}(\bar{X}) & < &  \dfrac{1}{\delta}  \label{propeq16.1} \\ 
	v^{-1}(\bar{Y}) & > & \frac{\delta}{\|v\|_\infty^2} + v^{-1}(\bar{X}).  \label{propeq16.2} 
\end{eqnarray}
Combining \eqref{propeq10},  \eqref{propeq11},  \eqref{propeq12},  \eqref{propeq13} and \eqref{propeq14}, if $L_\delta \gg 1$, depending only on universal numbers, and $r$, we obtain, 
\begin{equation} \label{propeq17}
	\begin{array}{lll}
		0 & < & \lambda \frac{c}{\sqrt{r}} L_\delta - n\Lambda\left ( \frac{32\|v\|_\infty}{r^2} - \iota \right ) -1  \\
		& \le &  \left [ \frac{\gamma}{2-\gamma}   \left (\frac{17}{20}L_\delta \right )^2 \cdot \sigma +  \frac{\gamma}{2-\gamma} \cdot   \frac{256\Lambda  \|v\|^2_\infty}{r^2} - f(\bar{X})  \right ] v^{-1}(\bar{X}) \smallskip \\
		&- & \left [ \frac{\gamma}{2-\gamma}   \left (\frac{17}{20}L_\delta \right )^2 \cdot \sigma -  \frac{\gamma}{2-\gamma} \cdot   \frac{256\Lambda  \|v\|^2_\infty}{r^2} - f(\bar{Y})  \right ] v^{-1}(\bar{Y}).
	\end{array}
\end{equation}
Now, we select $L_\delta$ even bigger, depending further on $\gamma$, $\|v\|_\infty$, $\|f\|_\infty$ so that
\begin{equation} \label{propeq18}
	\dfrac{\frac{\gamma}{2-\gamma}   \left (\frac{17}{20}L_\delta \right )^2 \cdot \sigma +  \frac{\gamma}{2-\gamma} \cdot   \frac{256\Lambda  \|v\|^2_\infty}{r^2} - f(\bar{X})}{ \frac{\gamma}{2-\gamma}   \left (\frac{17}{20}L_\delta \right )^2 \cdot \sigma -  \frac{\gamma}{2-\gamma} \cdot   \frac{256\Lambda  \|v\|^2_\infty}{r^2} - f(\bar{Y})} < \left ( 1+ \dfrac{\delta^2}{\|v\|_\infty^2} \right ).
\end{equation}
Estimates \eqref{propeq16.2}, \eqref{propeq17} and \eqref{propeq18} give
\begin{equation} \label{propeq18.1}
	\dfrac{\delta}{\|v\|^2_\infty} + v^{-1}(\bar{X}) < v^{-1}(\bar{Y}) < \left ( 1+ \dfrac{\delta^2}{\|v\|_\infty^2} \right ) v^{-1}(\bar{X}).
\end{equation}
Finally, confronting \eqref{propeq18.1} and \eqref{propeq16.1} we end up with
\begin{equation} \label{propeq18.2}
	\dfrac{\delta}{\|v\|^2_\infty}  <  \dfrac{\delta^2}{\|v\|_\infty^2}  v^{-1}(\bar{X}) < \dfrac{\delta}{\|v\|^2_\infty},
\end{equation}
which is a contradiction. Thus, if $L_\delta \gg 1$, indeed $\Phi < \delta$ and the proof of Proposition \ref{equicontinuity} follows. 
\end{proof}

%%%%%%%%%
%%%%%%%%%
%%%%%%%%%

An inference from the proof above reveals that in fact $L_\delta \sim \delta^{-1}$, as to attain \eqref{propeq18}. Thus, Proposition \ref{equicontinuity} gives local $C^{0,\frac{1}{2}}$ continuity for $v$. We further comment that the proof could also be performed using $\omega(t) = t^\theta$, with $0< \theta < 1$, as $|X-Y|^\theta$ too is concave in the radial direction (information used to obtain \eqref{propeq7}). We shall use this observation later. 

%%%%%%%%%
%%%%%%%%%
%%%%%%%%%

\begin{theorem}[Uniform optimal regularity]\label{regularitythm}
Given a subset $\Omega'\Subset \Omega$, there exists a constant $C$ depending on, $\|f\|_{\infty}$, $\gamma$, $\Omega'$, dimension, ellipticity, but independent of $\varepsilon$, such that, any family of viscosity solutions $\{u_{\varepsilon}\}$ of equation \eqref{Ee} satisfies,
\begin{equation}\label{gradest1}
 \sup\limits_{B_r(X)} u_\varepsilon \le C \left ( r^{\frac{2}{2-\gamma}} + u_\varepsilon(X) \right ), \quad \forall X \in \Omega'.
\end{equation}
\end{theorem}
%%%%
\begin{proof} 
Suppose, for the sake of contradiction, the thesis of Theorem \ref{regularitythm} fails to hold. Combining discrete iterative techniques and a continuous methods, see \cite{CKS}, Lemma 3.3 and also \cite{ST} for similar reasoning, for each $k>1$, it is possible to find $0<r_k = \text{o}(1)$, $X_k \in \Omega'$, $\varepsilon_k > 0 $  such that the following two inequalities hold
\begin{eqnarray}
	s_k:= \sup\limits_{B_{r_k}(X_k)} u_{\varepsilon_k}  &>& k \left (r_k^{\alpha} +  u_{\varepsilon_k} (X_k) \right ) \label{regeq03} \\
	\sup\limits_{B_{r_k}(X_k)} \left [ u_{\varepsilon_k}- u_{\varepsilon_k}(X_k) \right ] &\ge& 2^{-\alpha k}  \sup\limits_{B_{2^k r_k}(X_k)} \left [ u_{\varepsilon_k}- u_{\varepsilon_k}(X_k) \right ]. \label{regeq03.1}
\end{eqnarray}
The normalized function $\varphi_k \colon B_1 \to \mathbb{R}$ given by
\begin{equation}\label{regeq05}
	\varphi_k(Y) := \dfrac{u_{\varepsilon_k}(X_k + r_kY)}{s_{k}},
\end{equation} 
satisfies
\begin{eqnarray}
	 0 \le &\varphi_k(Y)&  \le C |Y|^\alpha  \ , \label{regeq06} \\
	& \varphi_k(0)& = \text{o}(1), \label{regeq07} \\ 
	 &\sup\limits_{B_{1}} \varphi_k &= 1.  \label{regeq08}
\end{eqnarray}
In addition, the following equation is satisfied in the viscosity sense
\begin{equation}\label{regeq09}
	{F}_k(D^2 \varphi_k(Y)) = \left ( \dfrac{r^{\alpha}_k}{s_k} \right )^{\frac{2}{\alpha}} \beta_{\varepsilon_k}(\varphi_k),
\end{equation} 
where 
\begin{equation}\label{regeq09.1}
	{F}_k(\mathcal{M}) := \dfrac{r_k^2}{s_{k}} \cdot  F \left (\dfrac{s_k}{{r_k^2}} \mathcal{M} \right ),
\end{equation} 
which is a $(\lambda, \Lambda)$-elliptic operator. Thus, from Proposition \ref{equicontinuity}, up to a subsequence, $\varphi_k$ converges locally uniformly to an entire function $\varphi_0 \colon \mathbb{R}^N \to \mathbb{R}$. From hypothesis of contradiction \eqref{regeq03},
\begin{equation}\label{regeq11}
	\left ( \dfrac{r^{\alpha}_k}{s_{k}}\right )^{\frac{2}{\alpha}}  = \text{o}(1).
\end{equation} 
Passing to another subsequence, if necessary, $F_k$ converges locally uniformly to a limiting \textit{recession} operator $\tilde{F}$, which is $(\lambda, \Lambda)$-elliptic and homogeneous of degree one for nonnegative scalars. Passing the limit as $k \to \infty$ in \eqref{regeq09} yields
\begin{equation}\label{regeq12}
	\varphi_0^{1-\gamma} \cdot \tilde{F}(D^2 \varphi_0(Y)) = 0.
\end{equation} 
Notice, in view of \eqref{regeq07} and \eqref{regeq08}, we have
\begin{equation}\label{regeq10}
	\varphi_0(0) = 0, \quad \sup\limits_{B_{1}} \varphi_0 = 1.
\end{equation} 
We now revisit the proof of Proposition \ref{equicontinuity}. Defining $\psi := \varphi_0^{1/\alpha}$, we find 
\begin{equation}\label{regeq13}
	\tilde{F}( D^2 \psi + c_\gamma \psi^{-1} \nabla \psi \otimes \nabla \psi) = 0.
\end{equation} 
By running the same reasonings of the proof of Proposition \ref{equicontinuity}, for $\psi$, with $\omega = t^\theta$, $0<\theta<1$, $\delta =0$, $f=0$ and with no localization term, gives $C^{0,\theta}$ estimates for $\psi$, for any $\theta <1$. In fact,  for $L \gg 1$, depending only on ellipticity, estimate \eqref{propeq17} becomes
\begin{equation}\label{regeq14}
	0 < cL^2 \cdot \left ( \psi^{-1}(\bar{X}) - \psi^{-1}(\bar{Y}) \right ) < 0,
\end{equation} 
since the contradiction assumption in the reasoning of the proof of Proposition \ref{equicontinuity} implies $\psi(\bar{X}) > \psi(\bar{Y})$. We now choose $\theta_0 $ so that
\begin{equation}\label{regeq14.1}
	\frac{1}{\alpha} < \theta_0 < 1.
\end{equation}
A final contradiction is then obtained when we confront \eqref{regeq10} with the $C^{0,\theta_0}$ regularity for $\psi$. Indeed, select a point $Z$ in $\{\varphi_0 > 0\}$ and $Z_0 \in \{\varphi_0 = 0\}$, satisfying $\dist (Z, \{\varphi_0 = 0\}) =   |Z-Z_0| < \frac{1}{2}.$ 
It follows from Hopf maximum principle that 
\begin{equation}\label{regeq14.2}
	0< \liminf\limits_{h\to 0} \frac{\varphi(h e + Z_0)}{h},
\end{equation}
where $e$ is the inward normal vector to the ball $B_{|Z-Z_0|}(Z)$ at $Z_0$. On the other hand, we have
\begin{equation}\label{regeq14.2}
	\begin{array}{lll}
		\dfrac{\varphi_0(he+Z_0)}{|h|} &=& \sqrt[\theta_0]{\dfrac{\varphi_0^{\frac{1}{\alpha}}(he+Z_0)}{|h|^{\theta_0}}} \cdot \varphi_0^{1-\frac{1}{\alpha\theta_0}}(he + Z_0) \\
		&\le& C \cdot \varphi_0^{\delta_0}(he + Z_0) \\
		&\to & 0,
	\end{array}
\end{equation}
as $h \to 0$. 
This concludes the proof of Theorem \ref{regularitythm}.

\end{proof}

\section{Nondegeneracy of minimal solutions} \label{Section Nondeg}

In the previous Section we have shown that solutions to Equation \eqref{Ee} are locally of class $C^{1, \frac{\gamma}{2-\gamma}}$. In particular such an estimate provides an upper bound on how fast $u_\varepsilon$ growths away from, say, the level surface $\{u_\varepsilon \sim \varepsilon^\alpha\}$, for $\alpha$ as in \eqref{alpha}. That is, 
$$
	u_\varepsilon(Z) \lesssim \left [ \text{dist}(Z, \{u_\varepsilon \sim \varepsilon^\alpha\})\right ]^\alpha.
$$ 
The main result we shall prove in this Section states that minimal solutions do growth precisely as $\text{dist}(X_0, \{u_\varepsilon \sim \varepsilon^\alpha\})^\alpha$, see Corollary \ref{Growth col} for the precise statement. In fact we shall establish a stronger nondegeneracy property of minimal solutions, which also has fundamental importance in our blow-up analysis.

To simplify the statement of the results, we introduce some definitions and notations. Hereafter we shall use systematically we following notations:
\[
\begin{array}{rcl}
\{u_\varepsilon>\kappa\} & := & \{x\in\Omega \mid u_\varepsilon(x)>\kappa\}, \\
\{\tau>u_\varepsilon>\lambda\} & := & \{x\in\Omega \mid \tau>u_\varepsilon(x)>\lambda\}, \\
d_\varepsilon(X) & := & \dist(X,\partial\{u_\varepsilon > \varepsilon^\alpha \}), \\
%B_\varepsilon(X) & := & B_{d_\varepsilon(X)}(X).\\ 
\end{array}
\]

The nondegeneracy feature of minimal solutions is based on the construction of appropriate viscosity supersolution whose value within an inter disk is much smaller than its value on the boundary of an outer disk. 
  
\begin{proposition} \label{prop barrier}
Assume, with no loss of generality that $0\in\Omega$. Given $0<\eta$, there exists a radially symmetric function $\theta\in C^{1,1}(\Omega)$ and universal small constants $0<c_2<1$ and $0<c_1<1$ such that
\begin{enumerate}
\item $\theta\equiv 2\sigma_0$ in $B_{c_1\eta}$
\item $\theta\geq c_2\eta^{1+\frac{\gamma}{2-\gamma}}$ in $\Omega \setminus B_\eta$
\item $\theta$ is satisfies $F(D^2\theta(X)) \le \beta(\theta(X)),$ pointwise in $\Omega$, where $\beta=\beta_1$, as in \eqref{beta}.
\end{enumerate}
\end{proposition}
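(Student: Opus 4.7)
The plan is to construct $\theta$ as a radial function $\theta(X) = \Psi(|X|)$ in three layers: a constant value on an inner ball, a smooth interpolation across an annular band, and a constant value outside. Explicitly, I would set $\Psi(r) = 2\sigma_0$ for $r \leq c_1\eta$,
\[
\Psi(r) = 2\sigma_0 + A \cdot p\!\left(\frac{r - c_1\eta}{(1-c_1)\eta}\right) \quad \text{for } c_1\eta \leq r \leq \eta,
\]
and $\Psi(r) \equiv \Psi(\eta)$ for $r \geq \eta$, where $p \in C^\infty([0,1])$ is a fixed profile satisfying $p(0) = p'(0) = p'(1) = 0$ and $p(1) = 1$ (for instance, $p(s) = 3s^2 - 2s^3$). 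The amplitude will be chosen as $A := \max\{0,\, c_2 \eta^\alpha - 2\sigma_0\}$, which forces $\Psi(\eta) \geq c_2 \eta^\alpha$ and thus verifies condition (2). The two vanishing-derivative conditions on $p$ guarantee $C^1$ gluing at both endpoints of the annulus, and boundedness of $p''$ upgrades this to $C^{1,1}$.

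On the two constant regions $\{r \leq c_1\eta\}$ and $\{r \geq \eta\}$, the Hessian of $\theta$ vanishes, so $F(D^2\theta) = 0$; since we are free to pick the mollifier $\rho$ strictly positive on $(0,1)$, one has $B_1(t) > 0$ for all $t > \sigma_0$, and in particular $\beta(\Psi) \geq 0$ everywhere in those regions. Hence condition (3) is immediate there, and the entire content of the argument is confined to the transition annulus.

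The heart of the verification lies on $c_1\eta \leq r \leq \eta$, where the Hessian has eigenvalue $\Psi''(r)$ in the radial direction and $\Psi'(r)/r$ with multiplicity $N-1$ in the tangential directions. A direct computation from the ansatz gives the bounds $|\Psi'(r)| \lesssim A/\eta$ and $|\Psi''(r)| \lesssim A/\eta^2$, so by uniform ellipticity
\[
F(D^2\theta) \leq \mathcal{M}^+(D^2\theta) \leq K(\Lambda, N, c_1)\, \frac{A}{\eta^2}.
\]
In the nontrivial regime $A = c_2\eta^\alpha - 2\sigma_0 \leq c_2\eta^\alpha$ this becomes $F(D^2\theta) \leq K c_2\, \eta^{\alpha - 2}$. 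On the other hand, using $\Psi \leq 2c_2\eta^\alpha$ and the monotonicity of $t \mapsto t^{\gamma-1}$,
\[
\beta(\Psi) \geq \gamma B_1(2\sigma_0)\, \Psi^{\gamma - 1} \geq c'(\sigma_0,\rho,\gamma)\, c_2^{\gamma - 1}\, \eta^{\alpha(\gamma - 1)}.
\]
Condition (3) then reduces to the universal smallness requirement $K c_2^{2 - \gamma} \leq c'$, which can be arranged by choosing $c_2$ small from the outset. When instead $A = 0$, one has $\theta \equiv 2\sigma_0$ on all of $\Omega$ and the inequality is trivial.

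The main obstacle I anticipate is the apparent mismatch of scales: the inner value $2\sigma_0$ is fixed while the outer prescription $c_2\eta^\alpha$ varies with $\eta$, so a priori it is unclear that any universal $c_2$ can reconcile the two regimes simultaneously. The resolution is the algebraic identity $\alpha(\gamma - 1) = \alpha - 2$, an immediate consequence of $\alpha = 2/(2-\gamma)$, which makes the ellipticity cost $KA/\eta^2$ and the singular source $\beta(\Psi)$ share the same power of $\eta$. This matching is precisely what allows a single universal choice of $c_2$ to handle all $\eta > 0$ uniformly.
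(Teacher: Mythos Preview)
Your proof is correct, but it follows a genuinely different construction from the paper's. The paper builds $\theta$ in three layers too, but with a \emph{quadratic} transition $a_0(|X|-c_1\eta)^2 + 2\sigma_0$ on the annulus and, crucially, a \emph{growing} profile $A|X|^\alpha + B$ for $|X|\ge \eta$ rather than a constant. This forces the paper to verify the supersolution inequality separately on the unbounded outer zone, which in turn drives additional smallness constraints on the amplitude $A$; the universality of $c_2$ then has to be extracted from those constraints. Your choice of a constant plateau outside $B_\eta$ collapses that outer verification to $F(0)=0\le \beta(\Psi(\eta))$, and the smooth cubic profile keeps all Hessian eigenvalues at order $A/\eta^2$ in the annulus. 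The decisive observation is the same in both arguments---the scaling identity $\alpha(\gamma-1)=\alpha-2$ matching the ellipticity cost against the singular source---but in your setup it leads directly to the single universal inequality $Kc_2^{2-\gamma}\le c'$, making the independence of $c_2$ from $\eta$ completely transparent. For the downstream application (Theorem~\ref{lemma SN}) only the values of $\theta$ on $B_{c_1\eta}$ and on $\partial B_\eta$ are actually used, so nothing is lost by your flat exterior.

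Two minor remarks. First, your appeal to ``picking $\rho$ strictly positive on $(0,1)$'' is unnecessary and slightly at odds with the paper's convention that $\rho$ is fixed in advance: what you really need is $B_1(2\sigma_0)=\int_0^{\sigma_0}\rho>0$, which the paper's own proof also uses implicitly, and which follows once $\sigma_0$ is chosen compatibly with $\mathrm{supp}\,\rho$. Second, you should state explicitly that $p$ is monotone (true for $3s^2-2s^3$), so that $2\sigma_0\le \Psi\le 2\sigma_0+A$ on the annulus; this is what justifies both $B_1(\Psi)\ge B_1(2\sigma_0)$ and the upper bound $\Psi\le c_2\eta^\alpha$ you invoke.
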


\begin{proof}
Initially define
\[
\theta(X)=
\left\{
\begin{array}{lll}
2\sigma_0 & \mbox{for} & 0\leq|X|\leq c_1\eta; \\
a_0(|X|-c_1\eta)^2+2\sigma_0 & \mbox{for} & c_1\eta\leq |X|\leq \eta; \\
A|X|^{\alpha}+B & \mbox{for} & |X|\geq\eta. 
\end{array}
\right.
\]
where the constants $a_0,A,B$ e $c_1$ will be chosen later.  Our first goal is to enforce that such a function is indeed $C^{1,1}$. For this, we have to set along $|X|=\eta$, 
\begin{equation}
	a_0(1-c_1)^2\eta^2+2\sigma_0=\theta(X)=A\eta^{\alpha}+B \label{eq1} 
\end{equation} 
thus, easily we obtain
$$
	a_0=\frac{1}{(1-c_1)^2}\left[A\eta^{\alpha-2}+\eta^{-2}(B-2\sigma_0)\right].
$$ 
Moreover, differentiating  $\theta$ and matching its gradient along  $|X|=\eta$, we obtain
\begin{equation} \label{eq1.1} 
	2a_0(1-c_1)X_i=A\alpha\eta^{\alpha-2}X_i.
\end{equation} 
Combining \eqref{eq1} and \eqref{eq1.1} we find
\begin{equation}
\dfrac{A\alpha\eta^ {\alpha-2}}{2(1-c_1)}=\dfrac{1}{(1-c_1)^2}\left[A\eta^{\alpha-2}+\eta^{-2}\left(B-2\sigma_0\right)\right]. \label{eq2}
\end{equation}
In the sequel, take 
$$
	c_1:=\dfrac{\gamma}{2}\in (0,1),
$$
which implies the relation $\alpha=\frac{1}{1-c_1}$, where, as always, $\alpha$ is the value set in \eqref{alpha} . Finally we set
\[
	B=2\sigma_0-\dfrac{A}{2}\eta^\alpha,
\]
as to \eqref{eq2} to be  satisfied. Summing up the construction so far, we have built up 
\[
\theta(X)=
\left\{
\begin{array}{lll}
2\sigma_0 & \mbox{for} & 0\leq|X|\leq c_1\eta; \\
\dfrac{A\alpha^2}{2}\eta^{\alpha-2}(|X|-c_1\eta)^2+2\sigma_0 & \mbox{for} & c_1\eta\leq |X|\leq \eta; \\
A|X|^{\alpha}+\left(2\sigma_0-\dfrac{A}{2}\eta^{\alpha}\right) & \mbox{for} & |X|\geq\eta. 
\end{array}
\right.
\]
which is of $C^{1,1}$, by the construction itself. We still have the parameter $A$ to be adjusted later. Our next step is to show that $\theta$ is an appropriate supersolution, that is, we want to establish 
\begin{equation}
	F(D^2\theta)\leq \beta(\theta) \label{ineq1} 
\end{equation}
pointwise. To this end, we first analyze the equation in the region $c_1\eta\leq|X|\leq\eta$. Direct computations yield
\[
	\theta_{ij}=A\alpha^2\eta^{\alpha-2}\left[\dfrac{X_iX_j}{|X|^2}+\left(1-\dfrac{c_1\eta}{|X|}\right)\left(\delta_{ij}-\dfrac{X_iX_j}{|X|^2}\right)\right],
\]
within $c_1\eta\leq|X|\leq\eta$. At a point of the form $\bar{X}=(|X|,0,\cdots,0)$, we find out
\[
\begin{array}{rll}
\theta_{11}= & A\alpha^2\eta^{\alpha-2} &  \\
\theta_{ii}= & A\alpha^2\eta^{\alpha-2}\left(1-\dfrac{c_1\eta}{|X|}\right) & \mbox{if } i>1 \\  
\theta_{ij}= & 0 & \mbox{if } i\neq j. \\
\end{array}
\]
By symmetric invariance of $\theta$ and ellipticity of $F$, we obtain
\begin{equation}
F(D^2\theta(X))\leq\Lambda\left[A\alpha^2\eta^{\alpha-2}+(N-1)A\alpha^2\eta^{\alpha-2}\left(1-\dfrac{c_1\eta}{|X|}\right)\right]\leq\Lambda NA\alpha^2\eta^{\alpha-2}.\label{ineq2}
\end{equation}
Recall $N$ is the dimension of the space. However, within the region $c_1\eta\leq|X|\leq\eta$, we have
\[
	2\sigma_0\leq\theta(X)\leq \dfrac{A}{2}\eta^{\alpha}+2\sigma_0.
\]
Taking into account that the function $B = B_1$ set in \eqref{def B} is non-decreasing, we readily obtain
\[
\begin{array}{rll}
\beta(\theta(X)) & \geq & \gamma\theta(X)^{\gamma-1}B(2\sigma_0) \\
  & \geq & \gamma\theta(\eta)^{\gamma-1}B(2\sigma_0)  \\
 & \geq & \gamma\left(\dfrac{A}{2}\eta^{\alpha}+2\sigma_0 \right)^{\gamma-1}B(2\sigma_0).\\
\end{array}
\]
Therefore, taking $0<A\ll 1$ small enough,  
\[
	\gamma\left(\dfrac{A}{2}\alpha\eta^{\alpha}+2\sigma_0 \right)^{\gamma-1}B(2\sigma_0)>\dfrac{1}{2}\gamma(2\sigma_0)^{\gamma-1}B(2\sigma_0)>\Lambda NA\alpha^2\eta^{\alpha-2}.
\]
and we indeed obtain the desired pointwise inequality 
\[
F(D^2\theta)\leq \beta(\theta(X)),
\]
in the region $c_1\eta\leq|X|\leq\eta$. Let us turn our attention to the region $\eta\leq|X|$. Readily we have
\[
\theta_{ij}=A\alpha\left[(\alpha-2)|X|^{\alpha-4}X_iX_j+\delta_{ij}|X|^{\alpha-2}\right].
\]
Thus, at a point of the form $(|X|,0,\cdots,0)$, we obtain
\[
\begin{array}{rll}
\theta_{11}= & A\alpha(\alpha-1)|X|^{\alpha-2} &  \\
\theta_{ii}= & A\alpha|X|^{\alpha-2} & \mbox{if } i>1 \\  
\theta_{ij}= & 0 & \mbox{if  } i\neq j. \\
\end{array}
\]
Therefore, again by symmetric invariance of $\theta$ and ellipticity of $F$, we can write
\begin{equation}\label{B1}
F(D^2\theta(X))\leq\Lambda\left[A\alpha(\alpha-1)+(N-1)A\alpha\right]|X|^{\alpha-2}\leq\Lambda NA\alpha\eta^{\alpha-2}.
\end{equation}
On the other hand, in the region $\eta\leq |X|$, we have for $M\geq \sup\limits_{X\in \Omega}|X|$, that
\[
M^\alpha\geq |X|^\alpha-\dfrac{\eta^\alpha}{2}>0,
\]
and so,
\[
\beta(\theta(X)) \geq \gamma\left(A\left(|X|^\alpha-\dfrac{1}{2}\eta^ \alpha\right)+2\sigma_0\right)^{\gamma-1}B(\theta(\eta))> \gamma\left(AM^\alpha+2\sigma_0\right)^{\gamma-1}B(\theta(\eta)).
\]
Thus, adjusting $A>0$ even smaller, if necessary, we can assure 
$$
	A M^\alpha+2\sigma_0<4\sigma_0,
$$ 
and therefore,
\[
	\beta(\theta(X))> \gamma\left(4\sigma_0\right)^{\gamma-1}B(\theta(\eta)).
\]
Finally by $\ref{B1}$ and the inequality above, as well as diminishing the value of $A>0$ even further, if necessary, we reach
\[
	F(D^2\theta(X))\leq\Lambda NA\alpha|X|^{\alpha-2}<\gamma\left(4\sigma_0\right)^{\gamma-1}B(\theta(\eta))\leq\beta(\theta(X)).
\]
So its follow \textit{(3)}. By construction \textit{(2)} is valid, and the proof of Proposition \ref{prop barrier} follows.
\end{proof}

Proposition \ref{prop barrier} provides the existence of the appropriate barrier in the unit scale $\varepsilon = 1$. To furnish the desired supersolution for any $\varepsilon> 0$ small we argue as follows. Fixed $\varepsilon > 0$, we consider the fully nonlinear elliptic operator
$$
	F_\varepsilon(\mathcal{M}) := \varepsilon^{2-\alpha} F(\varepsilon^{\alpha-2}\mathcal{M}).
$$
It is standard to verify that $F_\varepsilon$ is uniform elliptic with the same ellipticity constants as $F$. Proposition \ref{prop barrier} applied to $F_\varepsilon$ provides a $C^{1,1}$ function $\theta = \theta(\varepsilon)$ that satisfies the differential inequality
\[
	F_\varepsilon(D^2\theta(X)) = \varepsilon^{2-\alpha} F(\varepsilon^{\alpha-2}D^2\theta(X))\leq \beta_1(\theta(X)).
\]
Finally, we define
\begin{equation}\label{thetae}
	\theta_{\varepsilon}(X):=\varepsilon^{\alpha}\theta(\varepsilon^{-1}X),
\end{equation}
where once more, $\alpha$ is the value set in \eqref{alpha}. We verify readily that $\theta_{\varepsilon}$ defined above satisfies

\begin{itemize}
\item[$\checkmark$] $\theta_\varepsilon = 2\sigma_0\varepsilon^\alpha$ in $B_{c_1\varepsilon\eta}$;
\item[$\checkmark$] $\theta_\varepsilon\geq c_2\eta^\alpha$ in $\Omega \setminus B_{\varepsilon\eta}$;
\item[$\checkmark$] $\theta_{\varepsilon}\in C^{1,1}(\Omega)$ and it is a supersolution to \eqref{Ee}.
\end{itemize}

We are ready to establish strong nondegeneracy of minimal solutions to the singularly perturbed problem \eqref{Ee}.

\begin{theorem}[Strong nondegeneracy] \label{lemma SN} Let $X_0 \in \{u_\varepsilon > \varepsilon^\alpha \}$. There exist two universal positive constants $c_0 >0$ and $r_0 > 0$ such that if $r< r_0$, there holds
 $$	
 	\sup\limits_{B_r(X_0)} u_\varepsilon \ge c_0 r^\alpha,
 $$	
 for $\alpha$ as in \eqref{alpha}.
 \end{theorem}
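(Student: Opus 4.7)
The plan is to argue by contradiction using the scaled barrier $\theta_\varepsilon$ of Proposition~\ref{prop barrier} together with the infimum characterization of $u_\varepsilon$ furnished by Theorem~\ref{minexist}. Suppose, for a universal constant $c_0>0$ to be chosen in a moment, that there exist $X_0 \in \{u_\varepsilon > \varepsilon^\alpha\}$ and $r<r_0$ such that $\sup_{B_r(X_0)} u_\varepsilon < c_0 r^\alpha$. Applying the construction of $\theta_\varepsilon$ after a translation so that the barrier is centered at $X_0$, and setting the free dilation parameter to $\eta := r/\varepsilon$, one obtains (provided $r_0$ is small enough that $B_r(X_0) \Subset \Omega$) a $C^{1,1}$ viscosity supersolution of \eqref{Ee} satisfying
$$\theta_\varepsilon \equiv 2\sigma_0 \varepsilon^\alpha \ \text{on } B_{c_1 r}(X_0), \qquad \theta_\varepsilon \geq c_2 r^\alpha \ \text{on } \Omega \setminus B_r(X_0).$$

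Next I choose $c_0 := c_2/2$. The ordering $u_\varepsilon < c_0 r^\alpha < c_2 r^\alpha \leq \theta_\varepsilon$ on $\partial B_r(X_0)$ shows that the glued function
$$w(X) := \begin{cases} \min\{u_\varepsilon(X),\, \theta_\varepsilon(X)\}, & X \in \overline{B_r(X_0)},\\ u_\varepsilon(X), & X \in \Omega \setminus B_r(X_0), \end{cases}$$
is continuous across $\partial B_r(X_0)$, agrees with $u_\varepsilon$ on $\partial \Omega$, and is a viscosity supersolution of \eqref{Ee} globally (the minimum of two supersolutions of the same equation is a supersolution, and the gluing respects this because of the boundary ordering). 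At the center, using $\sigma_0 < 1/2$ and the standing hypothesis $u_\varepsilon(X_0) > \varepsilon^\alpha$,
$$w(X_0) = \theta_\varepsilon(X_0) = 2\sigma_0 \varepsilon^\alpha < \varepsilon^\alpha < u_\varepsilon(X_0),$$
so that $w$ sits strictly below $u_\varepsilon$ at $X_0$.

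To close the argument I invoke the minimality of $u_\varepsilon$ as furnished by Theorem~\ref{minexist}. Since $w \leq u_\varepsilon \leq u^\star$ and, after the routine modification $w \mapsto \max\{u_\star, w\}$ if needed in order to fit the admissible class $S$ of Theorem~\ref{existthm}, one has $u_\star \leq w$, the function $w$ belongs to $S$. But then the strict inequality $w(X_0) < u_\varepsilon(X_0)$ contradicts the identity $u_\varepsilon = \inf_{v \in S} v$. Hence no such $r$ can exist and the stated nondegeneracy holds with $c_0 := c_2/2$.

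\noindent\textbf{Main obstacle.} The most delicate point is verifying that the modification $\max\{u_\star, w\}$ needed to land in $S$ remains a viscosity supersolution of \eqref{Ee}, since the max of a supersolution and a subsolution is not generically a supersolution. One bypasses this either by exploiting the explicit form of the Perron pair $u_\star,u^\star$ built at the end of Section~\ref{Section existence} (which, for suitable normalization, is comfortably separated from $\theta_\varepsilon$ on $B_r(X_0)$) or by localizing the minimality argument to $B_r(X_0)$ via a direct comparison that relies only on the monotonicity structure of $F$ and on the scaling of $\beta_\varepsilon$. Once this technical point is dealt with, the rest of the proof is immediate from Proposition~\ref{prop barrier}.
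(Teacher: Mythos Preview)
Your argument is essentially identical to the paper's: center the barrier $\theta_\varepsilon$ at $X_0$ with $\eta=r/\varepsilon$, glue $\min\{u_\varepsilon,\theta_\varepsilon\}$ inside $B_r(X_0)$ to $u_\varepsilon$ outside, and contradict minimality at the center where $\theta_\varepsilon=2\sigma_0\varepsilon^\alpha<\varepsilon^\alpha<u_\varepsilon(X_0)$. The paper phrases the contradiction hypothesis as ``$u_\varepsilon\le\theta_\varepsilon$ on $\partial B_r$'' rather than ``$\sup_{B_r}u_\varepsilon<c_0 r^\alpha$'', which lets it land on $c_0=c_2$ instead of $c_2/2$, but this is cosmetic.

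The technical point you flag is legitimate and the paper glosses over it as well. However, the modification $w\mapsto\max\{u_\star,w\}$ is a detour you do not need. The clean observation is that \emph{any} continuous viscosity supersolution $w$ of \eqref{Ee} with $w=f$ on $\partial\Omega$ automatically satisfies $w\ge u_\star$ in $\Omega$. Indeed, $F(D^2w)\le\beta_\varepsilon(w)\le\zeta=\sup\beta_\varepsilon$ in the viscosity sense, while $u_\star$ solves $F(D^2u_\star)=\zeta$ with the same boundary data; since the right-hand side $\zeta$ is a constant (no $u$-dependence), the standard comparison principle for $F$ gives $w\ge u_\star$. Combined with $w\le u_\varepsilon\le u^\star$, this places your glued $w$ directly in the admissible class $S$ of Theorem~\ref{existthm}, and the contradiction with $u_\varepsilon=\inf_S$ follows without any further modification.
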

 
 \begin{proof}
Given $r < r_0$, we construct $\theta_\varepsilon$ for $\eta = r/\varepsilon$. By minimality of $u_\varepsilon$,
\[
u_\varepsilon(Z)>\theta_\varepsilon(Z),
\]
for some point $Z \in \partial B_r(X_0)$. Indeed, suppose for the sake of contradiction that $u_\varepsilon\leq \theta_\varepsilon$ along $\partial B_{r}$. Define
\[
w_\varepsilon=\left\{
\begin{array}{ccl}
\min\{\theta_\varepsilon,u_\varepsilon\} & \mbox{in} & \overline{B_{r}};\\
 u_\varepsilon & \mbox{in} & \Omega\setminus \overline{B_{r}}. \\
\end{array}
\right.
\]
Thus, $w_\varepsilon$ is supersolution to \eqref{Ee}; however in $B_{c_1r}$, we have,
\[
u_\varepsilon > \varepsilon^\alpha >2\sigma_0\varepsilon^\alpha \equiv \theta_\varepsilon = w_\varepsilon, 
\]
which contradicts the minimality of $u_\varepsilon$. In conclusion,
\[
c_2 r^\alpha \leq \theta_\varepsilon(Z)<u_\varepsilon(Z) \leq \sup_{B_r }u_\varepsilon,
\]
and the Theorem is proven.
 \end{proof}

An immediate Corollary of Theorem \ref{lemma SN} combined with Corollary \ref{gradest1} is the upper and lower control of $u_\varepsilon$ by $r^\alpha$ in $B_r \subset \{u_\varepsilon > \varepsilon^\alpha\}$. 

\begin{corollary}\label{strong col} Given a subdomain $\Omega'\Subset \Omega$, there exists a universal constant $C=C(\Omega')$ such that for $X_0\in \Omega'\cap\{u_\varepsilon > \varepsilon^\alpha \}$ and $r\leq r_0$,
\[
C^{-1} r^\alpha \leq \sup\limits_{B_r(X_0)} u_\varepsilon  \leq C(r^\alpha+u_\varepsilon(X_0)). 
\] 
 \end{corollary}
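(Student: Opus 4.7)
The plan is simply to splice together the nondegeneracy estimate from Theorem \ref{lemma SN} and the sharp growth estimate from Corollary \ref{sharp col}. Both are already packaged in the form needed: Theorem \ref{lemma SN} is stated precisely for $X_0 \in \{u_\varepsilon > \varepsilon^\alpha\}$, and Corollary \ref{sharp col} is stated for all $X_0 \in \Omega'$; both hold uniformly in $\varepsilon$ with a universal radius threshold $r_0 > 0$. There is no genuine technical obstacle here — the content of the Corollary is entirely carried by the two preceding results, and the only thing to verify is the compatibility of the universal constants and radii produced by each.

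More concretely, applying Theorem \ref{lemma SN} at a point $X_0 \in \Omega' \cap \{u_\varepsilon > \varepsilon^\alpha\}$ yields a universal $c_0 > 0$ and a universal $r_0^{(1)} > 0$ with
\[
	\sup_{B_r(X_0)} u_\varepsilon \;\ge\; c_0\, r^\alpha, \qquad r \le r_0^{(1)},
\]
which, upon setting $C \ge c_0^{-1}$, gives the left-hand inequality $C^{-1} r^\alpha \le \sup_{B_r(X_0)} u_\varepsilon$. Applying Corollary \ref{sharp col} at the same $X_0$ furnishes a universal $C' = C'(\Omega')$ and a universal radius $r_0^{(2)}>0$ such that
\[
	\sup_{B_r(X_0)} u_\varepsilon \;\le\; u_\varepsilon(X_0) + C' u_\varepsilon(X_0)^{\gamma/2} r + C' r^\alpha, \qquad r \le r_0^{(2)},
\]
which is the right-hand inequality. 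Taking the common radius $r_0 := \min\{r_0^{(1)}, r_0^{(2)}\}$ and the common constant $C := \max\{c_0^{-1}, C'\}$ — both still depending only on $\Omega'$ and the universal parameters $N, \lambda, \Lambda, \gamma, \|f\|_\infty$, and in particular independent of $\varepsilon$ — produces the claimed double inequality. The only point worth flagging is that $u_\varepsilon(X_0)^{\gamma/2} r$ and $r^\alpha$ need not be comparable, which is why the upper bound retains both terms; no cancellation or refinement is attempted at this stage since the sharper two-sided bound $u_\varepsilon(X_0) \asymp d_\varepsilon(X_0)^\alpha$ will only be legitimate after blow-up analysis.
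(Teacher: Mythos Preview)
Your proof is correct and matches the paper's approach exactly: the paper states this result as an ``immediate Corollary of Theorem \ref{lemma SN} combined with Corollary \ref{sharp col}'' without giving further details. Your write-up simply makes explicit the routine bookkeeping of taking $r_0 = \min\{r_0^{(1)}, r_0^{(2)}\}$ and $C = \max\{c_0^{-1}, C'\}$, which is precisely what the paper leaves implicit.
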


%%%%
%%%%

Recall we have set the following notation: $d_\varepsilon(X) = \text{dist}\left ( X, \partial \{u_\varepsilon > \varepsilon^\alpha \} \right )$. Our next step is to show that in fact $u_\varepsilon$ does growth at the sharp rate away from the free boundary, that is $\sim d_\varepsilon^\alpha$.
        
\begin{theorem}[Sharp Growth] \label{lemma Growth} Let $X_0 \in \{u_\varepsilon > \varepsilon^\alpha \}$. Then there exists $c_0 > 0$ universal such that 
 $$	
 	u_\varepsilon(X_0) \ge c_0 d_\varepsilon(X_0)^\alpha.
 $$	 
 \end{theorem}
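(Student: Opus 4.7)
The plan is to transport the sup-type lower bound from Theorem~\ref{lemma SN} back to the center point $X_0$ via the Lipschitz-type estimate for $u_\varepsilon^{1-\gamma/2}$ supplied by Theorem~\ref{regularitythm}. Write $d := d_\varepsilon(X_0)$ and set $v_\varepsilon := u_\varepsilon^{1-\gamma/2}$. The exponents are so arranged that $\alpha(1-\gamma/2)=1$, and the gradient bound $|\nabla u_\varepsilon|^2 \le C u_\varepsilon^{\gamma}$ of Theorem~\ref{regularitythm} translates into a uniform Lipschitz estimate $|\nabla v_\varepsilon| \le L$ on compactly contained subdomains.

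First, dispose of the easy regime $d \le K\varepsilon$ (for a universal constant $K$): in that case $u_\varepsilon(X_0) \ge \varepsilon^\alpha \ge K^{-\alpha} d^\alpha$, and the conclusion is immediate. In the essential regime $d \ge K\varepsilon$ (with $d \le r_0$; the case $d > r_0$ follows from this by compactness of $\Omega'$), apply Theorem~\ref{lemma SN} at $X_0$ with radius $d$ to obtain a point $Z \in \overline{B_d(X_0)}$ with $u_\varepsilon(Z) \ge c_0 d^\alpha$, which rewrites as $v_\varepsilon(Z) \ge c_0^{1-\gamma/2} d$. The Lipschitz inequality from $Z$ to $X_0$ then gives
\[
v_\varepsilon(X_0) \;\ge\; v_\varepsilon(Z) - L|X_0 - Z| \;\ge\; \bigl(c_0^{1-\gamma/2} - L\bigr)\, d,
\]
and raising to the power $\alpha$ returns $u_\varepsilon(X_0) \ge c\, d^\alpha$ with $c > 0$ universal.

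The main subtlety is the compatibility of universal constants $c_0^{1-\gamma/2} > L$, where $c_0$ comes from the barrier parameter $c_2$ of Proposition~\ref{prop barrier} and $L$ arises from the auxiliary maximum-principle computation of Theorem~\ref{regularitythm}. If the constants as produced are not a priori compatible, I would instead close the argument by a rescaled-barrier construction at the geometric scale $d$, paralleling the proof of Theorem~\ref{lemma SN} but with scale $d$ in place of $\varepsilon$: since $d \gg \varepsilon$ in this regime, the perturbation factor $B_\varepsilon$ saturates to $1$ throughout the flat core of the barrier, and one constructs a supersolution $\Theta$ with $\Theta(X_0) = 2\sigma_0\varepsilon^\alpha < u_\varepsilon(X_0)$ against which the minimality of $u_\varepsilon$ forces a pointwise lower bound at interior points of $B_d(X_0)$, which is finally transported back to $X_0$ by the same Lipschitz estimate applied to $v_\varepsilon$.
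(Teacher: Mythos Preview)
Your direct argument hinges on the inequality $c_0^{1-\gamma/2} > L$, and this is a genuine gap, not a cosmetic one. The constant $c_0$ (which is the $c_2$ of Proposition~\ref{prop barrier}) is produced by choosing the barrier parameter $A$ \emph{small}---in fact arbitrarily small---so as to satisfy the supersolution inequality; there is no mechanism in that construction to make $c_2$ large. The Lipschitz constant $L$ for $u_\varepsilon^{1/\alpha}$, on the other hand, comes from the maximum-principle computation in Theorem~\ref{regularitythm} and has no reason to be small. So the inequality you need is exactly backwards from what the two arguments naturally produce. Your suggested fallback does not escape this: it again ends with ``transported back to $X_0$ by the same Lipschitz estimate,'' which reintroduces the very compatibility you are trying to avoid. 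Rescaling the barrier to the geometric scale $d$ only reproduces the sup-bound of Theorem~\ref{lemma SN}; it does not by itself give a pointwise bound at the center.

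The paper's proof sidesteps the constant competition by a qualitatively different mechanism. It argues by contradiction and rescaling: set $v_n(Y)=d_n^{-\alpha}u_\varepsilon(X_n+d_nY)$ with $v_n(0)\to 0$. The optimal regularity (your Lipschitz bound on $v_n^{1/\alpha}$, in effect) now serves only to show that $v_n$ stays below $\sigma_0(\varepsilon/d_n)^\alpha$ on a fixed small ball $B_{\kappa_0}$, so that the perturbation factor $B_{\varepsilon/d_n}$ vanishes and the rescaled equation becomes homogeneous, $F_n(D^2v_n)=0$, there. Then the \emph{classical Harnack inequality} for the homogeneous equation forces $\sup_{B_{\kappa_0/2}}v_n \le C\,v_n(0)\to 0$, contradicting strong nondegeneracy. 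The point is that Harnack supplies a multiplicative comparison whose constant is decoupled from both $c_0$ and $L$; that is the missing ingredient in your scheme.
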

 \begin{proof}
 Let us suppose for sake of contradiction that no such a constant exists. If so, there would exist a sequence of points $X_n \in  \{u_\varepsilon > \varepsilon^\alpha \}$, with $d_n := d_\varepsilon(X_n) \to 0$ and 
 $$
 	u_\varepsilon(X_n) \le \dfrac{1}{n} d_n^\alpha.
 $$
 Let us define 
 $$
 	v_n(Y) := \dfrac{1}{d_n^\alpha} u_\varepsilon(X_n + d_n Y).
 $$
 The function $v_n \ge 0 $ in $B_1$, and easily we verify that $v_n$ is a minimal solution to 
\begin{equation}\label{eq in n}
 	F_n(D^2 v_n) = \gamma v_n^{1-\gamma}B_{\frac{\varepsilon}{d_n}}(v_n) \quad \text{ in } B_1,
\end{equation}
where $F_n(\mathcal{M}):=d_n^{2-\alpha}F(d_n^{\alpha-2}\mathcal{M})$ for all $\mathcal{M} \in \mbox{Sym}(N)$ and $B_{\frac{\varepsilon}{d_n}}$ is the smooth approximation of $t^{+}$ set up in \eqref{def B}. From its very definition, we check that
\begin{equation}\label{properties Be}
B_{\frac{\varepsilon}{d_n}}(t)=
\left\{
\begin{array}{ccc}
	0 & \text{for} & 0\leq t \leq \sigma_0\left(\dfrac{\varepsilon}{d_n}\right)^{\alpha}, \\
	1 & \text{for} & t\leq(1-\sigma_0)\cdot\left(\dfrac{\varepsilon}{d_n}\right)^\alpha.\\
\end{array}
\right.
\end{equation}
  Since $F_n$ is uniformly elliptic with same ellipticity constants as $F$, we can apply Theorem \ref{lemma SN} to $v_n$ as to obtain 
 \begin{equation}\label{eq 01}
 	\sup\limits_{B_{\kappa}} v_n \ge c_0 \kappa^\alpha,
 \end{equation}
 for a universal constant $c_0$ and for any $\kappa > 0$. However, by Theorem $\eqref{regularitythm}$, there holds
$$
	v_n(X) \le C(\kappa^\alpha+v_n(0)), 
$$  
for a universal constant $C>0$ and for any $\kappa > 0$. In particular, for $\kappa_0 \ll 1$,
$$
	v_n(X) \le C\left(\frac{\sigma_0}{2C}\varepsilon^\alpha+v_n(0)\right) \quad \text{ in } B_{\kappa_0}.
$$
If we take $n \gg 1$, $v_n(0) \le \dfrac{\sigma_0}{2C}\varepsilon^\alpha$ and then
$$
	v_n(X) \le \sigma_0\varepsilon^\alpha\quad \text{ in } B_{\kappa_0}.
$$
In view of Equation \ref{eq in n} and \eqref{properties Be}, we see
$$
	F_n(D^2 v_n) = 0 \quad \text{ in } B_{\kappa_0},
$$
for $n \gg 1$. But then, by classical homogeneous Harnack inequality, see \cite{C2}, and strong nondegeneracy stated in \eqref{eq 01}
$$
	c_0 \left (\frac{\kappa_0}{2}  \right )^\alpha \le \sup\limits_{B_{\frac{\kappa_0}{2}}} v_n \le C v_n(0) = \text{o}(1),
$$
which  finally give us a contradiction.
 \end{proof}
 
An important consequence of Theorem \ref{lemma Growth} is the complete control of $u_\varepsilon(X)$ in terms of the $d_\varepsilon(X)^\alpha$. 

 \begin{corollary}\label{Growth col} Given a subdomain $\Omega'\Subset \Omega$, there exists a universal constant $C=C(\Omega')$ such that for $X\in \Omega'\cap\{u_\varepsilon > \varepsilon^\alpha \}$ and $\varepsilon \le d_\varepsilon(X)$,
\[
Cd_\varepsilon(X)^\alpha\geq u_\varepsilon(X) \ge C^{-1} d_\varepsilon(X)^\alpha.
\] 
 \end{corollary}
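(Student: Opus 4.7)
The lower bound $u_\varepsilon(X) \ge C^{-1} d_\varepsilon(X)^\alpha$ is simply the content of Theorem \ref{lemma Growth}, so the actual work lies entirely in the matching upper bound.

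My plan for the upper estimate is to apply the sharp growth control of Corollary \ref{sharp col} at a closest free boundary point. Fix $X \in \Omega' \cap \{u_\varepsilon > \varepsilon^\alpha \}$ and write $d := d_\varepsilon(X)$. By continuity of $u_\varepsilon$, there exists $Y \in \partial \{u_\varepsilon > \varepsilon^\alpha\}$ with $|X-Y|=d$ and $u_\varepsilon(Y) = \varepsilon^\alpha$. After enlarging $\Omega'$ slightly to a subdomain $\Omega'' \Subset \Omega$ containing all such $Y$'s (permissible because we may assume $d$ does not exceed the universal scale $r_0$), Corollary \ref{sharp col} applied at $X_0 = Y$ with radius $r = d$ yields
\[
u_\varepsilon(X) \;\le\; \sup_{B_d(Y)} u_\varepsilon \;\le\; u_\varepsilon(Y) + C\, u_\varepsilon(Y)^{\gamma/2}\, d + C\, d^\alpha \;=\; \varepsilon^\alpha + C\, \varepsilon^{\alpha\gamma/2}\, d + C\, d^\alpha.
\]

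The decisive algebraic fact to leverage is the identity $\alpha\gamma/2 + 1 = \alpha$, equivalent to $\alpha = \tfrac{2}{2-\gamma}$, which guarantees that every term scales precisely like $d^\alpha$. Combined with the standing hypothesis $\varepsilon \le d$, this yields $\varepsilon^\alpha \le d^\alpha$ and $\varepsilon^{\alpha\gamma/2} d \le d^{\alpha\gamma/2 + 1} = d^\alpha$, so all three summands above are bounded by universal multiples of $d^\alpha$, giving $u_\varepsilon(X) \le C d^\alpha$ as desired.

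The residual case $d > r_0$ is immediate: an ABP estimate applied to the equation $F(D^2 u_\varepsilon) = \beta_\varepsilon(u_\varepsilon)$ with the subsolution/supersolution pair used in Theorem \ref{minexist} gives a universal $L^\infty$ bound $\|u_\varepsilon\|_\infty \le C$, whence $u_\varepsilon(X) \le C \le (C/r_0^\alpha)\, d^\alpha$. I do not anticipate any serious obstacle here; the only subtlety is the bookkeeping that $Y$ should lie in a slightly fattened subdomain so that Corollary \ref{sharp col} genuinely applies, and this is absorbed into the dependence of $C$ on $\Omega'$. The conceptual point is that the hypothesis $\varepsilon \le d_\varepsilon(X)$ is exactly what is needed so that the $\varepsilon^\alpha$ penalty at the approximate free boundary is subordinate to the intrinsic scale $d_\varepsilon(X)^\alpha$.
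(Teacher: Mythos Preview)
Your argument is correct and mirrors the paper's proof almost verbatim: the lower bound is quoted from Theorem \ref{lemma Growth}, and the upper bound is obtained by applying Corollary \ref{sharp col} at a boundary point $Z\in\partial\{u_\varepsilon>\varepsilon^\alpha\}$ with $u_\varepsilon(Z)=\varepsilon^\alpha$, then absorbing each term into $d_\varepsilon(X)^\alpha$ via $\varepsilon\le d_\varepsilon(X)$ and the identity $\alpha\gamma/2+1=\alpha$. Your treatment is in fact slightly more detailed than the paper's, which does not spell out the algebraic identity, the domain bookkeeping for $Y$, or the residual case $d>r_0$.
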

 
 \begin{proof}
 The inequality by below is precisely the statement of Theorem \ref{lemma Growth}. Now for  $Z\in\partial\{u_{\varepsilon}>\varepsilon^{\alpha}\}$, such that $|Z-X|=d_\varepsilon(X)$, it follows from Theorem $\ref{regularitythm}$
 \[
 u_\varepsilon(X)\leq \sup\limits_{B_{d_\varepsilon(X)}(Z)}u_\varepsilon \leq C\left(d_\varepsilon(X)^\alpha+\varepsilon^\alpha\right) \leq C d_\varepsilon(X)^\alpha,
 \]
 and the Corollary is proven.
 \end{proof}

 As usual a fine geometric control  as the one stated in Corollary \ref{Growth col} implies uniform positive density of the approximating region $\{u_\varepsilon > \varepsilon^\alpha \}$.
 
 \begin{corollary}\label{corollarydensity} Given a subdomain $\Omega'\Subset\Omega$, there exists constant $0<c\le 1$, depending only on $\Omega'$ and universal parameters, such that for any $X\in \Omega' \cap \{u_{\varepsilon}>\varepsilon ^{\alpha}\}$ and $\varepsilon\ll \delta$, we have
\[
	\dfrac{\Leb \left ( B_{\delta}(X)\cap\{u_{\varepsilon}> \varepsilon^\alpha\} \right )}{\Leb (B_\delta)} \geq c .
\]
\end{corollary}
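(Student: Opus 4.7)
The goal is a uniform lower density bound for the set $\{u_\varepsilon > \varepsilon^\alpha\}$ at any point $X \in \Omega' \cap \{u_\varepsilon > \varepsilon^\alpha\}$. The plan is to locate a sub-ball of $B_\delta(X)$ of radius comparable to $\delta$ that is entirely contained in $\{u_\varepsilon > \varepsilon^\alpha\}$; any such sub-ball contributes a universal fraction of the volume of $B_\delta$. All of the ingredients are already in place: strong nondegeneracy supplies a point where $u_\varepsilon$ is large on scale $\delta^\alpha$, while the sharp upper bound of Corollary \ref{sharp col} forbids this large value from being close to the free interface $\partial\{u_\varepsilon > \varepsilon^\alpha\}$.

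First I would apply Theorem \ref{lemma SN} to the ball $B_{\delta/2}(X)$, obtaining $Y \in \overline{B_{\delta/2}(X)}$ with $u_\varepsilon(Y) \ge c_0(\delta/2)^\alpha$. Since $\varepsilon \ll \delta$, this already forces $Y \in \{u_\varepsilon > \varepsilon^\alpha\}$, so that $d_\varepsilon(Y) > 0$. Next, letting $Z \in \partial\{u_\varepsilon > \varepsilon^\alpha\}$ be a point realizing the distance $|Y-Z| = d_\varepsilon(Y)$ (so that $u_\varepsilon(Z) = \varepsilon^\alpha$) and applying Corollary \ref{sharp col} centered at $Z$ with radius $d_\varepsilon(Y)$ yields
\[
u_\varepsilon(Y) \le \varepsilon^\alpha + C\,\varepsilon^{\alpha\gamma/2}\, d_\varepsilon(Y) + C\, d_\varepsilon(Y)^\alpha.
\]
Combining this with the lower bound $u_\varepsilon(Y) \ge c_0(\delta/2)^\alpha$ and using the algebraic identity $\alpha\gamma/2 = \alpha - 1$, which is immediate from \eqref{alpha}, a short scaling argument in the ratio $\varepsilon/\delta$ forces $d_\varepsilon(Y) \ge \kappa\delta$ for some universal constant $\kappa \in (0, 1/2]$.

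Consequently $B_{\kappa\delta}(Y) \subset \{u_\varepsilon > \varepsilon^\alpha\}$, and since $|Y-X| \le \delta/2$ with $\kappa \le 1/2$, the intersection $B_{\kappa\delta}(Y) \cap B_\delta(X)$ contains a ball of radius at least $\kappa\delta/2$, which gives the claimed positive lower density. The only point requiring real care is the absorption of the cross term $C\,\varepsilon^{\alpha\gamma/2}\, d_\varepsilon(Y)$ in the upper bound for $u_\varepsilon(Y)$: this is made possible precisely because the identity $\alpha\gamma/2 = \alpha-1$ renders the offending term of order $\varepsilon^{\alpha-1}\delta$, which is negligible compared to $\delta^\alpha$ under the hypothesis $\varepsilon \ll \delta$. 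Once this scaling is observed, the estimate on $d_\varepsilon(Y)$ is routine.
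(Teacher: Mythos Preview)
Your proof is correct and follows essentially the same architecture as the paper's: locate via strong nondegeneracy a point $Y$ where $u_\varepsilon \gtrsim \delta^\alpha$, then use optimal regularity to produce a sub-ball of radius $\sim\delta$ inside $\{u_\varepsilon > \varepsilon^\alpha\}$. The only difference is in how the second step is executed: the paper applies the $C^{1,\alpha-1}$ expansion directly at $Y$ to show $u_\varepsilon$ stays above $\varepsilon^\alpha$ in $B_{\tau\delta}(Y)$, whereas you apply Corollary~\ref{sharp col} at the nearest free-boundary point to bound $d_\varepsilon(Y)$ from below --- this is the upper half of Corollary~\ref{Growth col}, and the two arguments are dual to one another.
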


\begin{proof}

By strong non-degeneracy there exists $Y_0\in \overline{B_{\delta}(X)}\cap\{u_\varepsilon> \varepsilon^\alpha\}$ such that
\[
	u_\varepsilon(Y_0)\geq c_0\delta^\alpha.
\]
Define $d(Y):=|Y_0-Y|$. By Corollary $\ref{strong col}$,
$$
u_\varepsilon(Y_0)\leq \sup\limits_{B_{d(Y)}(Y)}u_\varepsilon \leq c_1(d(Y)^\alpha+u_\varepsilon(Y)),
$$
and so,
$$
\dfrac{1}{c_1}\left( c_0\delta^\alpha - c_1d(Y)^\alpha \right)\leq u_\varepsilon(Y).
$$
Note that, we can choose $0\leq \tau \ll 1$ universally small such that 
$$Y\in B_{\tau\delta}(Y_0)\cap B_\delta(X) \quad \mbox{and} \quad
 u_\varepsilon(Y) \geq \varepsilon^\alpha.
$$
In conclusion, 
$$
	\Leb \left (B_{\delta}(X)\cap\{u_{\varepsilon}>\varepsilon^\alpha\} \right ) \geq \Leb \left ( B_{\delta}(X)\cap B_{\tau \delta}(Y_0) \right ) \geq c\delta^N.
$$
for a universal constant $c>0$. 
\end{proof}

\section{Harnack type inequalities} \label{Section Harnack}

It is well established that Harnack type inequalities are among the central properties of solutions to second order elliptic equations. For non-negative viscosity solutions to fully nonlinear equations with non-homogeneous right hand side, 
$$
	F(D^2v) = f(X), \quad Q_1
$$
Krylov-Safonov \cite{KS} and Caffarelli \cite{C2} (see also \cite{CC}, Chapter 4) proved the following sharp Harnack inequality:
\begin{equation}\label{classical hi}
	\sup\limits_{Q_{1/2}}  v \le C(n,\lambda, \Lambda) \left ( \inf\limits_{Q_{1/2}} v + \|f\|_{L^n(Q_1)} \right).
\end{equation} 
   As mentioned in previous Sections, one of the major mathematical difficulties in dealing with singular equations as in \eqref {Damiao_Eduardo Eq01} is the fact that right hand side blows-up near the quenching region. In particular, if one tries to interpret the singular term $\gamma u^{\gamma -1}$ as a right hand side $f(X)$ for the equation, classical Harnack inequality \eqref{classical hi} gives no information near the free boundary. 

\par

The key objective of this Section is to establish, uniform-in-$\varepsilon$ \textit{clean} geometric Harnack type inequalities for solutions to equation \eqref{Ee}. 

 \begin{theorem}[$L^1$-Harnack inequality] \label{int col} Given $\Omega'\Subset\Omega$, $X_0 \in \{u_\varepsilon > \varepsilon^\alpha \}\cap \Omega'$. Then
$$
	\intav{B_\rho(X_0)} u_\varepsilon\, dx \ge c \rho^\alpha,
$$
for a universal constant $c>0$, independent of $\varepsilon$. 
 \end{theorem}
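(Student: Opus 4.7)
The plan is to combine the strong nondegeneracy of Theorem \ref{lemma SN} with the sharp gradient control of Theorem \ref{regularitythm} so as to exhibit a sub-ball of $B_\rho(X_0)$ of radius comparable to $\rho$ on which $u_\varepsilon$ is pointwise bounded below by a universal multiple of $\rho^\alpha$. Once such a ball is produced, the $L^1$-estimate follows by direct integration. Throughout I assume $\rho \le r_0$, with $r_0$ as in Corollary \ref{sharp col} and Theorem \ref{lemma SN}; larger radii are harmless as the whole inequality rescales.

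First I would apply Theorem \ref{lemma SN} on $B_{\rho/2}(X_0)$, producing $Y \in \overline{B_{\rho/2}(X_0)}$ with
$$u_\varepsilon(Y) \ge c_0(\rho/2)^\alpha =: s.$$
Rerunning the argument of Corollary \ref{sharp col} with the \emph{reversed} triangle inequality, together with the pointwise gradient bound $|\nabla u_\varepsilon(Y)| \le C u_\varepsilon(Y)^{\gamma/2} = C s^{\gamma/2}$ supplied by Theorem \ref{regularitythm}, yields the lower expansion
$$u_\varepsilon(X) \ge s - C s^{\gamma/2}|X-Y| - C|X-Y|^\alpha$$
for $X$ in a universal neighborhood of $Y$.

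Next I would set $r := \delta\, s^{1/\alpha}$ with $\delta > 0$ small and universal. The crucial algebraic observation is the exponent identity $\tfrac{\gamma}{2} + \tfrac{1}{\alpha} = 1$, coming from $\alpha = \tfrac{2}{2-\gamma}$. With this choice, both error terms are linear in $s$:
$$C s^{\gamma/2}\, r = C\delta\, s, \qquad C r^\alpha = C\delta^\alpha\, s.$$
Taking $\delta$ universally small forces each below $s/4$, so $u_\varepsilon \ge s/2$ on the whole ball $B_r(Y)$. Moreover $r = \delta s^{1/\alpha} \ge \delta c_0^{1/\alpha}(\rho/2)$, i.e.\ $r \ge c_1\rho$ universally.

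Finally, setting $r' := \min(r,\rho/2)$, the ball $B_{r'}(Y)$ is contained in $B_\rho(X_0)$ since $Y \in \overline{B_{\rho/2}(X_0)}$, has radius $\ge c_2\rho$, and carries the bound $u_\varepsilon \ge s/2 \ge c_3\rho^\alpha$. Therefore
$$\int_{B_\rho(X_0)} u_\varepsilon\,dx \ge \frac{s}{2}\,\Leb(B_{r'}) \ge c\,\rho^{\alpha+N},$$
which delivers the claim after dividing by $\Leb(B_\rho)$. The only mildly delicate point is spotting the exponent identity $1/\alpha + \gamma/2 = 1$, which is exactly what makes the inner radius $r$ scale linearly in $\rho$; without it, the $C^{1,\alpha-1}$ estimate would lose a power and the argument would collapse. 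Everything else is bookkeeping.
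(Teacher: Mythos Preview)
Your argument is correct and mirrors the paper's: both use strong nondegeneracy to locate a point $Y$ with $u_\varepsilon(Y)\gtrsim \rho^\alpha$, then the $C^{1,\alpha-1}$ lower expansion (the paper simply cites the computation in Corollary~\ref{corollarydensity} for this step, which is exactly your exponent-identity calculation) to obtain $u_\varepsilon\gtrsim \rho^\alpha$ on a sub-ball of radius $\sim\rho$, and integrate. One small slip: you write $u_\varepsilon(Y)^{\gamma/2}=s^{\gamma/2}$ while only $u_\varepsilon(Y)\ge s$ is known, but this is harmless since $t\mapsto t-Ct^{\gamma/2}r$ is increasing in $t\ge s$ once $r=\delta s^{1/\alpha}$ with $\delta$ small (precisely by your identity $\gamma/2+1/\alpha=1$), so the displayed lower bound with $s$ holds regardless.
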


 \begin{proof} From Lemma \ref{lemma SN},  there is a $Z \in \overline{B_\rho(X_0)}\cap\{u_\varepsilon>\varepsilon^\alpha\}$ and a $c_0>0$ universal, such that
 $$
 	u_ \varepsilon(Y_0) \ge c_0\rho^\alpha.
 $$
 As in the proof of the Corollary $\ref{corollarydensity}$, for $\theta \ll 1$ but universal, we obtain
 \[
 \begin{array}{lll}
		u_\varepsilon(Y) \geq C\rho^\alpha  & \mbox{in} & B_{\theta\rho}(Y_0). \\
\end{array}
 \] 
 Finally,
 $$
 	\intav{B_\rho(X_0)} u_\varepsilon\, dx \ge C_N \intav{B_\rho(X_0) \cap B_{\theta \rho}(Z)} u_\varepsilon\, dx  \ge C \rho^\alpha,
 $$
 for $C>0$ a universal constant. Thus, the proof is concluded.
 \end{proof}

Our next Theorem is a \textit{clean} Harnack inequality for ball touching the approximating free boundary  $\partial \{u_\varepsilon > \varepsilon^\alpha\}$. 

\begin{theorem}[Harnack Inequality for tangential balls]  Let $X_0 \in \{u_\varepsilon > \varepsilon^\alpha \}$ and $\varepsilon \le d :=d_\varepsilon(X_0)$. Then, there exist a universal constant $C>0$ such that
$$
	\sup\limits_{B_{\frac{d}{2}}(X_0)} u_\varepsilon \le C \inf\limits_{B_{\frac{d}{2}}(X_0)} u_\varepsilon.
$$
\end{theorem}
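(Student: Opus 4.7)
The tangential Harnack inequality is in fact an immediate corollary of the matching sharp lower and upper growth estimates already established, so I would not expect a genuine obstacle. The plan is to split into a lower bound for the infimum (provided by the Sharp Growth, Theorem \ref{lemma Growth}) and an upper bound for the supremum (provided by Corollaries \ref{sharp col} and \ref{Growth col}), both of order $d^{\alpha}$, and then simply divide.

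For the lower bound on $\inf_{B_{d/2}(X_0)} u_\varepsilon$, I would observe that by the triangle inequality every $X \in B_{d/2}(X_0)$ satisfies $d_\varepsilon(X) \ge d/2$. In particular $X \in \{u_\varepsilon > \varepsilon^\alpha\}$, so Theorem \ref{lemma Growth} applies pointwise to give
\[
   u_\varepsilon(X) \;\ge\; c_0\, d_\varepsilon(X)^\alpha \;\ge\; c_0\,(d/2)^\alpha \;=:\; \underline{c}\, d^\alpha.
\]

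For the upper bound on $\sup_{B_{d/2}(X_0)} u_\varepsilon$, the hypothesis $\varepsilon \le d = d_\varepsilon(X_0)$ precisely enables Corollary \ref{Growth col} at $X_0$, yielding $u_\varepsilon(X_0) \le C d^\alpha$. I would then apply Corollary \ref{sharp col} centered at $X_0$ with $r = d/2$:
\[
    \sup_{B_{d/2}(X_0)} u_\varepsilon \;\le\; u_\varepsilon(X_0) + C\, u_\varepsilon(X_0)^{\gamma/2}\cdot(d/2) + C\,(d/2)^\alpha.
\]
Each of the three summands is at most a universal multiple of $d^\alpha$: the first and last are by inspection, while the middle term uses the self-similar identity $\alpha\gamma/2 + 1 = \alpha$, which is nothing but the scaling invariance built into the choice of $\alpha$ in \eqref{alpha}. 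Hence $\sup_{B_{d/2}(X_0)} u_\varepsilon \le \overline{C}\, d^\alpha$.

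Dividing the two estimates produces the universal constant $C := \overline{C}/\underline{c}$. The only bookkeeping point is the restriction $r \le r_0$ in Corollary \ref{sharp col}; this is handled routinely by confining $X_0$ to $\Omega' \Subset \Omega$ (so that $d$ stays bounded) and, if necessary, covering $B_{d/2}(X_0)$ by a finite chain of balls of radius $r_0$ and iterating. No additional PDE argument, rescaling/compactness blow-up, or new barrier is needed beyond those already performed in Sections \ref{Section sharp reg} and \ref{Section Nondeg}; the only genuinely delicate work was in establishing the matched sharp bounds, which has already been done.
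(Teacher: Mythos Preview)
Your proof is correct and follows essentially the same route as the paper: bound the infimum from below via the sharp growth Theorem \ref{lemma Growth} (using $d_\varepsilon(X)\ge d/2$ on the half-ball), bound the supremum from above via Corollary \ref{sharp col} together with the upper estimate $u_\varepsilon(X_0)\le C d^\alpha$, and combine using the scaling identity $\alpha\gamma/2+1=\alpha$. The only cosmetic difference is that the paper obtains $u_\varepsilon(X_0)\le C d^\alpha$ by applying Corollary \ref{sharp col} directly at a point $Y\in\partial\{u_\varepsilon>\varepsilon^\alpha\}$, whereas you quote Corollary \ref{Growth col}, whose proof is precisely that computation.
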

\begin{proof}
Let $\xi_0,\xi_1\in \overline{B_{\frac{d}{2}}(X_0)}$, such that
\[
\inf\limits_{B_{\frac{d}{2}}(X_0)}u_\varepsilon=u(\xi_0) \quad \mbox{and} \quad \sup\limits_{B_{\frac{d}{2}}(X_0)}u_\varepsilon=u(\xi_1).
\]
As $d_\varepsilon(\xi_0)\geq \dfrac{d}{2}$, by nondegeneracy 
\begin{equation}\label{N1}
u_\varepsilon(\xi_0)\geq C_1d^\alpha.
\end{equation}
By other hand, using the corollary $\ref{strong col}$, we get
\[
u_\varepsilon(\xi_1)\leq C_2\left(\dfrac{d^\alpha}{2}+u_\varepsilon(X_0)\right).
\]
Taking $Y\in\partial\left\{u_\varepsilon>\varepsilon^\alpha\right\}$, where $d=|X_0-Y|$, we have that 
\[
u_\varepsilon(X_0) \leq \sup\limits_{B_d(Y)} u_\varepsilon\leq C_2(d^\alpha+\varepsilon^\alpha)\leq C_3 d^\alpha.
\]
So, by the three last inequalities, we obtain
\[
	\sup\limits_{B_{\frac{d}{2}}(X_0)} u_\varepsilon \le C \inf\limits_{B_{\frac{d}{2}}(X_0)} u_\varepsilon.
\]
for a constant $C>0$ that does not depend of $u_\varepsilon$ and $\varepsilon$.
\end{proof}

\section{Hausdorff estimates of the free boundary} \label{Section Hausdorff}

In this section, we turn our attention to uniform geometric-measure properties of $\sim \varepsilon^\alpha$-level surfaces of $u_\varepsilon$. These surfaces approximate the limiting free boundary $\mathfrak{F} := \partial \{u>0 \} \cap \Omega$, where $u$ is the desired limiting function. Through this section we shall work under the following extra structural  condition on the operator $F$:
\begin{definition}\label{AL cond on F} We say a uniformly elliptic operator $F \colon \text{Sym}(N) \to \mathbb{R}$ is asymptotically concave if there exists a positive definite matrix $\mathcal{F}=\left(f_{ij}\right)_{ij}$ and a nonnegative constant $C_F \ge 0$ such that
	\begin{equation}\label{hip}\tag{AC}
		f_{ij}\mathcal{M} _{ij} - F(\mathcal{M} ) \ge - C_F,
	\end{equation}
for all matrix $\mathcal{M}  \in \text{Sym}(N)$,
\end{definition}
Initially, let us point out that indeed hypothesis \eqref{hip} is an asymptotic condition as $\|M\| \gg 1$, as it suffices to hold in the limit for $\|M\| \to +\infty$. It represents a sort of concavity condition at infinity of $F$. For concave operators, $C_F = 0$. The structural condition \eqref{hip} arises from recent considerations on the \textit{recession} operator
$$
	F^\star(\mathcal{M} ) := \lim\limits_{\mu \to 0} \mu F(\mu^{-1} \mathcal{M} ).
$$ 
The limiting operator $F^\star$ should be interpreted as the tangential equation for the natural elliptic scaling on $F$.
For example, for a number of elliptic operators, it is possible to verify the existence of the limit 
$$
	b_{ij} := \lim\limits_{\|\mathcal{M}\| \to \infty} F_{ij}(\mathcal{M}).
$$ 
In this case, $F^\star(\mathcal{M}) = \tr(b_{ij} \mathcal{M})$ and \eqref{hip} is automatically satisfied. A particularly interesting example is the class of Hessian operators of the form 
$$
	F_\iota(M) = f_\iota(\lambda_1, \lambda_2, \cdots \lambda_N) := \sum\limits_{j=1}^N (1 + \lambda_j^\iota)^{1/\iota},
$$
where $\iota$ is an odd natural number. For this family of operators, we have $F^\star_\iota = \Delta$
and condition \eqref{hip} is satisfied.

In \cite{RT} it is proven that the recession operator $F^\star$ rules the free boundary condition for fully nonlinear cavitation problems. In \cite{ST}, it is established  further regularity estimates of solutions to $F(X,D^2u) = f(X)$ via properties of the recession function. 
\par

Before continuing, let us make few remarks as to organize some systematic arguments that will appear within the next proofs. 

\begin{remark}\label{obs1}
Given $X_0\in \{u_\varepsilon > \varepsilon^\alpha\}$, where $u_\varepsilon(X_0)=C_1\varepsilon^{\alpha}$ for $C_1>1$, $\varepsilon\ll\rho$ and $\rho$ universally small, we have from Theorem \ref{regularitythm} that in $B_\rho(X_0)$, for $\rho \ll 1$ to be adjusted soon, there holds
\[
	u_\varepsilon^{\gamma-1}\geq C_2(\rho^\alpha+C_1\varepsilon^\alpha)^{\gamma-1}.
\] 
Therefore, if $\varepsilon$ is small as to 
$$
	\varepsilon^\alpha<\dfrac{\rho^\alpha}{C_1}
$$ 
and the radius $\rho$ is also selected universally small as to 
$$
	2C_2\rho^\alpha \leq \sqrt[\gamma-1]{\frac{2}{\gamma}C_F}
$$ 
we  readily obtain 
\[
	\gamma u_\varepsilon^{\gamma-1}\geq 2C_F \quad \text{in } B_\rho(X_0),
\]
for $C_F>0$ as in \eqref{hip}. Also, as 
$$
	(1-\sigma_0)\varepsilon^\alpha<C_1\varepsilon^\alpha,
$$ 
we have 
$$
	F(D^2u_\varepsilon)=\beta_\varepsilon(u_\varepsilon)=\gamma u_\varepsilon^{\gamma-1}, \quad \text{in } B_\rho(X_0).
$$ 
In conclusion, we obtain that $u_\varepsilon$ is a $f_{ij}$-subharmonic function in $B_\rho(X_0)$ for $\varepsilon \ll 1$, i.e.,
$$
	f_{ij}D_{ij}u_\varepsilon \geq F(D^2u_{\varepsilon})-C_F=\gamma u_\varepsilon^{\gamma-1}-C_F\geq 0,
$$
\end{remark}

We are now ready to establish the first Hausdorff type estimate for the level surface $\{u_\varepsilon \sim \varepsilon^\alpha\}$.

\begin{lemma}\label{hausdorff1}
Given a subdomain $\Omega'\Subset \Omega$, there exists a constant $C$ depending on $\Omega'$ and universal parameters such that, for $X_0\in\Omega'\cap \{u_\varepsilon> \varepsilon^\alpha\}$, with $u_\varepsilon(X_0)=C_1\varepsilon^{\alpha}$, with $C_1>1$ and $\varepsilon\ll \rho$ for $\rho$ universally small, there holds
\[
\int\limits_{\{C_1\varepsilon^\alpha<u_\varepsilon<\mu^\alpha\}\cap B_{\rho}(X_0)}u_{\varepsilon}^{-\gamma}|\nabla u_\varepsilon|^2 dX\leq C\mu\rho^{N-1},
\]
for a.e.  $0<\rho\ll 1$.
\end{lemma}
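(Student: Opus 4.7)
The plan is to mimic, in this nonvariational setting, a Caccioppoli-type estimate for the $f_{ij}$-subharmonic function $u_\varepsilon$, where $(f_{ij})$ is the constant positive-definite matrix supplied by the asymptotic concavity hypothesis \eqref{hip}. Indeed, by Remark \ref{obs1}, under the hypothesis $u_\varepsilon(X_0) = C_1\varepsilon^\alpha$ with $C_1>1$ and $\varepsilon \ll \rho$, one has the pointwise inequality $f_{ij} D_{ij} u_\varepsilon \ge 0$ throughout $B_\rho(X_0)$, which serves as the substitute for the missing Euler--Lagrange structure.

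The core of the argument is a one-variable auxiliary function $\Psi\in C^{1,1}(\mathbb{R})$ that vanishes on $[0, C_1\varepsilon^\alpha]$, has $\Psi' \ge 0$, and whose second derivative equals $t^{-\gamma}\chi_{(C_1\varepsilon^\alpha,\mu^\alpha)}(t)$; a natural choice being
\[
\Psi(t) \,=\, \int_0^t \int_0^s r^{-\gamma}\, \chi_{(C_1\varepsilon^\alpha,\mu^\alpha)}(r)\, dr\, ds,
\]
whose first derivative saturates at $\Psi'(\mu^\alpha) \le C \mu^{\alpha(1-\gamma)} = C\mu^{2-\alpha}$ once $t \ge \mu^\alpha$. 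The chain rule combined with the positivity of $(f_{ij})$, of $\Psi'(u_\varepsilon)$, and of $f_{ij} D_{ij}u_\varepsilon$ yields
\[
f_{ij}D_{ij}\bigl(\Psi(u_\varepsilon)\bigr) \,\ge\, \lambda\, u_\varepsilon^{-\gamma} |\nabla u_\varepsilon|^2 \chi_{A_\mu}, \qquad A_\mu := \{C_1\varepsilon^\alpha < u_\varepsilon < \mu^\alpha\}.
\]
Integrating over $B_\rho(X_0)$ and invoking the divergence theorem, which is legitimate since $(f_{ij})$ is constant, leads to
\[
\lambda \int_{A_\mu \cap B_\rho(X_0)} u_\varepsilon^{-\gamma} |\nabla u_\varepsilon|^2 \, dX \,\le\, \int_{\partial B_\rho(X_0)} \Psi'(u_\varepsilon)\, f_{ij} \nu_i D_j u_\varepsilon \, d\mathcal{H}^{N-1}.
\]

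The boundary integrand is controlled through the sharp gradient bound of Theorem \ref{regularitythm}, namely $|\nabla u_\varepsilon| \le C u_\varepsilon^{\gamma/2}$, together with the algebraic identity $\alpha(1-\gamma/2) = 1$. On the portion $\{u_\varepsilon \le \mu^\alpha\} \cap \partial B_\rho(X_0)$, this yields the pointwise estimate
\[
\Psi'(u_\varepsilon)\, |\nabla u_\varepsilon| \,\le\, C\, u_\varepsilon^{\gamma/2}\int_{C_1\varepsilon^\alpha}^{u_\varepsilon} s^{-\gamma}\, ds \,\le\, C\, u_\varepsilon^{1 - \gamma/2} \,\le\, C\mu,
\]
so this portion contributes at most $C\mu \rho^{N-1}$ to the right-hand side.

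The main obstacle is the complementary saturation region $\{u_\varepsilon \ge \mu^\alpha\} \cap \partial B_\rho(X_0)$, on which $\Psi'(u_\varepsilon) \sim \mu^{2-\alpha}$ while only the crude bound $|\nabla u_\varepsilon| \le C\rho^{\alpha-1}$ is available, this last one coming from the ambient upper bound $u_\varepsilon \le C\rho^\alpha$ in $B_\rho(X_0)$ guaranteed by Corollary \ref{sharp col}. A naive estimate here produces an unwanted factor of $(\rho/\mu)^{\alpha-1}$. This is precisely where the ``a.e.\ $0 < \rho \ll 1$'' qualifier enters: I expect to handle it by averaging the boundary inequality over $\rho$ in a dyadic shell, so that Fubini trades the offending surface integral over $\{u_\varepsilon \ge \mu^\alpha\} \cap \partial B_\rho$ for the volume of $\{u_\varepsilon \ge \mu^\alpha\}$ in the shell. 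The nondegeneracy and positive-density results from Section \ref{Section Nondeg} are then expected to control this volume, absorb the lossy factor, and recover the sharp $C\mu\rho^{N-1}$ bound for almost every $\rho$.
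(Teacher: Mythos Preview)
Your Caccioppoli strategy has the right shape, and the computation on the portion $\{u_\varepsilon\le\mu^\alpha\}\cap\partial B_\rho$ is fine, but the saturation obstacle you flag is real and your proposed fix does not close it. After averaging in $\rho$ over a shell $[\rho_0,2\rho_0]$, the offending boundary term becomes
\[
\mu^{2-\alpha}\int_{(B_{2\rho_0}\setminus B_{\rho_0})\cap\{u_\varepsilon\ge\mu^\alpha\}}|\nabla u_\varepsilon|\,dX
\;\le\; C\,\mu^{2-\alpha}\rho_0^{\alpha-1}\cdot\bigl|\{u_\varepsilon\ge\mu^\alpha\}\cap B_{2\rho_0}\bigr|,
\]
and to recover $C\mu\rho_0^{N}$ you would need $\bigl|\{u_\varepsilon\ge\mu^\alpha\}\cap B_{2\rho_0}\bigr|\lesssim (\mu/\rho_0)^{\alpha-1}\rho_0^{N}$. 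Nothing in Section~\ref{Section Nondeg} gives this: Corollary~\ref{corollarydensity} is a \emph{lower} bound on the density of $\{u_\varepsilon>\varepsilon^\alpha\}$, not an upper bound on the volume of a super-level set. Such an upper bound is essentially the content of Lemma~\ref{principallemma}, which is proved \emph{using} the present lemma, so invoking it here is circular. Without it, the trivial volume bound $C\rho_0^{N}$ leaves you with the same lossy factor $(\rho_0/\mu)^{\alpha-1}$ you started with.

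The paper sidesteps the issue by changing the subharmonic function: instead of $u_\varepsilon$ (Remark~\ref{obs1}) it works with $w:=u_\varepsilon^{1/\alpha}$. The identity $1/\alpha-1=-\gamma/2$ gives $\nabla w=\tfrac{1}{\alpha}u_\varepsilon^{-\gamma/2}\nabla u_\varepsilon$, so Theorem~\ref{regularitythm} says precisely that $|\nabla w|\le C$, and moreover $f_{ij}w_iw_j\simeq u_\varepsilon^{-\gamma}|\nabla u_\varepsilon|^{2}$ is already the integrand you want. Establishing $f_{ij}D_{ij}w\ge 0$ requires more than Remark~\ref{obs1} (the second-order chain-rule term has the wrong sign); the paper handles this by applying Theorem~\ref{regularitythm} to $u_\varepsilon\circ A$ for a linear map $A$ with $\mathcal{F}=\tfrac{1}{C}AA^{T}$, so that the gradient term is exactly absorbed. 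Once this is in place, one truncates $\Phi:=\min\{\mu,\max\{w,\,C_1^{1/\alpha}\varepsilon\}\}$ and integrates by parts: the boundary term is $\int_{\partial B_\rho}\Phi\,f_{ij}w_i\nu_j\,d\mathcal{H}^{N-1}$, and since $\Phi\le\mu$ and $|\nabla w|\le C$ \emph{everywhere}, it is immediately $\le C\mu\rho^{N-1}$ with no saturation region to worry about.
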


\begin{proof}
The proof starts off by verifying that for $\varepsilon$ and $\rho$ universally small, the following differential inequality holds:
\begin{equation}\label{A11}
\sum_{ij}f_{ij}D_{ij}(u_\varepsilon^{\frac{1}{\alpha}})\geq 0 \quad \mbox{ in }  \{u_\varepsilon>\varepsilon^\alpha\}\cap B_\rho(X_0).
\end{equation}
where $\mathcal{F}=\left(f_{ij}\right)_{ij}$, as in Definition $\eqref{AL cond on F}$.
To show such an estimate, we argue as follows: fix a non-singular linear operator $A \colon \mathbb{R}^N \to \mathbb{R}^N$. We compute
\begin{equation}\label{Aoperator}
\begin{array}{ccl}
\displaystyle \displaystyle\sum_{ij}f_{ij}D_{ij}(u_\varepsilon^{\frac{1}{\alpha}}) & = & \displaystyle \dfrac{1}{\alpha}u_\varepsilon^{\frac{1}{\alpha}-1}\tr\left(A^{-1}\mathcal{F}(A^{-1})^TD^2(u_\varepsilon\circ A)\right)\circ A^{-1} \\
 & + & \displaystyle \dfrac{1}{\alpha}\left(\dfrac{1}{\alpha}-1\right)u_\varepsilon^{\frac{1}{\alpha}-2}\tr(A^{-1}\mathcal{F}(A^{-1})^T\nabla(u_\varepsilon\circ A)\otimes \nabla(u_\varepsilon\circ A))\circ A^{-1}.
\end{array}
\end{equation}
In addition, $u_\varepsilon\circ A$ solves the following uniform elliptic fully nonlinear equation
\[
	F_A(D^2 v)=\gamma v^{\gamma-1},
\]
where the operator $F_A$ is given by 
$$
	F_A(\mathcal{M}) :=F((A^{-1})^T \mathcal{M} A^{-1}).
$$ 
Easily one verifies that $F_A$ is in fact uniformly elliptic, with the same ellipticity constants as $F$. Thus, by optimal gradient bounds, we obtain
\[
	|\nabla(u_\varepsilon\circ A)|^2\leq C(u_\varepsilon\circ A)^{\gamma}.
\]
and so, 
\begin{equation}\label{Areg}
	\dfrac{1}{\alpha}\left(\dfrac{1}{\alpha}-1\right)|\nabla(u_\varepsilon\circ A)|^2\geq \dfrac{C}{\alpha}\left(\dfrac{1}{\alpha}-1\right)(u_\varepsilon\circ A)^{\gamma}.
\end{equation}
From the structural assumption  \eqref{hip}, 
\begin{equation}\label{A}
\begin{array}{ccl}
\tr((A^{-1})^T\mathcal{F}A^{-1}D^2(u_\varepsilon\circ A)) & \geq & F((A^{-1})^TD^2(u_\varepsilon\circ A)A^{-1})-C_F \\
 & & \\
 & \geq & F((D^2u_\varepsilon)\circ A)-C_F \\
  & & \\
 & \geq & \gamma (u_\varepsilon\circ A)^{\gamma-1}-C_F .
\end{array}
\end{equation}
Thus, if we select  $A$ as to satisfies 
$$
	\mathcal{F}=\frac{1}{C}\,AA^T
$$ 
where $C>0$ is the constant of inequality \eqref{Areg}, and  combine $\ref{Areg}$, $\ref{A}$ and $\ref{Aoperator}$, we end up with
\[
\sum_{ij}f_{ij}D_{ij}(u_\varepsilon^{\frac{1}{\alpha}})\geq \dfrac{1}{\alpha}u_\varepsilon^{\frac{1}{\alpha}-1}\left(\dfrac{\gamma}{2}u_\varepsilon^{\gamma-1}-C_F\right).
\]
Finally from Remark \ref{obs1}, we deduce that for $\varepsilon$ and $\rho$ universally small, the differential inequality \eqref{A11} indeed holds true. 
\par
We now continue with the proof of Lemma \ref{hausdorff1}. Define the following cut off function,
\[
\Phi=\left\{
\begin{array}{ccl}
\varepsilon\sqrt[\alpha]{C_1} & \mbox{in} & \{u_\varepsilon\leq C_1\varepsilon^\alpha\}; \\
u_{\varepsilon}^{\frac{1}{\alpha}} & \mbox{in} & \{C_1\varepsilon^\alpha< u_\varepsilon\leq\mu^\alpha\}; \\
\mu & \mbox{in} & \{u_\varepsilon>\mu^\alpha\}.
\end{array}
\right.
\]
Clearly we have
\begin{equation}\label{edu-eq-ins001}
	\displaystyle \int\limits_{\{C_1\varepsilon^\alpha< u_\varepsilon\leq\mu^\alpha\}\cap B_\rho(X_0)} f_{ij}	(u_\varepsilon^{\frac{1}{\alpha}})_i \cdot (u_\varepsilon^{\frac{1}{\alpha}})_j ~ dX = \displaystyle \int\limits_{B_\rho(X_0)}f_{ij}\Phi_i(u_\varepsilon^{\frac{1}{\alpha}})_j ~dX.
\end{equation}
Standard integrations by parts yield
\begin{equation}\label{edu-eq-ins002}
\begin{array}{lll}
\displaystyle \int\limits_{B_\rho(X_0)}f_{ij}\Phi_i(u_\varepsilon^{\frac{1}{\alpha}})_j ~ dX &=& \displaystyle -\int\limits_{B_\rho(X_0)}\Phi \cdot f_{ij} (u_\varepsilon^{\frac{1}{\alpha}})_{ij} ~ dX  \\
&+& \displaystyle \frac{1}{\rho}\int\limits_{\partial B_\rho(X_0)}f_{ij}\phi(u_\varepsilon^{\frac{1}{\alpha}})_i\cdot(x^j-x_0^j)\,d\mathcal{H}^{N-1}.
\end{array}
\end{equation}
Therefore, from the differential inequality established in \eqref{A11}, we conclude
\[
\int\limits_{\{C_1\varepsilon^\alpha< u_\varepsilon\leq\mu^\alpha\}\cap B_\rho(X_0)}f_{ij}(u_\varepsilon^{\frac{1}{\alpha}})_i(u_\varepsilon^{\frac{1}{\alpha}})_j dX
 \leq \frac{1}{\rho}\int\limits_{\partial B_\rho(X_0)} \Phi \cdot f_{ij} (u_\varepsilon^{\frac{1}{\alpha}})_i\cdot(x^j-x_0^j)\,d\mathcal{H}^{N-1}.
\]
Passing the derivatives through, we can further write the above estimate as
\[
\int\limits_{\{C_1\varepsilon^\alpha< u_\varepsilon\leq\mu^\alpha\}\cap B_\rho(X_0)}f_{ij} u_\varepsilon^{-\gamma}D_iu_\varepsilon D_ju_\varepsilon\, dx
\leq \frac{\alpha}{\rho}\int\limits_{\partial B_\rho(X_0)}\Phi\cdot f_{ij}u_\varepsilon^{-\frac{\gamma}{2}}D_iu_\varepsilon\cdot(x^j-x_0^j)d\mathcal{H}^{N-1}.
\]
Finally, from uniform ellipticity and optimal regularity of $u_\varepsilon$, we derive
\[
\int\limits_{\{C_1\varepsilon^\alpha< u_\varepsilon\leq\mu^\alpha\}\cap B_\rho(X_0)}u^{-\gamma}|\nabla u|^2 dX
\leq C\mu\rho^{N-1},
\]
as desired.
\end{proof}

For the next result, let is recall the following classical notation: given a set  $G \subset \mathbb{R}^N$, we will denote
\[
\mathcal{N}_\delta(G):=\{X\in \mathbb{R}^N \mid \dist(X,G)<\delta\}.
\]

In the sequel we show the main step towards uniform bounds of the $\mathcal{H}^{N-1}$-Hausdorff measure of the level-surfaces $\{u_\varepsilon > \varepsilon^\alpha\}$.

\begin{lemma}\label{principallemma}
Fixed $\Omega'\Subset\Omega$, there exists a constant $C^\star$ that depends only on $\Omega'$ and universal parameters such that if,
\[
C^\star\mu \leq 2\rho \leq \dfrac{\dist(\Omega',\partial\Omega)}{10}
\]
then, for $\mu,\varepsilon>0$ universally small and $\mu \ll \rho$, for $\rho$ also universally small, we have
\[
 \Leb \left ( \{C_1\varepsilon^\alpha<u_\varepsilon<\mu^\alpha \}\cap B_\rho(X_0) \right ) \leq \bar{C}\mu\rho^{N-1},
\]
where again $\bar{C}=\bar{C}(\Omega')$ depends only on $\Omega'$ and universal constants and $X_0\in\Omega'\cap\partial\{C_1\varepsilon^\alpha<u_\varepsilon<\mu^\alpha\}$, with 
$d_\varepsilon(X_0)\leq \frac{\dist(\Omega',\partial\Omega)}{10}$ and $C_1>1$.
\end{lemma}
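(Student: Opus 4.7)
The proof is driven by Lemma \ref{hausdorff1}. Setting $v := u_\varepsilon^{1/\alpha}$ (recall $1/\alpha = (2-\gamma)/2$) and $E := \{C_1\varepsilon^\alpha < u_\varepsilon < \mu^\alpha\} \cap B_\rho(X_0)$, a direct computation gives $|\nabla v|^2 = \alpha^{-2}u_\varepsilon^{-\gamma}|\nabla u_\varepsilon|^2$, so the estimate of that lemma rewrites as
\[
	\int_{E} |\nabla v|^2\, dX \leq C\mu\rho^{N-1}.
\]
The plan is to convert this integrated gradient control into a volume bound on $E$ via a Vitali-type covering together with a universal local lower bound for $\int_{B_r}|\nabla v|^2$ on balls of scale $r \simeq \mu$. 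The main tools are the comparability $v(X) \simeq d_\varepsilon(X)$ on $\{u_\varepsilon > \varepsilon^\alpha\}$ (from Theorem \ref{lemma Growth} and Corollary \ref{Growth col}), the Harnack inequality for tangential balls, and the uniform Lipschitz estimate of Theorem \ref{regularitythm}.

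For each $X \in E$, the comparability $v \simeq d_\varepsilon$ together with $C_1\varepsilon^\alpha < u_\varepsilon(X) < \mu^\alpha$ gives $c\varepsilon \leq d_\varepsilon(X) \leq C\mu$. Apply Vitali's $5r$-covering lemma to $\{B_{d_\varepsilon(X)/10}(X)\}_{X\in E}$ to extract a countable pairwise disjoint sub-family $\{B_j := B_{r_j}(X_j)\}$ with $r_j = d_\varepsilon(X_j)/10$, whose quintuples $\{5B_j\}$ cover $E$. The hypothesis $C^\star\mu \leq 2\rho$ ensures $\bigcup_j 10B_j \subset B_{2\rho}(X_0)$ for $C^\star$ chosen appropriately. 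Moreover, if $10B_j \cap 10B_k \neq \emptyset$, the triangle inequality for $d_\varepsilon$ forces $r_j \simeq r_k$, so the enlarged covering $\{10B_j\}$ has universally bounded multiplicity $M = M(N)$.

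The main technical step is the universal local lower bound: there exist universal constants $c_0, K > 0$ such that
\[
	\int_{10B_j \cap \{u_\varepsilon < K\mu^\alpha\}} |\nabla v|^2\, dX \geq c_0\, r_j^N.
\]
This is proven by rescaling and compactness. The normalized function $\tilde v_j(Y) := r_j^{-1} v(X_j + r_j Y)$ is uniformly Lipschitz on $B_{10}$ (by Theorem \ref{regularitythm}) and satisfies $\tilde v_j(0) \simeq 1$ (by Harnack for tangential balls applied on $B_{5r_j}(X_j) \subset \{u_\varepsilon > \varepsilon^\alpha\}$). On the other hand, by Theorem \ref{lemma SN} there exists a free boundary point $Z_j \in \partial\{u_\varepsilon > \varepsilon^\alpha\}$ with $|X_j - Z_j| \simeq r_j$, so after rescaling $\tilde v_j(\tilde Z_j)\leq r_j^{-1}\varepsilon = o(1)$ since $\varepsilon \ll \mu \simeq r_j$. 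Any subsequential uniform limit $\tilde v_\infty$ is therefore a nonconstant Lipschitz function on $B_{10}$; by lower semicontinuity of the Dirichlet energy under weak $L^2$ convergence of $\nabla \tilde v_j$, one obtains $\liminf \int_{B_{10}} |\nabla \tilde v_j|^2 \geq \int_{B_{10}}|\nabla\tilde v_\infty|^2 > 0$, and a standard contradiction argument over all admissible profiles produces a uniform positive constant $c_0$. Unscaling back by the factor $r_j^{N-2}$ (and $r_j^{-2}$ from $|\nabla|^2$) yields the claimed bound. This is the hard part, where the interplay between Harnack (keeping $v$ away from zero on $B_j$) and nondegeneracy (forcing $v$ to vanish near a free boundary point at controlled distance) is essential to excluding constant limits.

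Summing over the disjoint family $\{B_j\}$ and using the bounded multiplicity of $\{10B_j\}$ together with a reapplication of Lemma \ref{hausdorff1} on the enlarged region (with $\mu$ replaced by $K^{1/\alpha}\mu$ and $\rho$ by $2\rho$), one concludes
\[
	\Leb(E) \leq \sum_j \Leb(5B_j) \lesssim \sum_j r_j^N \leq c_0^{-1}\sum_j \int_{10B_j\cap E'} |\nabla v|^2 \leq c_0^{-1} M \int_{E'}|\nabla v|^2 \leq \bar C \mu\rho^{N-1},
\]
where $E' := \{C_1\varepsilon^\alpha < u_\varepsilon < K\mu^\alpha\}\cap B_{2\rho}(X_0)$, establishing the lemma with $\bar C$ depending only on $\Omega'$ and universal parameters.
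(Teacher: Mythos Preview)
Your overall strategy---cover $E$, establish a local lower bound $\int|\nabla v|^2 \gtrsim r_j^N$ on each piece, and sum against Lemma~\ref{hausdorff1}---matches the paper's in spirit. However, your choice of a \emph{variable-radius} Vitali covering with $r_j = d_\varepsilon(X_j)/10$ introduces two genuine gaps.

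First, the assertion ``$\varepsilon \ll \mu \simeq r_j$'' is incorrect: for $X_j \in E$ one only has $c\,\varepsilon \le r_j \le C\mu$, and the lower end is realized whenever $u_\varepsilon(X_j)$ is close to $C_1\varepsilon^\alpha$. Your compactness step hinges on $\tilde v_j(\tilde Z_j) = \varepsilon/r_j = o(1)$, which fails for such balls. (This particular point is repairable: the oscillation of $\tilde v_j$ on $B_{10}$ is in fact universally bounded below, by combining $\tilde v_j(0)\ge 10c_0^{1/\alpha}$ from Theorem~\ref{lemma Growth} with $\tilde v_j(0) > C_1^{1/\alpha}\tilde v_j(\tilde Z_j)$ from $u_\varepsilon(X_j)>C_1\varepsilon^\alpha$; but that is not the argument you wrote.)

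Second, and more seriously, the bounded-multiplicity claim for $\{10B_j\}$ does not follow from the triangle inequality for $d_\varepsilon$ and is in fact false at this enlargement ratio. Since $10r_j = d_\varepsilon(X_j)$, each $10B_j$ is \emph{tangent} to $\partial\{u_\varepsilon>\varepsilon^\alpha\}$; the standard Whitney overlap argument requires the dilated balls to stay strictly inside the open set and breaks down at this borderline. Concretely, if the approximate free boundary were flat, a Vitali-selected tower of centers along the normal direction at heights $h_j \approx 2^{-j}\mu$ (down to $\approx \varepsilon$) gives disjoint $B_j$'s, yet a point at height $\approx \varepsilon$ lies in $\approx \log(\mu/\varepsilon)$ of the $10B_j$'s. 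Your summation then yields only $\Leb(E)\lesssim \log(\mu/\varepsilon)\,\mu\rho^{N-1}$, which is not uniform in $\varepsilon$.

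The paper avoids both issues by covering instead the \emph{level set} $\partial\{u_\varepsilon > C_1\varepsilon^\alpha\}\cap B_\rho(X_0)$ with balls of a single \emph{fixed} radius $C^\star\mu$. Bounded overlap is then automatic. On each such ball, strong nondegeneracy (Theorem~\ref{lemma SN}) produces a sub-ball where $u_\varepsilon \ge \mu^\alpha$ (so $\Phi=\mu$), while Corollary~\ref{sharp col} gives a sub-ball near the center where $u_\varepsilon < \tfrac{2}{3}\mu^\alpha$; this clean $\mu$-scale separation feeds directly into Poincar\'e to give $\int_{B_j}|\nabla\Phi|^2 \gtrsim \mu^N$, with no compactness needed. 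Nondegeneracy then forces $E\subset \bigcup 2B_j$, and summing against Lemma~\ref{hausdorff1} on $B_{4\rho}$ concludes. The fixed-radius choice is precisely what makes the overlap and the separation work simultaneously.
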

\begin{proof}
Let $\{B_j\}$ be a finite family of balls covering $\partial\{C_1\varepsilon^\alpha<u_\varepsilon\}\cap B_{\rho}(X_0)$, with radius constant equal to $C^\star\mu$ and center $X_j\in \partial\{C_1\varepsilon^\alpha<u_\varepsilon\}\cap B_{\rho}(X_0)$, where $C^\star$ will be chosen \textit{a posteriori}. By Heine-Borel Lemma, there exists a universal constant $m$ such that
\[
\sum\limits_j \chi_{B_j}\leq m.
\]
We can assure that
\[
\bigcup_j B_j \subset \left[\mathcal{N}_{\frac{d}{8}}(\Omega')\cap B_{4\rho}(X_0)\right].
\]
where $d:=\dist(\Omega',\partial\Omega)$. As in the proof of Lemma \ref{hausdorff1},  we consider
\[
\Phi=\left\{
\begin{array}{ccl}
\varepsilon\sqrt[\alpha]{C_1} & \mbox{in} & \{u_\varepsilon\leq C_1\varepsilon^\alpha\}; \\
u_{\varepsilon}^{\frac{1}{\alpha}} & \mbox{in} & \{C_1\varepsilon^\alpha< u_\varepsilon\leq\mu^\alpha\}; \\
\mu & \mbox{in} & \{u_\varepsilon>\mu^\alpha\}.
\end{array}
\right.
\]
We now claim that is possible to find, for each $j$, balls $B_j^1$ and  $B_j^2$ both contained in $B_j$, satisfying:
\begin{description}
\item[(1)] the radius of $B_j^1$ and  $B_j^2$ are in order $\mu$ (up to  universal contraction)
\item[(2)] $\Phi\geq \sqrt[\alpha]{\dfrac{3}{4}}\mu$ in $B_j^1$ and $\Phi\leq \sqrt[\alpha]{\dfrac{2}{3}}\mu$ in $B_j^2$.
\end{description}

To show the above claim, we argue as follows:  take $X_1\in \frac{1}{4}\overline{B_j}$, such that 
$$
	u_\varepsilon(X_1)=\sup\limits_{\frac{1}{4}B_j}u_\varepsilon.
$$ 
By strong nondegeneracy, 
\[
u_\varepsilon(X_1)\geq c_0\left(\frac{C^\star\mu}{4}\right)^\alpha \ge C_2 \mu^\alpha,
\]
if $C^\star \gg 1$ is chosen universally large enough, where $C_2>0$ is a constant obteined from Theorem \ref{regularitythm}. This last theorem, given $X\in B_j$, we obtain
\begin{equation}\label{h2}
u_\varepsilon (X)  \geq  \frac{1}{C_2}u(X_1)-|X-X_1|^\alpha
  \geq  \mu^\alpha - |X-X_1|^\alpha.
\end{equation}
Taking 
\[
|X-X_1|<\sqrt[\alpha]{\frac{1}{4}}\mu,
\]
we obtain
\[
\Phi^\alpha(X)=u_\varepsilon(X)\geq \frac{3}{4}\mu^\alpha \quad \mbox{in } B^1_j:=B_{r^1_j}(X_1)
\]
where $r^1_j:=\sqrt[\alpha]{\frac{1}{4}}$ is a universal constant. To finish up the proof of this first statement, we just choose $C^\star$ large enough as to 
\[
\tilde{C_1}\ll C^\star \quad \Longrightarrow \quad B_j^1\subset B_j.
\]
Notice again that such a selection is universal. Similarly, for $B_j^2:=B_{r^2_j}(X_j)$ where $r^2_j:=\tilde{C_2}\mu\ll C^\star\mu$, we have
\[
\Phi(X)^\alpha=u_\varepsilon \leq \frac{2}{3}\mu^\alpha.
\]
From property $(2)$ proven above, assures the existence of a universal constant $\kappa > 0$ such that, for each $j \in \mathbb{N}$ 
\[
|\Phi-m_j|>\kappa\mu,
\]
in at least one the two balls $B_j^1,B_j^2\subset B_j$, where 
$$
	m_j:= \intav{B_j} \Phi(X) dX.
$$
Thus, by classical Poincar\'e inequality in balls, we derive
\[
\kappa^2\mu^ 2\leq \displaystyle\frac{1}{|B_j|}\int_{B_j}|\Phi-m_j|^2 dX \leq C_3\mu^ 2\displaystyle\frac{1}{|B_j|}\int_{B_j}|\nabla\Phi|^2 dX,
\]
which in turn gives
\[
\int_{\{C_1\varepsilon^\alpha<u_\varepsilon<\mu^\alpha \}\cap B_j}u_\varepsilon^{-{\gamma}}|\nabla u_\varepsilon|^2 dx\geq C_4|B_j|.
\]
In addition, by  nondegeneracy,  for all  $Y\in \{C_1\varepsilon^\alpha<u_\varepsilon<\mu^\alpha \}\cap B_\rho(X_0)$, we have
\[
C_5d_\varepsilon(Y)^\alpha \leq u_\varepsilon(Y) \leq \mu^\alpha.
\]
Hence
\[
\{C_1\varepsilon^\alpha<u_\varepsilon<\mu^\alpha \} \cap B_\rho(X_0) \subset \mathcal{N}_{\frac{1}{C_6}\mu}\left(\partial\{C_1\varepsilon^\alpha<u_\varepsilon\}\cap B_{2\rho}(X_0)\right),
\]
for $C_6=\sqrt[\alpha]{C_5}$. Thus, for $\mu\ll \rho$, and $C^\star \gg 1$, both universal, we reach
\[
\{C_1\varepsilon^\alpha<u_\varepsilon<\mu^\alpha \} \cap B_\rho(X_0) \subset 
 \bigcup 2B_j \subset B_{4\rho}(X_0).
\]
Finally,  applying Lemma \eqref{hausdorff1}  and taking into account the inclusion above, we estimate
\[
\begin{array}{ccl}
C_7\,\mu\rho^{N-1} & \geq & \displaystyle \int_{B_{4\rho}(X_0)\cap\{C_1\varepsilon^\alpha<u_\varepsilon<\mu^\alpha \}}u_\varepsilon^{-{\gamma}}|\nabla u_\varepsilon|^2 dx\\
 & & \\
  & \geq & \dfrac{1}{m}\displaystyle\sum\int_{2B_j\cap\{C_1\varepsilon^\alpha<u_\varepsilon<\mu^\alpha \}}u_\varepsilon^{-{\gamma}}|\nabla u_\varepsilon|^2 dx \\
  & & \\
  & \geq & \displaystyle\frac{C_4}{m}\sum|B_j| \\
  & & \\
  & \geq & \displaystyle\frac{C_4}{m}|B_\rho(X_0)\cup \{C_1\varepsilon^\alpha<u_\varepsilon<\mu^ \alpha\}|,
\end{array}
\]
where $C_7$ and $C_4$ are universal constants, which completes the proof of Lemma.
\end{proof}

In the sequel, we recall the definition of $\delta$-density.
\begin{definition}
Given an open subset $G$ of $\mathbb{R}^N$, we say that $G$ has the $\delta-$\textit{density property} in $\Omega$ for $0<\delta<1$, if there exists $\tau>0$ such that
\[
\dfrac{ \Leb \left ( B_\delta(X)\cap G \right ) }{ \Leb \left ( B_\delta(X) \right )}\geq \tau
\]
for all $X\in \partial G\cap \Omega$. If the property above is valid for any $0<\delta<1$, we say that $G$ has \textit{uniform density in}  $\Omega$ \textit{along} $\partial G$.
\end{definition}

Here we state a, by now, classical result from measure theory.
\begin{lemma}\label{lemmacited}
Given an open set $A\Subset \Omega$, there holds:
\begin{description}
\item [a)] If there exists $\delta$ such that $A$ has the $\delta-$density property, then there exists a constant $C=C(\tau,N)$, where:
\[
|\mathcal{N}_\delta(\partial A)\cap B_\rho(X)|\leq \frac{1}{2^N\tau}|\mathcal{N}_\delta(\partial A)\cap B_\rho(X)\cap A|+C\delta\rho^{N-1}
\]
with $X\in\partial A\cap\Omega$ and $\delta \ll \rho$.
\item [b)] If $A$ has uniform density in $\Omega$ along $A$, then $|\partial A\cap \Omega|=0$.
\end{description}
\end{lemma}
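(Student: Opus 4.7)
\textbf{Proof plan for Lemma \ref{lemmacited}.}

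For part (a), the plan is a standard Vitali-type covering argument driven by the $\delta$-density hypothesis. Writing $S := \mathcal{N}_\delta(\partial A) \cap B_\rho(X)$, I would first extract a maximal disjoint family $\{B_{\delta/2}(X_i)\}_{i\in I}$ with centers $X_i \in \partial A \cap B_{\rho+\delta}(X)$. By maximality, $\{B_\delta(X_i)\}$ covers $\partial A \cap B_{\rho+\delta}(X)$, hence $\{B_{2\delta}(X_i)\}$ covers $S$ (given a point $Y\in S$, a nearest $Y'\in\partial A$ satisfies $Y'\in B_\delta(X_i)$ for some $i$, so $|Y-X_i|<2\delta$). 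The $\delta$-density property applied to each $X_i$ gives $|B_\delta(X_i)\cap A|\ge \tau\omega_N\delta^N$. Summing over $I$, and using that the $B_{\delta/2}(X_i)$ are disjoint so that the $B_\delta(X_i)$ have overlap bounded by a dimensional constant, yields
\[
|I|\,\tau\omega_N\delta^N \;\le\; C_N\,\bigl|\mathcal{N}_\delta(\partial A)\cap A\cap B_{\rho+2\delta}(X)\bigr|.
\]
Splitting the right-hand side into its intersection with $B_\rho(X)$ and the thin annulus $B_{\rho+2\delta}(X)\setminus B_\rho(X)$ (whose Lebesgue measure is $\lesssim \delta\rho^{N-1}$), and bounding $|S|\le \sum_i |B_{2\delta}(X_i)| = 2^N\omega_N\delta^N\,|I|$, produces the asserted inequality with a constant of the form $C(N)/\tau$; a sharper choice of covering radii ties down the precise coefficient $\frac{1}{2^N\tau}$ stated in the lemma.

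For part (b), the plan is to send $\delta\downarrow 0$ in the inequality of (a). Fix $X\in\partial A\cap\Omega$ and choose $\rho>0$ so small that $B_\rho(X)\Subset\Omega$. The uniform density hypothesis makes (a) available for every sufficiently small $\delta$. The open sets $\mathcal{N}_\delta(\partial A)$ decrease as $\delta\downarrow 0$ and their intersection equals $\partial A$ (which is closed). Monotone convergence therefore gives
\[
\bigl|\mathcal{N}_\delta(\partial A)\cap B_\rho(X)\bigr|\to|\partial A\cap B_\rho(X)|,\qquad \bigl|\mathcal{N}_\delta(\partial A)\cap A\cap B_\rho(X)\bigr|\to |\partial A\cap A\cap B_\rho(X)|=0,
\]
the last equality holding because $A$ is open, so $\partial A\cap A=\emptyset$. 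Passing to the limit in (a) forces $|\partial A\cap B_\rho(X)|=0$. A Lindel\"of covering of $\partial A\cap\Omega$ by countably many balls of this form then concludes $|\partial A\cap\Omega|=0$.

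The main technical obstacle lies in part (a): one must carefully track the interplay between the radii of the covering family and of the family on which density is applied, together with the thin-annulus overflow giving the $C\delta\rho^{N-1}$ correction, in order to land on the precise coefficient $\frac{1}{2^N\tau}$ rather than a looser dimensional constant times $\tau^{-1}$. Once (a) is in hand, part (b) is essentially a one-line consequence of monotone convergence and the openness of $A$.
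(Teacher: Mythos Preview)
The paper does not prove this lemma; it is introduced as ``a, by now, classical result from measure theory'' and stated without argument, so there is no in-paper proof to compare against. Your plan is the standard route and is correct for both parts: the Vitali-type maximal disjoint family combined with the $\delta$-density lower bound handles~(a), and monotone convergence together with $\partial A\cap A=\emptyset$ (since $A$ is open) handles~(b).

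One caveat on (a): with the radii you chose --- disjoint $B_{\delta/2}$-balls, density applied on $B_\delta$-balls, and $S$ covered by $B_{2\delta}$-balls --- you obtain a leading coefficient of the form $C_N/\tau$, and no ``sharper choice of covering radii'' will bring this down to the stated $1/(2^N\tau)$. In fact for $\tau>2^{-N}$ that coefficient is strictly less than~$1$, while the left-hand side always dominates the $A$-restricted term on the right, so the constant $1/(2^N\tau)$ as printed is almost certainly a typo (likely for $2^N/\tau$ or a generic $C(N)/\tau$). This is harmless for the paper: the only place Lemma~\ref{lemmacited} is invoked, namely the proof of Theorem~\ref{haus1}, uses it with a generic universal constant in front of the $A$-term, and your $C_N/\tau$ version is exactly what is needed there. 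So drop the claim that the precise $1/(2^N\tau)$ can be recovered and simply record the inequality with a constant $C(N)/\tau$; everything downstream goes through unchanged.
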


We are ready to state and prove the main result on this section. 

\begin{theorem}\label{haus1}
Given $\Omega'\Subset\Omega$ there exists a universal constant $C=C(\Omega')>0$, such that
\[
 \Leb \left ( \mathcal{N}_{\mu}(\{C_1\varepsilon^ \alpha< u_\varepsilon\})\cap B_\rho(X_0) \right ) \leq C\mu\rho^{N-1},
\]
whenever, $C_1>1$, $X_0\in\Omega'\cap\partial\{C_1\varepsilon^ \alpha<u_\varepsilon\}$, $d_\varepsilon(X_0)<\frac{1}{10}\dist(\Omega',\partial\Omega)$, $\mu\ll \rho$ with $\rho$ universally small and $C_1\varepsilon^\alpha<\mu^\alpha$. In particular,
\[
\mathcal{H}^{N-1}(\partial\{C_1\varepsilon^ \alpha<u_\varepsilon\}\cap B_\rho(X_0))\leq C\rho^{N-1}.
\]

\end{theorem}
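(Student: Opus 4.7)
Write $A_\varepsilon := \{C_1 \varepsilon^\alpha < u_\varepsilon\}$. The strategy is to decompose the $\mu$-neighborhood into its intersection with $A_\varepsilon$ and its complement, bound the interior piece by Lemma \ref{principallemma}, and use the uniform density of $A_\varepsilon$ (Corollary \ref{corollarydensity}) to transfer the bound to the exterior piece via Lemma \ref{lemmacited}(a).

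More precisely, fix $Y \in \mathcal{N}_{\mu}(\partial A_\varepsilon) \cap B_\rho(X_0) \cap A_\varepsilon$ and pick $Z \in \partial A_\varepsilon$ with $|Y - Z| \le \mu$; then $u_\varepsilon(Z) = C_1 \varepsilon^\alpha < \mu^\alpha$, and Corollary \ref{sharp col} yields
\[
u_\varepsilon(Y) \le u_\varepsilon(Z) + C\,u_\varepsilon(Z)^{\gamma/2}\mu + C\mu^\alpha \le \tilde{C}\,\mu^\alpha,
\]
for a universal $\tilde{C} > 0$. Hence the interior piece is contained in $\{C_1 \varepsilon^\alpha < u_\varepsilon < \tilde{C}\mu^\alpha\} \cap B_\rho(X_0)$, and Lemma \ref{principallemma} (applied after rescaling $\mu \mapsto \tilde{C}^{1/\alpha}\mu$) delivers
\[
\Leb\bigl(\mathcal{N}_{\mu}(\partial A_\varepsilon) \cap B_\rho(X_0) \cap A_\varepsilon\bigr) \le C_1 \mu \rho^{N-1}.
\]
For the exterior piece, Corollary \ref{corollarydensity} tells us that $A_\varepsilon$ enjoys the $\mu$-density property along $\partial A_\varepsilon$ as soon as $\mu \gg \varepsilon$. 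Therefore, Lemma \ref{lemmacited}(a) with $\delta = \mu$ gives
\[
\Leb\bigl(\mathcal{N}_\mu(\partial A_\varepsilon) \cap B_\rho(X_0)\bigr) \le \frac{1}{2^N \tau} \Leb\bigl(\mathcal{N}_\mu(\partial A_\varepsilon) \cap B_\rho(X_0) \cap A_\varepsilon\bigr) + C_2 \mu \rho^{N-1},
\]
and combining the two displays proves the first assertion of the Theorem.

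For the Hausdorff content bound, I would run the standard Minkowski content argument: by a Vitali/Besicovitch-type covering of $\partial A_\varepsilon \cap B_\rho(X_0)$ by balls $\{B_\mu(X_i)\}_{i=1}^M$ of radius $\mu$ with bounded overlap, the union $\bigcup_i B_\mu(X_i) \subset \mathcal{N}_\mu(\partial A_\varepsilon) \cap B_{2\rho}(X_0)$, so the volume estimate just established forces $M \omega_N \mu^N \le C \mu \rho^{N-1}$, i.e.\ $M \le C' \rho^{N-1} \mu^{-(N-1)}$. It follows that
\[
\mathcal{H}^{N-1}_\mu(\partial A_\varepsilon \cap B_\rho(X_0)) \le M \omega_{N-1} \mu^{N-1} \le C \rho^{N-1},
\]
and passing to the limit $\mu \to 0^+$ closes the proof. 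The main delicate point is arranging the constants so that the interior analysis (which requires $\tilde{C}\mu^\alpha$ still to lie in the regime where Lemma \ref{principallemma} applies) is compatible with the density hypothesis (which requires $\mu \gg \varepsilon$); both are guaranteed by the standing smallness assumptions $C_1 \varepsilon^\alpha < \mu^\alpha$ and $\mu \ll \rho$.
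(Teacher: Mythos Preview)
Your proposal is correct and follows essentially the same route as the paper's own proof: both arguments (i) show the interior piece $\mathcal{N}_\mu(\partial A_\varepsilon)\cap B_\rho(X_0)\cap A_\varepsilon$ is contained in a sub-level set $\{C_1\varepsilon^\alpha<u_\varepsilon<D\mu^\alpha\}$ via Corollary~\ref{sharp col} and bound it with Lemma~\ref{principallemma}, (ii) invoke the uniform density from Corollary~\ref{corollarydensity} to apply Lemma~\ref{lemmacited}(a), and (iii) conclude the $\mathcal{H}^{N-1}$ estimate by a standard Minkowski-content covering argument. The only cosmetic difference is the order in which steps (i) and (ii) are presented.
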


\begin{proof}
Take $\mu=\delta$, is as in the statement  of Corollary \eqref{corollarydensity}. We have,
\[
\frac{ \Leb \left ( B_\delta(X)\cap\{u_\varepsilon>C_1\varepsilon^\alpha\} \right ) }{\Leb \left ( B_\delta(X) \right )}\geq C_2,
\] 
for $X\in \partial \{u_\varepsilon>C_1\varepsilon^\alpha\}$. We conclude that, $\partial\{u_\varepsilon>C_1\varepsilon^\alpha\}$ has the $\delta-$density property, and by Lemma \ref{lemmacited}, for a universal constant $M>0$, there holds
\begin{equation}\label{set1}
\begin{array}{lll}
 \Leb \left ( \mathcal{N}_\delta(\partial\{u_\varepsilon>C_1\varepsilon^\alpha\})\cap B_\rho(X_0) \right )
&\leq& \dfrac{1}{2^NC_2} \Leb \left ( \mathcal{N}_\delta(\partial\{u_\varepsilon>C_1\varepsilon^\alpha\})\cap B_\rho(X_0)\cap \{u_\varepsilon>C_1\varepsilon^\alpha\} \right ) \\
  &+& M\delta\rho^{N-1}.
\end{array}
\end{equation}
From Theorem $\ref{regularitythm}$, given $Y\in \mathcal{N}_\delta(\partial\{u_\varepsilon>C_1\varepsilon^\alpha\})\cap B_\rho(X_0)\cap \{u_\varepsilon>C_1\varepsilon^\alpha\}$ and $Z\in \partial\{u_\varepsilon>C_1\varepsilon^\alpha\}$, we can estimate
\[
\begin{array}{ccl}
u(Y) & \leq & C_3(|Z-Y|^\alpha+u(Z)) \\
     & \leq & C_3(\delta^\alpha+\mu^\alpha)\\
     & \leq & D\mu^ \alpha,
\end{array}
\]
where the last inequality, follows from $C_1\varepsilon^\alpha<\mu$ and $\delta=C\mu$. We have verified there exists $D>0$ universal, such that
\begin{equation}\label{set2}
\mathcal{N}_\delta(\partial\{u_\varepsilon>C_1\varepsilon^\alpha\})\cap B_\rho(X_0)\cap \{u_\varepsilon>C_1\varepsilon^\alpha\}\,\subset\, \{C_1\varepsilon^\alpha<u_\varepsilon<D\mu^\alpha\}\cap B_\rho(X_0).
\end{equation}

Finally, from Lemma \eqref{principallemma}, we conclude 
\begin{equation}\label{set03}
\Leb \left ( \{C_1\varepsilon^\alpha<u_\varepsilon<D\mu^\alpha\}\cap B_\rho(X_0) \right ) \leq C_4\mu\rho^{N-1},
\end{equation}
thus, combing \eqref{set1}, \eqref{set2}  and \eqref{set03} we reach,
\[
\begin{array}{ccl}
\Leb \left ( \mathcal{N}_\mu(\{u_\varepsilon>C_1\varepsilon^\alpha\})\cap B_\rho(X_0) \right ) & \leq & C_4\,\mu\rho^{N-1}.
\end{array}
\]

To conclude the proof of the $\mathcal{H}^{N-1}$ Hausdorff measure estimate, let $\{B_j\}$ be a covering of $\partial\{C_1\varepsilon^ \alpha< u_\varepsilon\}\cap B_\rho(X_0)$, where each ball be centered in $\partial\{C_1\varepsilon^ \alpha< u_\varepsilon\}\cap B_\rho(X_0)$  with radius $\mu$. We can write
\[
\bigcup B_j\subset \mathcal{N}_{\mu}(\{C_1\varepsilon^ \alpha< u_\varepsilon\})\cap B_{\rho+\mu}(X_0).
\]

Thus there exist dimensional constants $C_5,C_6>0$, such that
\[
\begin{array}{ccl}
\mathcal{H}_\mu^{N-1}(\partial\{C_1\varepsilon^ \alpha<u_\varepsilon\}\cap B_\rho(X_0)) & \leq & C_5\sum \mbox{Area}(\partial B_j) \\
&=&\dfrac{C_5}{\mu}\sum \Leb \left ( B_j \right ) \\
 & & \\
 & \leq & \dfrac{C_6}{\mu} \Leb \left ( \mathcal{N}_{\mu}(\{C_1\varepsilon^ \alpha< u_\varepsilon\})\cap B_{\rho+\mu}(X_0)\right ) \\
 & & \\
 & \leq &  C_6C_4(\rho+\mu)^{N-1}=C_6C_4\,\rho^{N-1}+\text{o}(1). \\
\end{array}
\] 
Letting $\mu\to 0$, we finish the proof of the Theorem.
\end{proof}

\section{Limiting free boundary problem}

In this Section, we address the fully nonlinear free boundary problem obtained by letting $\varepsilon \to 0$. The ultimate goal is to find a solution to the free boundary problem \eqref{Damiao_Eduardo Eq02} that enjoys all the desired analytic and geometric properties. 

Our analysis starts off by the compactness of minimal solutions to Equation \eqref{Ee}. In fact, Proposition \eqref{equicontinuity} implies that $\left \{u_\varepsilon \right \}_{\varepsilon > 0}$ is a compact sequence and up to a subsequence,
\begin{equation}\label{def u0}
	\lim\limits_{\varepsilon \to 0} u_\varepsilon =: u_0.
\end{equation}
This Section is devoted to the study of the limiting function $u_0$ and the free boundary problem it solves.
 
For the readers convenience, let us hereafter set up the following notations that we will use throughout this Section:
\[
\begin{array}{rll}
\{u_0>0\} & := & \{x\in \Omega \mid u_0(x)>0\}, \\
\mathfrak{F}(u_0) & := & \partial\{u_0>0\} \cap \Omega, \\
d_0(X) & := & \dist(X,\mathfrak{F}(u_0)).
\end{array}
\]

Next Theorem recovers the fully nonlinear equation satisfies by $u_0$ within its positive set as well as its precise growth behavior near the free boundary, $\mathfrak{F}(u_0)$. 

\begin{theorem} \label{u0 initial prop}
The limiting function $u_0$ defined in \eqref{def u0} is a viscosity solution to
\begin{equation}\label{solutionzero}
	F(D^2u)=\gamma u^{\gamma-1} \quad \mbox{in }  \{u>0\}.
\end{equation}
Moreover, for a fixed  $\Omega'\Subset\Omega$,  there exists a constant $C=C(\Omega')$ that depends on $\Omega'$ and universal constants such that for any $X\in \Omega'\cap \{u_0>0\}$, there holds
\[
	Cd_0(X)^\alpha\leq u_0(X) \leq C^{-1}d_0(X)^\alpha,
\]
whenever $d_0(X)\leq\frac{\dist(\Omega',\partial\Omega)}{4}$. In particular, $u_0$ is a solution to the free boundary problem \eqref{Damiao_Eduardo Eq02}.
\end{theorem}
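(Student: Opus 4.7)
The plan has four parts: pass the gradient estimate to the limit, identify the equation on the positive set by stability, then derive the upper and lower growth bounds separately.

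First, from Theorem \ref{regularitythm} the family $\{u_\varepsilon\}$ is uniformly bounded in $C^{1,\frac{\gamma}{2-\gamma}}_{\rm loc}(\Omega)$, so along the subsequence defining $u_0$ in \eqref{def u0} we have $\nabla u_\varepsilon \to \nabla u_0$ locally uniformly, and the pointwise inequality $|\nabla u_\varepsilon|^2 \le C u_\varepsilon^\gamma$ passes to the limit, giving
\[
|\nabla u_0(X)|^2 \le C u_0(X)^\gamma \quad\text{for every } X \in \Omega'.
\]
In particular $\nabla u_0 \equiv 0$ on $\{u_0 = 0\}$, so the free boundary condition $u_0 = |\nabla u_0| = 0$ on $\mathfrak{F}(u_0)$ holds automatically.

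Next, to identify the PDE on $\{u_0 > 0\}$, fix a compact set $K \Subset \{u_0 > 0\}$. Then $u_0 \ge 2\delta > 0$ on $K$ for some $\delta>0$, and by the locally uniform convergence $u_\varepsilon \ge \delta$ on $K$ for all sufficiently small $\varepsilon$. As soon as $\varepsilon^\alpha < \delta/(1+\sigma_0)$, the construction of $B_\varepsilon$ in \eqref{def B} gives $B_\varepsilon(u_\varepsilon) \equiv 1$ on $K$, so $\beta_\varepsilon(u_\varepsilon) = \gamma u_\varepsilon^{\gamma-1}$ there and $u_\varepsilon$ is a viscosity solution of $F(D^2 u_\varepsilon) = \gamma u_\varepsilon^{\gamma-1}$ on $K$. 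Since $u_0 \ge 2\delta$ on $K$, the right-hand side $\gamma u_\varepsilon^{\gamma-1}$ converges uniformly on $K$ to the continuous function $\gamma u_0^{\gamma-1}$; the standard stability result for viscosity solutions of uniformly elliptic equations with a continuously varying right-hand side yields \eqref{solutionzero} on $K$, hence throughout $\{u_0 > 0\}$.

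For the upper growth bound at $X \in \Omega' \cap \{u_0 > 0\}$ with $d_0(X) \le \dist(\Omega',\partial\Omega)/4$, pick a nearest free boundary point $Y_0 \in \mathfrak{F}(u_0)$, $|X - Y_0| = d_0(X)$. By Step 1, $u_0(Y_0) = 0$ and $\nabla u_0(Y_0) = 0$, and $u_0 \in C^{1,\alpha - 1}_{\rm loc}$. Applying the same Taylor-expansion argument used to prove Corollary \ref{sharp col} now directly to $u_0$ gives
\[
u_0(X) = u_0(X) - u_0(Y_0) - \nabla u_0(Y_0)\cdot(X-Y_0) \le C\,|X - Y_0|^\alpha = C\,d_0(X)^\alpha.
\]

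For the lower bound, the key observation is that $\liminf_{\varepsilon \to 0} d_\varepsilon(X) \ge d_0(X)$. Indeed, for every $r < d_0(X)$ the closed ball $\overline{B_r(X)}$ is contained in $\{u_0 > 0\}$, hence $\inf_{\overline{B_r(X)}} u_0 =: m > 0$; uniform convergence gives $u_\varepsilon \ge m/2$ on $\overline{B_r(X)}$ for all $\varepsilon$ small enough, and as soon as $\varepsilon^\alpha < m/2$ this forces $B_r(X) \subset \{u_\varepsilon > \varepsilon^\alpha\}$, i.e.\ $d_\varepsilon(X) \ge r$. Letting $r \uparrow d_0(X)$ establishes the claim. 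Theorem \ref{lemma Growth} now gives $u_\varepsilon(X) \ge c_0 d_\varepsilon(X)^\alpha$, and taking $\varepsilon \to 0$ yields $u_0(X) \ge c_0 d_0(X)^\alpha$. Combined with the previous paragraph, this completes the two-sided estimate, and the free boundary conditions from Step 1 show $u_0$ solves \eqref{Damiao_Eduardo Eq02}.

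The technically delicate step is the lower bound, specifically the one-sided distance convergence $\liminf d_\varepsilon \ge d_0$. This relies on nothing more than continuity of $u_0$ and locally uniform convergence, so no deep input from the $\varepsilon$-level analysis is needed here; the reverse inequality $\limsup d_\varepsilon \le d_0$ would require the full Hausdorff-type convergence of the approximating level sets and is pleasantly unnecessary for the present theorem.
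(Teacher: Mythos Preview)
Your proof is correct and follows essentially the same strategy as the paper: stability of viscosity solutions for the equation on $\{u_0>0\}$, and passage to the limit in the $\varepsilon$-level nondegeneracy estimate (Theorem~\ref{lemma Growth}) for the lower growth bound via a comparison of $d_\varepsilon$ with $d_0$. Your handling of the upper bound---working directly with the $C^{1,\alpha-1}$ regularity of $u_0$ and the fact that $u_0=|\nabla u_0|=0$ at the nearest free-boundary point---is a bit cleaner than the paper's (which just says the upper estimate ``is obtained similarly''), and your explicit $\liminf_{\varepsilon\to 0} d_\varepsilon(X) \ge d_0(X)$ argument is a tidy rephrasing of the paper's $Y_\varepsilon \to Y_0$ step.
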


\begin{proof}
Let us fix a point $X_0\in \{u_0>0\}$ and let $u_0(X_0):=\sigma>0$. By continuity 
$u_0\geq \frac{1}{2}\sigma$ in $B_\rho(X_0)$ for same $\rho>0$.  Since $u_\varepsilon\to u_0$ uniformly over compact sets, for $\varepsilon\ll 1$ we have
\[
u_\varepsilon \geq \dfrac{1}{8}\sigma>(1+\sigma_0)\varepsilon^\alpha.
\]
That is,  $u_\varepsilon$ satisfies 
\[
	F(D^2u_\varepsilon)=\gamma u_\varepsilon^{\gamma-1} \quad \mbox{in} \quad B_{\frac{1}{2}\rho}(X_0).
\]
By the stability of viscosity solutions under uniform limits, we conclude $u_0$ is indeed a viscosity solution to Equation \eqref{solutionzero}.

Let us now turn our attention to the growth rate controls. For that, fix $X_0\in \Omega'\cap\{u_0>0\}$, with $d_0(X_0)\leq \frac{1}{4}\dist(\Omega',\partial\Omega)$ and label $u_0(X_0)=s>0$.  For $\varepsilon\ll 1$  we have
\[
	u_\varepsilon(X_0)\geq \frac{s}{2}>\varepsilon^\alpha.
\]
Thus, according to Corollary $\ref{lemma Growth}$, we obtain
\[
u_\varepsilon(X_0)\geq Cd_\varepsilon(X_0)^\alpha.
\]
Let $Y_\varepsilon\in\partial\{u_\varepsilon>\varepsilon^\alpha\}$ be such that $d_\varepsilon(X_0)=|X_0-Y_\varepsilon|$. By uniform convergence, it clearly follows that $Y_\varepsilon\to Y_0$ and $u_0(Y_0)=0$. In conclusion,
\[
	u_0(X_0)\geq C|X_0-Y_0|^\alpha\geq Cd_0(X_0)^\alpha.
\]
The upper estimate is obtained similarly.
\end{proof}

Strong nondegeneracy property established for the approximating solutions $u_\varepsilon$ also passes to the  limiting configuration. 
\begin{theorem}\label{strongzero}
Given $\Omega'\Subset\Omega$, there exist universal constants $C,\rho_0>0$, depending only on $\Omega'$ and universal constants, such that for any $X\in\Omega'\cap\overline{\{u_0>0\}}$, $\rho\leq\rho_0$ and $d_0(X)<\frac{\dist(X,\partial\Omega')}{2}$, there holds
\[
	C^{-1}\rho^\alpha \leq \sup_{B_{\rho}(X)} u_0 \leq C(\rho^\alpha+u_0(X))
\]

\end{theorem}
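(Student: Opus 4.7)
The plan is to recover both inequalities at the limiting level by passing $\varepsilon \to 0$ in the corresponding estimates already established for the approximating sequence $u_\varepsilon$. The essential tool will be the uniform convergence $u_\varepsilon \to u_0$ on compact subsets of $\Omega$, which is granted by the uniform $C^{1,\gamma/(2-\gamma)}$ bound of Theorem \ref{regularitythm}. Under this uniform convergence, both $\sup_{B_\rho(X)} u_\varepsilon \to \sup_{B_\rho(X)} u_0$ and $u_\varepsilon(X)^{\gamma/2} \to u_0(X)^{\gamma/2}$ as $\varepsilon \to 0$, so passing to the limit in any $\varepsilon$-uniform pointwise bound is routine.

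For the upper bound, I would fix $X \in \Omega' \cap \overline{\{u_0 > 0\}}$ and a radius $\rho \le r_0$, pick $\Omega'' \Subset \Omega$ containing $\overline{B_\rho(X)}$, and apply Corollary \ref{sharp col} at the $\varepsilon$-level:
\[
    \sup_{B_\rho(X)} u_\varepsilon \le u_\varepsilon(X) + C \, u_\varepsilon(X)^{\gamma/2}\rho + C\rho^\alpha,
\]
with $C$ depending only on $\Omega''$ and universal parameters. Sending $\varepsilon \to 0$ and invoking continuity of $t \mapsto t^{\gamma/2}$ yields the stated upper bound.

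For the lower bound, I would split on whether $u_0(X)$ vanishes. If $u_0(X) > 0$, then by uniform convergence $u_\varepsilon(X) > \varepsilon^\alpha$ for all sufficiently small $\varepsilon$, so Theorem \ref{lemma SN} applies directly to give $\sup_{B_\rho(X)} u_\varepsilon \ge c_0 \rho^\alpha$ for $\rho \le r_0$, and the limit $\varepsilon \to 0$ produces the desired bound. If instead $u_0(X) = 0$, so that $X \in \mathfrak{F}(u_0)$, I would approximate: by definition of $\overline{\{u_0 > 0\}}$ there is a sequence $X_k \to X$ with $u_0(X_k) > 0$. Fix $\eta \in (0,1)$; once $|X_k - X| < \eta\rho$ we have $B_{(1-\eta)\rho}(X_k) \subset B_\rho(X)$, so the previous case applied to $X_k$ with radius $(1-\eta)\rho$ yields $\sup_{B_\rho(X)} u_0 \ge c_0 (1-\eta)^\alpha \rho^\alpha$. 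Letting $\eta \to 0$ closes the case.

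The one delicate point is this boundary case $X \in \mathfrak{F}(u_0)$: we cannot apply Theorem \ref{lemma SN} to $u_\varepsilon$ at $X$ itself, since there is no \emph{a priori} guarantee that $u_\varepsilon(X) > \varepsilon^\alpha$, and indeed typically it will not be. The approximation by interior points circumvents this at the cost of a factor $(1-\eta)^\alpha$ that disappears in the limit. Everything else is a straightforward passage to the limit in $\varepsilon$, so no genuine analytic difficulty beyond what is already encoded in the uniform estimates for $u_\varepsilon$ remains.
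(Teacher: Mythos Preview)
Your proposal is correct and follows precisely the approach the paper indicates (the paper omits the proof, saying only that it is ``very similar'' to that of Theorem \ref{u0 initial prop}, i.e., passing the $\varepsilon$-uniform estimates from Corollary \ref{sharp col} and Theorem \ref{lemma SN} to the limit). Your explicit treatment of the boundary case $X\in\mathfrak{F}(u_0)$ via approximation by interior points $X_k\to X$ with $u_0(X_k)>0$ is the natural way to fill in the detail the paper suppresses.
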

%%%%

The proof of Theorem \ref{strongzero} is very similar to the one presented for Theorem \ref{u0 initial prop} and therefore,we shall omit the details. Next we show  the approximating configurations $\{u_\varepsilon\}$ converge to the liming one, $\{u_0 > 0 \}$ in the Hausdorff metric.

\begin{theorem}\label{uzero1}
Given $\delta>0$ and $\varepsilon\ll1$, we following inclusions hold:
\[
\begin{array}{c}
\{u_0>0\}\cap\Omega' \subset \mathcal{N}_\delta \left ( \{u_\varepsilon>C_1\varepsilon^\alpha\} \right ) \cap\Omega' \quad \text{and} \quad  \{u_\varepsilon>C_1\varepsilon^\alpha\} \cap\Omega' \subset  \mathcal{N}_\delta \left ( \{u_0>0\} \right )\cap\Omega'.
\end{array} 
\]
\end{theorem}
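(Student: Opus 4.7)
The plan is to deduce both inclusions from the nondegeneracy estimates already at hand, combined with uniform convergence $u_\varepsilon \to u_0$ on compact subsets. Since Theorem~\ref{regularitythm} yields a locally uniform $C^{1,\frac{\gamma}{2-\gamma}}$ bound on the family $\{u_\varepsilon\}$, after fixing an intermediate subdomain $\Omega'\Subset\Omega''\Subset\Omega$ we may assume $u_\varepsilon\to u_0$ uniformly on $\overline{\Omega''}$; write $\omega(\varepsilon):=\|u_\varepsilon-u_0\|_{L^\infty(\Omega'')}\to 0$. I will show both inclusions by, at each point $X$ of the set on the left, producing a nearby witness point $Y\in\overline{B_{\delta/2}(X)}$ that lies in the set on the right, using nondegeneracy to lift the relevant values above the convergence error $\omega(\varepsilon)$ and above $C_1\varepsilon^\alpha$.

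For the first inclusion, fix $X\in\{u_0>0\}\cap\Omega'$ and apply the lower nondegeneracy bound of Theorem~\ref{strongzero} at scale $\rho=\delta/2$ (legitimate for $\delta$ smaller than a universal multiple of $\rho_0$ and of $\dist(\Omega',\partial\Omega'')$). This produces $Y\in\overline{B_{\delta/2}(X)}\subset\Omega''$ with $u_0(Y)\ge C^{-1}(\delta/2)^\alpha$. Then, once $\varepsilon$ is taken small enough that
\[
\omega(\varepsilon)\le \tfrac{1}{2}C^{-1}(\delta/2)^\alpha \quad\text{and}\quad C_1\varepsilon^\alpha<\tfrac{1}{2}C^{-1}(\delta/2)^\alpha,
\]
we get $u_\varepsilon(Y)>C_1\varepsilon^\alpha$, hence $Y\in\{u_\varepsilon>C_1\varepsilon^\alpha\}\cap B_\delta(X)$, which is exactly the statement $X\in\mathcal{N}_\delta(\{u_\varepsilon>C_1\varepsilon^\alpha\})$.

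For the second inclusion, fix $X\in\{u_\varepsilon>C_1\varepsilon^\alpha\}\cap\Omega'$ and apply Theorem~\ref{lemma SN} at scale $r=\delta/2$ to produce $Y\in\overline{B_{\delta/2}(X)}$ with $u_\varepsilon(Y)\ge c_0(\delta/2)^\alpha$. Choosing $\varepsilon$ small enough that $\omega(\varepsilon)<\tfrac{1}{2}c_0(\delta/2)^\alpha$ then gives $u_0(Y)\ge u_\varepsilon(Y)-\omega(\varepsilon)>0$, so $Y\in\{u_0>0\}\cap B_\delta(X)$ and we conclude $X\in\mathcal{N}_\delta(\{u_0>0\})$.

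The only real obstacle is bookkeeping: one must guarantee that the witness ball $B_{\delta/2}(X)$ lies in a domain where the respective nondegeneracy theorem is applicable and where the uniform convergence estimate $\omega(\varepsilon)$ is valid, and that the smallness threshold on $\varepsilon$ can be chosen uniformly in $X$. Both are arranged at the outset by selecting the intermediate subdomain $\Omega''$ and by restricting $\delta$ to be smaller than a universal multiple of $\min\{\rho_0,\,r_0,\,\dist(\Omega',\partial\Omega'')\}$; thereafter the thresholds on $\varepsilon$ depend only on $\delta$ and universal parameters, as required.
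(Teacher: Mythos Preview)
Your proof is correct and follows essentially the same strategy as the paper: both arguments combine the strong nondegeneracy estimates (Theorem~\ref{lemma SN} for $u_\varepsilon$, Theorem~\ref{strongzero} for $u_0$) with the locally uniform convergence $u_\varepsilon\to u_0$ to produce a witness point in the target set within distance $\delta$. The paper phrases this as a contradiction argument (extracting a convergent subsequence $X_\varepsilon\to X_0$, $Z_\varepsilon\to Z_0$), whereas you give the direct version with an explicit smallness threshold on $\varepsilon$; the content is the same.

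One minor bookkeeping point: when you invoke Theorem~\ref{strongzero} for the first inclusion, note that its statement carries the side condition $d_0(X)<\tfrac{1}{2}\dist(X,\partial\Omega')$, which your restriction on $\delta$ does not automatically enforce. This is harmless, since the lower bound in Theorem~\ref{strongzero} is simply the limiting version of Theorem~\ref{lemma SN} and holds for every $X\in\overline{\{u_0>0\}}\cap\Omega'$ and $\rho\le\rho_0$ without that restriction; alternatively, in the complementary case $d_0(X)\ge\delta/2$ one may use the growth bound from Theorem~\ref{u0 initial prop} at a point of $B_{\delta/2}(X)\cap\mathfrak{F}(u_0)$'s neighborhood instead. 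Either way the argument closes.
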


\begin{proof}
We will show only the last inclusion, as the first follows similarly.  Suppose, for the purpose of contradiction, that such inclusion is false. There would  exist, therefore, $\delta_0 > 0$ and a sequence of points $\{X_\varepsilon\}$, satisfying
\begin{enumerate}
\item[a)] $X_\varepsilon\in \Omega'\cap \{u_\varepsilon>C_1\varepsilon^\alpha\}$;
\item[b)] $\dist(X_\varepsilon,\{u_0>0\})>\delta_0$;
\item[c)]  $X_\varepsilon\to X_0$, and $\dist(X_0,\{u_0>0\})>\delta_0$.
\end{enumerate}
From property c) $u_0(X_0) = 0$. However, by strong non-degeneracy, Theorem \eqref{strongzero},  for each $\varepsilon$, we can find $Z_\varepsilon\in\overline{B_{\frac{1}{2}\delta_0}(X_\varepsilon)}$, such that
\begin{equation}\label{proof HM eq01}
	u_\varepsilon(Z_\varepsilon)=\sup\limits_{B_{\frac{1}{2}\delta_0}(X_\varepsilon)}u_\varepsilon \geq C\,\delta_0^\alpha.
\end{equation}
As $\varepsilon \to 0$, up to a subsequence, $Z_\varepsilon \to Z_0$. However, from \eqref{proof HM eq01} and $u_0 (Z_0) > 0$ and by property c) above $Z_0 \in \{u_0 = 0\}$, which is a contradiction.
\end{proof}

%%%%
It also follows as in Corollary \ref{corollarydensity} that the  set $\{u_0>0\}$ has uniform positive density along the free boundary $\mathfrak{F}(u_0)$. 
\begin{theorem}\label{uniform den u0}
 Given $\Omega'\Subset \Omega$ there exists a constant $0< c\le 1$, depending on $\Omega'$ and universal parameters, such that 
\[
\dfrac{ \Leb \left ( B_\delta(X)\cap \{u_0>0\} \right ) }{\Leb \left ( B_\delta(X) \right ) }\geq c,
\]
for all $X\in\mathfrak{F}(u_0) \cap \Omega'$.
\end{theorem}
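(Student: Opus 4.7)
The plan is simply to replay the argument of Corollary \ref{corollarydensity} with $u_0$ in place of $u_\varepsilon$. The two ingredients I need are strong non-degeneracy and the sharp gradient bound at the limit level. The first is furnished directly by Theorem \ref{strongzero}; the second descends from the uniform estimate $|\nabla u_\varepsilon|^2 \leq C u_\varepsilon^\gamma$ of Theorem \ref{regularitythm}, which survives the passage $\varepsilon \to 0$ because of the $C^1_{\mathrm{loc}}$ compactness \eqref{compS8} used to define $u_0$.

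Fix $X \in \mathfrak{F}(u_0) \cap \Omega'$ and $0 < \delta \leq \rho_0$. Since $X \in \overline{\{u_0 > 0\}}$ and $d_0(X) = 0$, Theorem \ref{strongzero} applies at $X$ with radius $\delta$ and produces a point $Y_0 \in \overline{B_\delta(X)}$ with $u_0(Y_0) \geq c_0 \delta^\alpha > 0$. Arguing exactly as in Corollary \ref{sharp col}, now for $u_0$ and using the limit versions $|\nabla u_0(Y_0)|^2 \leq C u_0(Y_0)^\gamma$ and $u_0 \in C^{1,\alpha-1}_{\mathrm{loc}}$, I obtain, for every $Y \in B_{\tau\delta}(Y_0)$,
\[
u_0(Y) \geq u_0(Y_0) - |\nabla u_0(Y_0)|\cdot|Y - Y_0| - C_1 |Y-Y_0|^\alpha \geq \bigl(c_0 - C_1(\tau + \tau^\alpha)\bigr) \delta^\alpha.
\]
Choosing $\tau$ universally small (independently of $\delta$ and $X$), the right-hand side is strictly positive, hence $B_{\tau\delta}(Y_0) \subset \{u_0 > 0\}$.

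It remains to estimate $\Leb\bigl(B_\delta(X) \cap B_{\tau\delta}(Y_0)\bigr)$ from below. Since $|Y_0 - X| \leq \delta$, elementary geometry shows this intersection contains a fixed dimensional fraction of $B_{\tau\delta}(Y_0)$ (in the worst case $Y_0 \in \partial B_\delta(X)$, at least a half-ball of radius $\tau\delta$ still lies inside $B_\delta(X)$). Consequently
\[
\Leb\bigl(B_\delta(X) \cap \{u_0 > 0\}\bigr) \geq \Leb\bigl(B_\delta(X) \cap B_{\tau\delta}(Y_0)\bigr) \geq c\,\delta^N,
\]
which is the stated uniform density. No serious obstacle is anticipated, since all the ingredients are $\varepsilon$-uniform estimates from Section \ref{Section Nondeg} whose limit-level versions are already in hand; the only bookkeeping is to confirm that $\tau$ can be selected once and for all, independently of $X \in \mathfrak{F}(u_0) \cap \Omega'$ and $\delta \in (0, \rho_0]$.
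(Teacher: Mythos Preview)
Your proposal is correct and follows exactly the approach the paper indicates: the paper omits the proof, stating only that it is similar to Corollary \ref{corollarydensity}, and you have faithfully replayed that argument with $u_0$ in place of $u_\varepsilon$, using Theorem \ref{strongzero} and the limit of the gradient estimate from Theorem \ref{regularitythm}.
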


The proof of Theorem \ref{uniform den u0} is  similar to the one presented for Corollary \ref{corollarydensity} and therefore we omit the details.

%%%%
 
 As a consequence of the analysis carried out in Section \ref{Section Harnack}, we will show that a \textit{clean} Harnack inequality is valid near the free boundary $\mathfrak{F}(u_0)$. As mentioned in that Section, such a result is quite surprising a first view, as the nonlinear source of the equation is of order $\sim u^{\gamma -1}$ and thus it blows up near the boundary of the quenching region. 

\begin{theorem}[Harnack Inequality for tangential balls]  Let $X_0 \in \{u_0 > 0 \}$ and $d:=d_0(X_0)$. Then, there exist a universal constant $C>0$ such that
$$
	\sup\limits_{B_{\frac{d}{2}}(X_0)} u_0 \le C \inf\limits_{B_{\frac{d}{2}}(X_0)} u_0.
$$
\end{theorem}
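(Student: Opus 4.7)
The plan is to derive the limiting Harnack inequality by passing to the limit $\varepsilon\to 0$ in the tangential-ball Harnack inequality already established for $u_\varepsilon$ in Section~\ref{Section Harnack}. The key observation is that the constant in the $\varepsilon$-level inequality is universal and, moreover, any tangential ball for $u_0$ sits inside a tangential ball for $u_\varepsilon$ once $\varepsilon$ is small enough. The technical issue to overcome is the mismatch between the fixed geometric radius $d_0(X_0)/2$ and the $\varepsilon$-dependent radius $d_\varepsilon(X_0)/2$ appearing in the approximation, which will be handled by a monotonicity-in-the-domain argument.

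First, I would fix $X_0\in\{u_0>0\}$, set $d:=d_0(X_0)$, and choose an arbitrary small $\delta>0$. Since $\overline{B_{d-\delta}(X_0)}\subset \{u_0>0\}$ is compact and $u_0$ is continuous, there exists $m=m(\delta)>0$ with $u_0\ge m$ on $\overline{B_{d-\delta}(X_0)}$. By the local uniform convergence $u_\varepsilon\to u_0$ (a consequence of the uniform estimate \eqref{compS8}), we have $u_\varepsilon\ge m/2>\varepsilon^\alpha$ on $\overline{B_{d-\delta}(X_0)}$ for every sufficiently small $\varepsilon$. This yields the crucial geometric fact
\[
d_\varepsilon(X_0)\;\ge\; d-\delta \quad\text{for all } \varepsilon\ll 1.
\]

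Next, I would apply the tangential-ball Harnack inequality for $u_\varepsilon$: since $\varepsilon\le d_\varepsilon(X_0)$ for $\varepsilon$ small, there is a universal $C>0$ with
\[
\sup_{B_{d_\varepsilon(X_0)/2}(X_0)} u_\varepsilon \;\le\; C\,\inf_{B_{d_\varepsilon(X_0)/2}(X_0)} u_\varepsilon.
\]
Using the inclusion $B_{(d-\delta)/2}(X_0)\subset B_{d_\varepsilon(X_0)/2}(X_0)$ (which is guaranteed by the step above) and the monotonicity of the supremum and infimum with respect to the domain, I derive
\[
\sup_{B_{(d-\delta)/2}(X_0)} u_\varepsilon \;\le\; \sup_{B_{d_\varepsilon(X_0)/2}(X_0)} u_\varepsilon \;\le\; C\,\inf_{B_{d_\varepsilon(X_0)/2}(X_0)} u_\varepsilon \;\le\; C\,\inf_{B_{(d-\delta)/2}(X_0)} u_\varepsilon.
\]

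Finally, I pass to the limit $\varepsilon\to 0$ in the outermost inequality. By local uniform convergence on the compact set $\overline{B_{(d-\delta)/2}(X_0)}$, both the supremum and the infimum converge to those of $u_0$, yielding
\[
\sup_{B_{(d-\delta)/2}(X_0)} u_0 \;\le\; C\,\inf_{B_{(d-\delta)/2}(X_0)} u_0.
\]
Letting $\delta\to 0^+$ and invoking continuity of $u_0$, the left-hand side tends to $\sup_{B_{d/2}(X_0)}u_0$ and the right-hand side to $C\inf_{B_{d/2}(X_0)}u_0$, which is precisely the desired inequality. The only delicate point in this scheme is the inclusion argument matching the two tangential radii; everything else is a routine consequence of compactness and uniform convergence. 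No new analytic estimate beyond those already established is required.
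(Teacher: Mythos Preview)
Your proof is correct, but it takes a genuinely different route from the paper. The paper does not pass to the limit in the $\varepsilon$-level tangential Harnack inequality at all; instead it reproves the inequality directly for $u_0$, repeating verbatim the argument from Section~\ref{Section Harnack}: pick $\xi_0,\xi_1\in\overline{B_{d/2}(X_0)}$ realizing the infimum and supremum, use the lower growth bound $u_0(\xi_0)\ge C_1 d^\alpha$ (from $d_0(\xi_0)\ge d/2$ and Theorem~\ref{u0 initial prop}), and use the sharp upper bound from Theorem~\ref{strongzero} together with $u_0(X_0)\le C d^\alpha$ to get $u_0(\xi_1)\le C_2 d^\alpha$. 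Your approach instead leverages the stability of the already-proved $\varepsilon$-level inequality under uniform limits, the key new ingredient being the geometric observation $d_\varepsilon(X_0)\ge d-\delta$ for $\varepsilon\ll 1$, which lets you nest the fixed ball $B_{(d-\delta)/2}(X_0)$ inside the $\varepsilon$-tangential ball and then send $\delta\to 0$. The paper's route is shorter and avoids the double limit $\varepsilon\to 0$, $\delta\to 0$, but it requires that the two-sided growth estimates have already been transferred to $u_0$ (which they have, in Theorems~\ref{u0 initial prop} and~\ref{strongzero}). Your route is more conceptual in that it shows the tangential Harnack inequality itself is stable under the approximation, and it would work even if one had not yet restated the growth bounds for $u_0$.
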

\begin{proof}
Let $\xi_0,\xi_1\in \overline{B_{\frac{d}{2}}(X_0)}$, be such that
\[
\inf\limits_{B_{\frac{d}{2}}(X_0)}u_0=u_0(\xi_0) \quad \mbox{and} \quad \sup\limits_{B_{\frac{d}{2}}(X_0)}u_0=u_0(\xi_1).
\]
Since $d_0(\xi_0)\geq \dfrac{d}{2}$, by Theorem $\ref{strongzero}$, there holds
\begin{equation}
u_0(\xi_0)\geq C_1d^\alpha.
\end{equation}
On the other hand, from Theorem \ref{strongzero}, we have
\[
u_0(\xi_1)\leq C_2\left(\frac{d^\alpha}{2}+u_0(X_0)\right),
\]
and for $Y\in\partial\left\{u_0>0\right\}$ where $d=|Y-X_0|$,   
\[
u_0(X_0) \leq \sup\limits_{B_d(Y)}u_0\leq  C_2d^\alpha.
\]
Thus,
\[
	\sup\limits_{B_{\frac{d}{2}}(X_0)} u_0 \le C_2 \inf\limits_{B_{\frac{d}{2}}(X_0)} u_0.
\]
for a constant $C_2>0$ that does not depend of $u_0$.
\end{proof}

As in the proof of Corollary $\ref{int col}$, we can establish a lower bound for solid integrals for $u_0$: for all $X_0\in \partial\{u_0>0\}\cap \Omega'$
\begin{equation}\label{inteq2}
	C_1\rho^\alpha \leq \intav{B_\rho(X_0)} u_0 dx,
\end{equation}
where $C_1=C_1(\Omega')>0$. Next we establish upper and lower control on spherical integrals of $u_0$. 

\begin{theorem}
Given $\Omega'\Subset \Omega$, there exists a universal constant $C=C(\Omega')$, such that for all $X_0\in \partial\{u_0>0\}\cap \Omega'$,
\[
C^{-1}\rho^\alpha \leq \intav{\partial B_\rho(X_0)} u_0 d\mathcal{H}^{N-1} \le C \rho^\alpha.
\]
\end{theorem}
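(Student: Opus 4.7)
Both the upper and lower bounds reduce to scale-invariant facts about $u_0$ near the free boundary. My strategy is to obtain the upper bound as a direct pointwise consequence of the optimal regularity, and the lower bound by a blow-up/compactness argument that exploits the strong nondegeneracy established in Theorem~\ref{strongzero} together with the maximum principle.

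For the \emph{upper bound}, I will argue directly. Since $X_0 \in \mathfrak{F}(u_0)$ we have $u_0(X_0)=0$, and passing Theorem~\ref{regularitythm} to the limit gives $|\nabla u_0|^2 \le C u_0^\gamma$, which forces $\nabla u_0(X_0)=0$. The uniform $C^{1,\alpha-1}_{\loc}$ bound then yields
\[
	u_0(Y) = u_0(X_0)+\nabla u_0(X_0)\cdot(Y-X_0) + O(|Y-X_0|^\alpha) \le C|Y-X_0|^\alpha
\]
in a neighborhood of $X_0$. Taking $|Y-X_0|=\rho$ and integrating over $\partial B_\rho(X_0)$ gives $\intav_{\partial B_\rho(X_0)} u_0 \, d\mathcal{H}^{N-1} \le C\rho^\alpha$.

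For the \emph{lower bound} I will argue by contradiction and compactness. Suppose there exist $X_n \in \mathfrak{F}(u_0)\cap \Omega'$ and radii $\rho_n \in (0,r_0]$ with $\intav_{\partial B_{\rho_n}(X_n)} u_0\,d\mathcal{H}^{N-1} \le \delta_n \rho_n^\alpha$ and $\delta_n \to 0$. Rescale:
\[
	v_n(Y) := \rho_n^{-\alpha}\, u_0(X_n + \rho_n Y), \qquad Y \in B_2.
\]
The scaling exponent $\alpha$ is intrinsic to the equation, so $v_n$ satisfies $F_n(D^2 v_n)=\gamma v_n^{\gamma-1}$ in $\{v_n>0\}$ with $F_n(M):=\rho_n^{2-\alpha}F(\rho_n^{\alpha-2}M)$ uniformly elliptic (same constants as $F$), and inherits the uniform gradient bound $|\nabla v_n|^2 \le C v_n^\gamma$ hence a uniform $C^{1,\alpha-1}$-estimate. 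Nondegeneracy (Theorem~\ref{strongzero}) applied at $X_n$ and scale $\rho_n$ yields $\sup_{B_1} v_n \ge c_0>0$, while the choice of sequence gives $\intav_{\partial B_1} v_n \le \delta_n \to 0$. By Arzel\`a--Ascoli, along a subsequence $v_n \to v_\infty$ in $C^1_{\loc}(\overline{B_1})$ and $F_n \to F_\infty$ locally uniformly to a uniformly elliptic operator $F_\infty$. Thus $v_\infty \ge 0$, $v_\infty(0)=0$, $\sup_{\overline{B_1}} v_\infty \ge c_0 > 0$, and $v_\infty \equiv 0$ on $\partial B_1$. Stability of viscosity subsolutions under uniform limits gives $F_\infty(D^2 v_\infty) \ge 0$ in $B_1$ (since the right-hand side $\gamma v_n^{\gamma-1}$ is nonnegative in $\{v_n>0\}$). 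The maximum principle for $F_\infty$-subsolutions then forces $\sup_{\overline{B_1}} v_\infty = \sup_{\partial B_1} v_\infty = 0$, contradicting $\sup v_\infty \ge c_0$.

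\textbf{Main obstacle.} The delicate point is justifying that $v_\infty$ is globally an $F_\infty$-viscosity subsolution, including across its zero set, so that the maximum principle applies. This requires using nonnegativity of the right-hand side together with the standard closure of viscosity subsolutions under $C^0$ limits (and passing the operator $F_n \to F_\infty$ by uniform ellipticity); the behavior at points where $v_\infty$ vanishes is handled because any test function touching from above at such a point has nonpositive value there, so the subsolution inequality $F_\infty(D^2\varphi)\ge 0$ follows from the analogous inequality for $v_n$ on compact sets where $v_n$ stays strictly positive, combined with a standard perturbation of the test function.
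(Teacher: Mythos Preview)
Your upper bound matches the paper's. For the lower bound, both you and the paper rescale to $v_n(Y)=\rho_n^{-\alpha}u_0(X_n+\rho_n Y)$, but reach the contradiction differently: the paper claims (without detail) that smallness of the normalized spherical average at radius $\rho_m$ propagates to every radius $r\rho_m$, $0<r\le 1$, concludes $v_0\equiv 0$ in $B_1$, and then contradicts the solid $L^1$ lower bound \eqref{inteq2}. You use only $v_\infty=0$ on $\partial B_1$ and invoke the maximum principle for subsolutions. Your route is more self-contained and sidesteps the propagation-in-$r$ step the paper takes for granted.

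One point needs correction: do not assert that $F_n\to F_\infty$ locally uniformly. When $\rho_n\to 0$ this limit would be the recession operator $F^\star(M)=\lim_{\mu\to 0}\mu F(\mu^{-1}M)$ discussed in Section~\ref{Section Hausdorff}, whose existence is an additional structural hypothesis not assumed here. The repair is immediate and in fact streamlines your argument. Since each $F_n$ is $(\lambda,\Lambda)$-uniformly elliptic with $F_n(0)=0$, the inequality $F_n(D^2 v_n)\ge 0$ holds in the viscosity sense throughout: in $\{v_n>0\}$ it is the equation, and at a zero of $v_n$ any $C^2$ test function $\varphi$ touching from above has a local minimum there, hence $D^2\varphi\ge 0$ and $F_n(D^2\varphi)\ge F_n(0)=0$ by ellipticity. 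Consequently each $v_n$ lies in the Pucci subsolution class (i.e., $\mathcal{M}^+_{\lambda,\Lambda}(D^2 v_n)\ge 0$ in the viscosity sense). That class is closed under locally uniform limits and carries the maximum principle, so $\sup_{B_1}v_\infty\le\sup_{\partial B_1}v_\infty=0$, giving the contradiction with $\sup_{\overline{B_1}} v_\infty\ge c_0$. No convergence of the operators $F_n$ is required.
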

\begin{proof}
The upper estimate follows directly from Corollary \eqref{strongzero}. We will show the lower bound by means of contradiction. Suppose the lower inequality is not valid. There would then exist $\rho_m>0$ and $X_m\in\partial\{u_0>0\}$, such that
\begin{equation}\label{inteq00}
	\dfrac{1}{\rho_m^\alpha}\intav{\partial B_{\rho_m}(X_m)} u_0 d\mathcal{H}^{N-1}=\text{o}(1),
\end{equation}
as $m\to \infty$. Clearly, \eqref{inteq00} implies 
\begin{equation}\label{inteq1}
\dfrac{1}{\rho_m^\alpha}\intav{\partial B_{r\rho_m}(X_m)} u_0 d\mathcal{H}^{N-1}=\text{o}(1),
\end{equation}
for all $0<r\leq1$. Define
\[
v_m(X):=\rho_m^{-\alpha}u_0(X_m+\rho_m X).
\]
Up to a subsequence, $v_m$ converges uniformly over compact subsets of $\mathbb{R}^N$, to a  function $v_0$. Furthermore, 
\begin{equation}\label{inteq3}
F_m(D^2 v_m)=\gamma v_m^{\gamma-1} \quad \mbox{in }  \{v_m > 0\},
\end{equation}
where $F_m (\mathcal{M}) := \rho_m^{\alpha} F( \rho_m^{-\alpha}  \mathcal{M})$. For any $0 < r \leq 1$,
\[
\intav{\partial B_{r\rho_m}(X_m)}u_0(Y)d\mathcal{H}^{N-1}=\intav{\partial B_{r}(0)}u_0(X_m+\rho_m X)d\mathcal{H}^{N-1}=\rho_m^\alpha\intav{\partial B_{r}(0)}v_m(X)d\mathcal{H}^{N-1}.
\]
Thus, by \eqref{inteq1}, letting $m\to\infty$, yields 
\[
	\intav{\partial B_{r}(0)}v_0 \,d\mathcal{H}^{N-1}=\rho_m^{-\alpha}\intav{\partial B_{r\rho_m}(X_m)}u_0\, d\mathcal{H}^{N-1}=0, \quad \forall 0<r\leq 1.
\]
Therefore, $v_0 \equiv 0$ in $B_1$ which contradicts \eqref{inteq2} properly scaled to $v_m$. 
\end{proof}

\section{Geometric estimates of the free boundary}

In this final Section we obtain further fine geometric-measure properties of the free boundary $\mathfrak{F}(u_0)$. As in Section \ref{Section Hausdorff}, here we shall work under the addition structural assumption \eqref{hip}. The first result we show concerns the local  finiteness of the $\mathcal{H}^{N-1}$-Hausdorff measure of the free boundary $\mathfrak{F}(u_0)$. 

\begin{theorem}\label{L1}
Given $\Omega'\Subset\Omega$ there exists a constant $C=C(\Omega')>0$, depending on $\Omega'$ and universal constants, such that
\[
	\Leb \left ( \mathcal{N}_{\mu}(\{u_0>0\})\cap B_\rho(X_0) \right ) \leq C\mu\rho^{N-1},
\]
wherenever, $X_0\in\Omega'\cap\partial\{u_0>0\}$, $d_0(X_0)<\frac{1}{10}\dist(\Omega',\partial\Omega)$, $\mu\ll \rho$ and $\rho$ is universally small. In particular,
$$
	\mathcal{H}^{N-1} \left ( B_\rho(X_0) \cap \mathfrak{F}(u_0) \right ) \le C \rho^{N-1}.
$$
\end{theorem}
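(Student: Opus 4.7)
The plan is to derive the estimate for the limiting configuration $u_0$ by passing to the limit in the corresponding uniform-in-$\varepsilon$ bound stated in Theorem \ref{haus1}. The three ingredients that make this feasible are: (i) the Hausdorff-type convergence of the approximating positivity sets to $\{u_0>0\}$ provided by Theorem \ref{uzero1}; (ii) the uniform $C^1_{\loc}$ bound \eqref{compS8} which gives $u_\varepsilon \to u_0$ locally uniformly; and (iii) the fact that the constants in Theorem \ref{haus1} are universal and in particular independent of $\varepsilon$.

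First, I would reduce the set inclusion to a neighborhood of the approximating free boundary. Fix $X_0\in\Omega'\cap\partial\{u_0>0\}$ and $\mu\ll\rho$ universally small. By Theorem \ref{uzero1}, for any $\delta>0$ and $\varepsilon\ll 1$ we have
$$
\{u_0>0\}\cap\Omega'\;\subset\;\mathcal{N}_\delta\bigl(\{u_\varepsilon>C_1\varepsilon^\alpha\}\bigr).
$$
Choosing $\delta=\mu$ yields $\mathcal{N}_\mu(\{u_0>0\})\cap B_\rho(X_0)\subset \mathcal{N}_{2\mu}(\{u_\varepsilon>C_1\varepsilon^\alpha\})\cap B_\rho(X_0)$ for $\varepsilon$ small enough. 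Next, since $X_0\in\mathfrak{F}(u_0)$, the uniform convergence $u_\varepsilon\to u_0$ together with Theorem \ref{strongzero} supplies points $X_\varepsilon\in\partial\{u_\varepsilon>C_1\varepsilon^\alpha\}$ with $X_\varepsilon\to X_0$; consequently $B_\rho(X_0)\subset B_{2\rho}(X_\varepsilon)$ for $\varepsilon\ll 1$, and also $d_\varepsilon(X_\varepsilon)$ sits in the regime required to apply Theorem \ref{haus1} at $X_\varepsilon$ with radius $2\rho$ (after slightly shrinking $\Omega'$).

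Combining these observations with Theorem \ref{haus1} applied at $X_\varepsilon$ on the scale $2\mu$, I obtain
$$
\Leb\bigl(\mathcal{N}_\mu(\{u_0>0\})\cap B_\rho(X_0)\bigr)
\;\le\;
\Leb\bigl(\mathcal{N}_{2\mu}(\{u_\varepsilon>C_1\varepsilon^\alpha\})\cap B_{2\rho}(X_\varepsilon)\bigr)
\;\le\; C\mu\rho^{N-1},
$$
with $C$ depending only on $\Omega'$ and universal parameters. Since the left-hand side does not depend on $\varepsilon$, the bound is preserved in the limit and yields the first assertion of Theorem \ref{L1}. The deduction of the $\mathcal{H}^{N-1}$ estimate from the Lebesgue bound is then routine, exactly as carried out at the end of the proof of Theorem \ref{haus1}: cover $\mathfrak{F}(u_0)\cap B_\rho(X_0)$ by balls of radius $\mu$ centered on the free boundary, use the inclusion of the $\mu$-union inside $\mathcal{N}_{\mu}(\{u_0>0\})\cap B_{\rho+\mu}(X_0)$, estimate the number of balls by the Lebesgue bound divided by $\mu^N$, multiply by the $(N-1)$-content of each ball, and let $\mu\to 0$ to obtain the spherical Hausdorff measure estimate
$$
\mathcal{H}^{N-1}\bigl(\mathfrak{F}(u_0)\cap B_\rho(X_0)\bigr)\;\le\; C\rho^{N-1}.
$$

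The main technical obstacle, in my view, is the first step: one must ensure that $\mathcal{N}_\mu(\{u_0>0\})\cap B_\rho(X_0)$ can indeed be trapped inside a controlled neighborhood of a single approximating set $\{u_\varepsilon>C_1\varepsilon^\alpha\}$ anchored near a point of $\partial\{u_\varepsilon>C_1\varepsilon^\alpha\}$ lying in $\Omega'$. This is where the Hausdorff convergence of Theorem \ref{uzero1} and the strong non-degeneracy of $u_0$ in Theorem \ref{strongzero} are both essential, the latter guaranteeing that $\mathfrak{F}(u_0)$ is genuinely approximated by points of $\partial\{u_\varepsilon>C_1\varepsilon^\alpha\}$ rather than escaping to the boundary of $\Omega'$. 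Once this geometric compatibility is secured, the heavy lifting has already been done inside Theorem \ref{haus1} and the remainder is a soft passage-to-the-limit argument.
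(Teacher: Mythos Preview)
Your proposal is correct and follows essentially the same route as the paper's own proof: combine the uniform-in-$\varepsilon$ Hausdorff estimate of Theorem \ref{haus1} with the Hausdorff convergence of Theorem \ref{uzero1} to trap $\mathcal{N}_\mu(\{u_0>0\})\cap B_\rho(X_0)$ inside $\mathcal{N}_{2\mu}(\{u_\varepsilon>C_1\varepsilon^\alpha\})$ intersected with a comparable ball, and then read off the bound. You are in fact slightly more careful than the paper, which applies Theorem \ref{haus1} directly at $X_0$ without recentering at a point $X_\varepsilon\in\partial\{u_\varepsilon>C_1\varepsilon^\alpha\}$; your extra step of producing such $X_\varepsilon\to X_0$ is the right way to make the hypothesis of Theorem \ref{haus1} honestly satisfied.
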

\begin{proof}
From Theorem$\ref{haus1}$ and Theorem $\ref{uzero1}$, we have for $\varepsilon\ll 1$ 
\[
\begin{array}{c}
|\mathcal{N}_{2\mu}(\{u_\varepsilon>C_1\varepsilon^\alpha\})\cap B_{\rho}(X_0)|\leq C\mu\rho^{N-1} \\
\\
 \mbox{and} \\
\\ 
\{u_0>0\}\cap B_\rho(X_0) \subset \mathcal{N}_\mu(\{u_\varepsilon>C_1\varepsilon^\alpha\})\cap B_{\rho}(X_0). \\
\end{array}
\]
Easily we show
\[
\mathcal{N}_\mu(\{u_0>0\})\cap B_{\rho}(X_0) \subset \mathcal{N}_{2\mu}(\{u_\varepsilon>C_1\varepsilon^\alpha\})\cap B_{\rho}(X_0),
\]
which give us the estimate desired.
\end{proof}

A consequence of Theorem \ref{L1} is that the limiting region $\{u_0 > 0 \}$  has locally
finite perimeter. The key final result we will show here states that the reduced free boundary, $\redbdry \{u_0 > 0 \}$ has total measure. More importantly, we prove that around points $Z$ of the reduced free boundary, there holds
$$
	\mathcal{H}^{N-1} \left ( B_\rho(Z) \cap \mathfrak{F}(u_0) \right )  \sim \rho^{N-1}.
$$ 
In particular the free boundary has a theoretical measure outward unit vector for $\mathcal{H}^{N-1}$ almost all points in $\mathfrak{F}(u_0)$.

\begin{theorem} \label{total measure}
Given $\Omega' \Subset \Omega$, there exists a positive constant $C = C(\Omega')$, that depends only on $\Omega'$ and universal constants, such that for any ball $B_{\rho}(X_{0})$, with $\rho$ universally small, centered at a free boundary point $x_0 \in \partial\{u_0>0\}$, there holds
\[
C^{-1} \rho^{N-1} \leq \mathcal{H}^{N-1}(\partial_{red}\{u_0>0\}\cap B_\rho(X_0))\leq C\rho^{N-1}.
\]
In particular,
$$
    \mathcal{H}^{N-1} \left ( \partial\{u_0>0\} \setminus \redbdry \{u_0>0\} \right )=0.
$$
\end{theorem}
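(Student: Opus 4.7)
My plan combines three classical ingredients: the Hausdorff estimate from Theorem \ref{L1} for the upper bound, the relative isoperimetric inequality for the lower bound, and De Giorgi's structure theorem for the total-measure claim. The upper bound is immediate: since $\redbdry E \subset \partial E$ where $E := \{u_0 > 0\}$, Theorem \ref{L1} yields directly
\[
\mathcal{H}^{N-1}(\redbdry E \cap B_\rho(X_0)) \;\leq\; \mathcal{H}^{N-1}(\partial E \cap B_\rho(X_0)) \;\leq\; C\rho^{N-1}.
\]
Moreover, the Minkowski-type content estimate of Theorem \ref{L1} implies that $E$ has locally finite perimeter in $\Omega'$, so that its perimeter measure coincides with $\mathcal{H}^{N-1}$ restricted to $\redbdry E$.

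For the lower bound and the total-measure conclusion, the key reduction is to establish the uniform positive Lebesgue density of the zero phase at every free boundary point: there exists $c>0$ universal such that $\Leb(B_\rho(X_0) \setminus E) \geq c\rho^N$ for every $X_0 \in \mathfrak{F}(u_0) \cap \Omega'$ and $\rho$ universally small. The positive-phase counterpart is Theorem \ref{uniform den u0}. To handle the zero phase, I would adapt the Alt--Caffarelli harmonic-replacement trick to the fully nonlinear setting: let $v$ solve $F(D^2 v) = 0$ in $B_\rho(X_0)$ with boundary datum $v = u_0$ on $\partial B_\rho(X_0)$. Since $\gamma u_0^{\gamma-1} \geq 0$, the comparison principle yields $v \geq u_0$ in $B_\rho(X_0)$. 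Harnack inequality for $F$-harmonic functions combined with the spherical growth $\intav_{\partial B_\rho(X_0)} u_0 \sim \rho^\alpha$ established at the end of Section~8 provide $v \gtrsim \rho^\alpha$ in $B_{\rho/2}(X_0)$. On the other hand, the sharp upper growth $u_0 \leq C d_0^\alpha$ from Theorem \ref{u0 initial prop} bounds $v - u_0$ pointwise by the distance to the zero set. Comparing a volume lower bound for $\int_{B_{\rho/2}}(v-u_0)$ against the resulting upper bound (which is essentially supported on $\{u_0 = 0\}$) gives $\Leb(\{u_0=0\}\cap B_{\rho/2}(X_0)) \geq c\rho^N$.

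Once both densities are available, the relative isoperimetric inequality applied to the finite-perimeter set $E$ gives
\[
c\rho^{N-1} \;\leq\; \min\bigl(\Leb(E\cap B_\rho(X_0)),\,\Leb(B_\rho(X_0)\setminus E)\bigr)^{(N-1)/N} \;\leq\; C_N\,\mathcal{H}^{N-1}(\redbdry E\cap B_\rho(X_0)),
\]
establishing the lower bound. For the final equality, the two-sided density condition forces every $X \in \partial E \cap \Omega'$ to belong to the essential (measure-theoretic) boundary $\partial^* E$. De Giorgi's structure theorem then guarantees $\mathcal{H}^{N-1}(\partial^* E \setminus \redbdry E) = 0$, whence $\mathcal{H}^{N-1}(\partial E \setminus \redbdry E) = 0$ in $\Omega'$. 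The main obstacle in this plan will be the zero-phase density estimate: in the variational Alt--Caffarelli setting the replacement $v$ is admissible via direct energy comparison, whereas here one must rely on the nonvariational minimality of $u_0$ (inherited from the minimal Perron solutions $u_\varepsilon$) to justify the comparison and control $v - u_0$ against the Lipschitz norm of $u_0^{1/\alpha}$.
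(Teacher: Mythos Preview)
Your upper bound and the final structure-theorem deduction are fine and match the paper. The genuine divergence is in the lower bound: you propose the relative isoperimetric inequality, which requires positive density of \emph{both} phases, while the paper bypasses two-sided density entirely. Under the structural assumption \eqref{hip}, the paper shows that $u_0^{1/\alpha}$ is subharmonic with respect to the linear operator $\mathrm{L}v := f_{ij}D_{ij}v$ in $\{u_0>0\}$ near the free boundary. It then integrates by parts (Gauss--Green) against a barrier $\Phi$ solving $\mathrm{L}\Phi = -\psi$ in $B_1$ with zero boundary data: the interior term is bounded below by $\int_{B_{1/5}} v_0^{1/\alpha}\,dx \gtrsim 1$ via nondegeneracy, while the only surviving boundary term is controlled by $\mathcal{H}^{N-1}(\redbdry\{v_0>0\}\cap B_1)$. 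This gives the lower bound directly, without ever establishing $\Leb(\{u_0=0\}\cap B_\rho) \gtrsim \rho^N$.

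The gap in your plan is precisely the zero-phase density, which you correctly identify as the obstacle but do not resolve. Your harmonic-replacement sketch does not close: the inequality $u_0 \leq C d_0^\alpha$ bounds $u_0$, not $v-u_0$, and there is no evident mechanism that forces $v-u_0$ to be small on $\{u_0>0\}$ (indeed $v$ is insensitive to where $u_0$ vanishes). In the variational setting the replacement works because $v$ is an admissible competitor in an energy functional; here Perron minimality of $u_\varepsilon$ is relative to supersolutions of \eqref{Ee}, and an $F$-harmonic function $v$ is not such a supersolution when $\beta_\varepsilon(v)>0$, so the minimality does not give you $u_0 \leq v$ in any useful quantitative form beyond the comparison you already used. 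Without zero-phase density the isoperimetric route stalls. The paper's approach buys exactly this: by linearizing via \eqref{hip} it trades the need for two-sided density for a divergence-structure identity, at the cost of the extra structural hypothesis on $F$.
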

\begin{proof}
The estimate from above follows from Theorem $\ref{L1}$. It remains to verify the estimate by below.  Fixed $X_0$, let us define the normalized function $v_0 \colon B_1 \to \mathbb{R}$ by
$$
	v_0(X):=\frac{ u_0(X_0-\rho X)}{\rho^{\alpha}}.
$$
Arguing as in the proof of Theorem \eqref{hausdorff1}, for $\rho$ universally small, we conclude,
\begin{equation}\label{subharmonic red FB}
	\text{L}(v_0^{\frac{1}{\alpha}}):=\rho^{2-\alpha}\sum_{ij}f_{ij}D_{ij}(u_0^{\frac{1}{\alpha}})\geq 0 \quad \mbox{in} \quad \{v_0>0\}\cap B_1.
\end{equation}
Our next step is to furnish an appropriate special barrier. Let $\psi$ be a nonnegative smooth function in $B_1$, with $\psi \equiv 1$ in $B_{1/5}$ and $\psi \equiv 0$ outside $B_{1/4}$. Let  $\Phi$ be the solution to the following boundary value problem
$$
 \left \{
        \begin{array}{ccccc}
            \mathrm{L} \Phi & =& - \psi &\text{ in }& B_{1}\\
            \Phi &=& 0 &\text{ on }& \partial B_{1}.
        \end{array}
    \right.
$$
From classical elliptic regularity theory, $\Phi$ is smooth and, in particular, for any $0 < \alpha < 1$, 
\begin{equation} \label{TM05}
    \| \Phi  \|_{C^\alpha(B_{1/2})} \le C_1,
\end{equation}
by a universal constant $C_1>0$. Also by maximum principle $\Phi > 0$ in $B_1$ and by Hopf maximum principle,
\begin{equation} \label{TM05.5}
    f_{ij} \partial_{i}\Phi \nu_{j} \ge C_2 > 0, \quad \text{along } \partial B_1,
\end{equation}
where $\nu_{j}$ is the $j$-th coordinate of the outward normal vector to $\partial B_1(0)$. Applying generalized Gauss-Green formula, we derive
\begin{equation} \label{TM06}
    \begin{array}{lll}
        \displaystyle \int\limits_{\{v_0 > 0 \} \cap B_{1}}
        \left \{ \Phi \mathrm{L} (v_{0}^{\frac{1}{\alpha}})- v_{0}^{\frac{1}{\alpha}}\mathrm{L} \Phi \right \}
        dx &=& \displaystyle \int\limits_{\redbdry\{v_0 > 0 \}\cap B_{1}} \left \{ \Phi f_{ij}
        \partial_{i}(v_{0}^{\frac{1}{\alpha}}) - v_{0}^{\frac{1}{\alpha}}f_{ij} \partial_{i}\Phi \right \}
        \eta_{j}d\mathcal{H}^{N-1} \\
        &-& \displaystyle \int\limits_{\{v_0 > 0 \}\cap \partial
        B_{1}} v_{0}^{\frac{1}{\alpha}}f_{ij} \partial_{i}\Phi \nu_{j}d \mathcal{H}^{N-1}.
    \end{array}
\end{equation}

Since $\Phi \mathrm{L} (v_{0}^{\frac{1}{\alpha}}) \geq 0$, there holds
\begin{equation} \label{TM07}
    \int\limits_{\{v_0 > 0 \} \cap B_{1}}
    \left \{ \Phi \mathrm{L} (v_{0}^{\frac{1}{\alpha}})- v_{0}^{\frac{1}{\alpha}}\mathrm{L} \Phi \right \}
    dx
    \geq  \int\limits_{B_{1}} \psi v_{0}^{\frac{1}{\alpha}} dx \ge \int\limits_{ B_{1/5}} v_{0}^{\frac{1}{\alpha}}dx.
\end{equation}
Also from uniform gradient bounds of $v_{0}$, ellipticity and \eqref{TM05} we estimate
\begin{equation} \label{TM08}
    \left|\;\int\limits_{\redbdry\{v_0>0\} \cap
    B_{1}} \Phi f_{ij} \partial_{i}(v_{0}
^{\frac{1}{\alpha}})    \eta_{j}d\mathcal{H}^{N-1}\right| \leq C_1  \mathcal{H}^{N-1}(\redbdry\{v_0>0\}
    \cap B_{1}).
\end{equation}
In addition, clearly,
\begin{equation} \label{TM09}
    \int_{\redbdry\{v_0>0\} \cap B_{1}} v_{0}^{\frac{1}{\alpha}}
    f_{ij}\partial_{i}\Phi \eta_{j} \, d\mathcal{H}^{N-1}= 0,
\end{equation}
and by \eqref{TM05.5},
\begin{equation} \label{TM10}
    \displaystyle  \int_{\{v_0>0\}\cap \partial B_{1}} v_{0}^{\frac{1}{\alpha}}f_{ij}\partial_{i}\Phi \nu_{j}  d \mathcal{H}^{N-1}\ge 0.
\end{equation}
Combining  \eqref{TM06}, \eqref{TM07}, \eqref{TM08} and \eqref{TM09},  we deduce
\begin{equation} \label{TM11}
    \int_{B_{1/5}} v_0^{\frac{1}{\alpha}} dx \leq C_1  \mathcal{H}^{N-1}(\redbdry \{v_0>0\}
    \cap B_{1}).
\end{equation}
On the other hand, by non-degeneracy, as in proof of  Theorem \ref{int col}, there holds
\begin{equation} \label{TM12}
    \intav{B_{1/5}(0)} v_0^{\frac{1}{\alpha}} dx \ge C_3,
\end{equation}
for a positive universal constant $C_3$. Finally from \eqref{TM11} and \eqref{TM12} we conclude
$$
\mathcal{H}^{N-1}(\redbdry\{v_0>0\} \cap B_{1}) \geq c_0,
$$
for a universal constant $c_0$ and the estimate by below in proven. The total measure of the reduced free boundary follows now by classical considerations.
\end{proof}

\section*{Acknowledgments} This paper is part of the first author's PhD thesis conducted in the Department of Mathematics at Universidade Federal do Cear\'a, Brazil. Both authors would like to express their gratitude to this institution for fostering such a enjoyable and productive scientific atmosphere. The authors would like to thank the anonymous referee for a careful and thoughtful revision which greatly improved the final outcome of this work. The authors also thank Cyril Imbert for a friendly and elucidating discussion  on the heuristics of Ishii-Lions method. This work has been partially supported by CNPq-Brazil and Capes-Brazil.

\bibliographystyle{amsplain, amsalpha}

\vspace{1cm}

\noindent \textsc{Eduardo V. Teixeira} \hfill \textsc{Dami\~ao Ara\'ujo}\\
Universidade Federal do Cear\'a \hfill  Universidade Federal do Cear\'a \\
Campus of Pici - Bloco 914 \hfill Campus of Pici - Bloco 914 \\
Fortaleza - Cear\'a - Brazil  \hfill Fortaleza - Cear\'a - Brazil  \\
60.455-760 \hfill 60.455-760 \\
\texttt{teixeira@mat.ufc.br} \hfill \texttt{djunio@mat.ufc.com}

%%%%%%%%%%%%%%%%%%%%%%%%%%%%%%%%%%%%%%%%%%%%%%%%%%%%
%% END OF DOCUMENT
%%%%%%%%%%%%%%%%%%%%%%%%%%%%%%%%%%%%%%%%%%%%%%%%%

\end{document}